\def\CB{\color{black}}
\numberwithin{equation}{section}
\theoremstyle{plain}
\newtheorem{theorem}{Theorem}[section]
\newtheorem*{theorem*}{Theorem}
\newtheorem{lemma}[theorem]{Lemma}
\newtheorem{corollary}[theorem]{Corollary}
\newtheorem{proposition}[theorem]{Proposition}
\theoremstyle{remark}
\newtheorem{remark}[theorem]{Remark}
\numberwithin{equation}{section}
\theoremstyle{definition}
\newtheorem{definition}[theorem]{Definition}
\numberwithin{equation}{section}
\def\cR{{\mathcal R}}
\newcommand{\bbR}{{\mathbb{R}}}
\newcommand{\CC}{\mathbb{C}}
\newcommand{\di}{\,{\rm{d}}}
\newcommand{\R}{\mathbb R}
\begin{document}

\title{Sobolev algebras on nonunimodular Lie groups}

\subjclass[2010]{46E35, 22E30, 43A15}

\author[M. M. Peloso]{Marco M. Peloso}
\address{
Dipartimento di Matematica, 
Universit\`a degli Studi di Milano, 
Via C. Saldini 50,  
20133 Milano, Italy}
\email{marco.peloso@unimi.it}

\author[M. Vallarino]{Maria Vallarino}
\address{Dipartimento di Scienze Matematiche ``Giuseppe Luigi Lagrange", Dipartimento di Eccellenza 2018-2022, 
  Politecnico di Torino, Corso Duca degli Abruzzi 24, 10129 Torino,
  Italy} 
\email{maria.vallarino@polito.it}

\keywords{Sobolev spaces, Lie groups, Riesz transforms}

\thanks{Both authors are  partially supported by the grants   PRIN
2010-11 and 
  2015 {\em Real and Complex Manifolds: Geometry, Topology and
    Harmonic Analysis}. Both authors are members of the 
Gruppo Nazionale per l'Analisi Matematica, la Probabilit\`a e le loro Applicazioni
(GNAMPA) of the Istituto Nazionale di Alta Matematica (INdAM)}
 
\begin{abstract}
Let $G$ be a noncompact connected Lie group and $\rho$ be the right Haar measure of $G$.
Let ${\bf{X}}=\{X_1,\dots,X_q\}$ be a family of left invariant
vector fields which satisfy H\"ormander's condition, and let
 $\Delta=-\sum_{i=1}^qX_i^2$ be the corresponding
 subLaplacian.
 For $1\leq
p<\infty$ and $\alpha\geq 0$ we define the Sobolev space
$$
L^p_{\alpha}(G)=\{f\in L^p(\rho): \Delta^{\alpha/2}f\in L^p(\rho)\}\,,
$$
endowed with the norm
$$
\|f\|_{\alpha,p}=\|f\|_{p}+\|\Delta^{\alpha/2}f\|_p\,,
$$ 
where we denote by $\|f\|_p$ the norm of $f$ in
$L^p(\rho)$.  

In this paper  we show that for all $\alpha\geq 0$ and $p\in
(1,\infty)$, the space 
$L^{\infty}\cap L^p_{\alpha}(G)$ is an algebra under pointwise
product, that is, 
there exists a positive constant
  $C_{\alpha,
  p}$ such that for all $f,g\in L^{\infty}\cap L^p_{\alpha}(G)$, $fg\in
L^{\infty}\cap L^p_{\alpha}(G)$ and 
$$
\|fg\|_{\alpha,p}\leq C_{\alpha,p}
\big(\|f\|_{\alpha,p}\|g\|_{\infty}+\|f\|_{\infty}\|g\|_{\alpha,p}\big)\,.
$$

Such estimates  were proved by T. Coulhon, E. Russ and
V. Tardivel-Nachef 
in the case
when $G$ is unimodular. We shall prove it on
Lie groups, thus extending their result to the  nonunimodular case. 

In order to prove our main result, 
we need to study the boundedness of local Riesz transforms  
$R^c_J=X_J(cI+\Delta)^{-m/2}$, where $c>0$, $X_J=X_{j_1}\dots X_{j_m}$ and
$j_\ell \in\{1,\dots,q\}$ for $\ell=1,\dots,m$.  
 We show that if $c$ is 
sufficiently large,   the Riesz
transform $R^c_J$ is bounded on $L^p(\rho)$ for every $p\in (1,\infty)$, and prove
also appropriate endpoint results involving Hardy and BMO spaces.
\end{abstract}

\maketitle 

\section{Introduction and statement of the main results}

Let $G$ be a noncompact connected Lie group. We shall denote by
$\lambda$ and $\rho$ the left and right Haar measures of $G$,
respectively, and by $\delta$ the modular function,
i.e. $\delta=\frac{\di\lambda}{\di\rho}$. For every $p\in [1,\infty]$
and $f\in L^p(\rho)$ we shall denote by $\|f\|_p$ the norm of $f$ in
$L^p(\rho)$.

Let ${\bf{X}}=\{X_1,\dots,X_q\}$ be a family of left invariant
vector fields which satisfy H\"ormander's condition and
consider the subLaplacian $\Delta=-\sum_{i=1}^qX_i^2$. For every
$ p\in (1,\infty)$, let $\Delta_p$ be the smallest closed extension of
$\Delta |_{C^{\infty}_c(G)}$ to $L^p(\rho)$. For every $\alpha>0$
one may define the operator $\Delta^{\alpha}_p$ on $L^p(\rho)$ which
we shall always denote by $\Delta^{\alpha}$, see
e.g. \cite{K1}. For every $p\in (1,\infty)$ and $\alpha\geq 0$ we define the Sobolev space
$$
L^p_\alpha(G)
=\big\{f\in L^p(\rho): \Delta^{\alpha/2}f\in L^p(\rho)\big\}\,,
$$
endowed with the norm
\begin{equation}\label{Sobolev-norm}
\|f\|_{\alpha,p}=\|f\|_{p}+\|\Delta^{\alpha/2}f\|_p\,. \medskip
\end{equation}

Throughout 
the  paper we will often denote by $L^p$ the space
$L^p(\rho)$, and when we refer to $L^p$-integrability, we will always 
mean integrability with respect to the right Haar measure $\di \rho$. 
Moreover, if the underlying group $G$ is understood from the context,
we will often write $L^p_\alpha$ in place of $L^p_\alpha(G)$.

Our aim is to prove the following result. 
\begin{theorem}\label{t: productlemma}
Let $G$ be a noncompact connected Lie group. For all $\alpha\geq 0$ and $p\in (1,\infty)$ the space
$L^p_{\alpha}\cap L^{\infty}$ is an algebra under pointwise
product. More precisely, there exists a positive constant  $C_{\alpha,
  p}$ such that for all $f,g\in L^p_{\alpha}\cap L^{\infty}$, we have $fg\in
L^p_{\alpha}\cap L^{\infty}$ and 
$$
\|fg\|_{\alpha,p}\leq C_{\alpha,p}
\big(\|f\|_{\alpha,p}\|g\|_{\infty}+\|f\|_{\infty}\|g\|_{\alpha,p}\big)\,.
$$
\end{theorem}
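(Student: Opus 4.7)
The plan is to adapt the Coulhon--Russ--Tardivel-Nachef argument to the nonunimodular setting by systematically replacing the ordinary Riesz transforms $X_J\Delta^{-|J|/2}$, which need not be $L^p(\rho)$-bounded when $G$ is nonunimodular, by the shifted local Riesz transforms $R^c_J=X_J(cI+\Delta)^{-|J|/2}$ introduced in the abstract. Throughout I take $c>0$ large enough that each $R^c_J$ is bounded on $L^p(\rho)$ for every $p\in(1,\infty)$; this is the central technical input supplied elsewhere in the paper.

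First I would handle the integer case $\alpha=m\in\NN$. The key preliminary is the norm equivalence
$$
\|f\|_{m,p} \;\simeq\; \|f\|_p+\sum_{|J|\leq m}\|X_J f\|_p\,,
$$
obtained as follows: writing $X_J=R^c_J\,(cI+\Delta)^{|J|/2}$ and exploiting a functional calculus / \MH multiplier bound that compares $(cI+\Delta)^{m/2}$ with $\Delta^{m/2}+I$ yields the ``$\lesssim$'' direction; the reverse is proved by expanding $(cI+\Delta)^{m/2}$ through an analogous multiplier identity and again applying $L^p$ bounds for shifted multipliers. With this in hand, I expand $X_J(fg)$ via a non-commutative Leibniz rule as a linear combination of terms $(X_K f)(X_L g)$ with $|K|+|L|\leq m$, apply H\"older's inequality with conjugate exponents $pm/|K|$ and $pm/|L|$, and invoke a Gagliardo--Nirenberg type inequality
$$
\|X_K f\|_{pm/|K|}\leq C\,\|f\|_{m,p}^{|K|/m}\|f\|_{\infty}^{1-|K|/m}\,.
$$
A final use of Young's inequality with $|K|/m+|L|/m=1$ then collapses the estimate to the desired form $\|f\|_{m,p}\|g\|_{\infty}+\|f\|_{\infty}\|g\|_{m,p}$.

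For non-integer $\alpha$ I would pass from the integer case by bilinear complex interpolation on the pointwise product map $(f,g)\mapsto fg$ between the spaces $L^p_{m_1}\cap L^{\infty}$ and $L^p_{m_2}\cap L^{\infty}$ with $m_1,m_2\in\NN$ bracketing $\alpha$; alternatively, one can use a Littlewood--Paley / Bony paraproduct decomposition built from the spectral resolution of $cI+\Delta$. The main obstacle, in either route, is the Gagliardo--Nirenberg inequality in the nonunimodular setting: the classical proofs rely on heat-kernel bounds that behave differently for nonunimodular $G$, so one must derive it from the functional calculus of the shifted operator $cI+\Delta$ and the $L^p(\rho)$-boundedness of the $R^c_J$. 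Once this inequality and the integer-case norm equivalence are in hand, everything else is a routine combination of Sobolev embeddings, H\"older, and Young inequalities.
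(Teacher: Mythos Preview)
Your outline for integer $\alpha$ is close in spirit to the paper's inductive step, and the norm equivalence you invoke is indeed Corollary~\ref{sobolevspaces}(i). The real difficulty, which you correctly flag, is the Gagliardo--Nirenberg inequality with an $L^\infty$ endpoint,
\[
\|X_K f\|_{pm/|K|}\lesssim \|f\|_{m,p}^{|K|/m}\,\|f\|_\infty^{1-|K|/m}\,.
\]
But your proposed derivation---``functional calculus of $cI+\Delta$ plus $L^p$-boundedness of the $R^c_J$''---is not enough. In the paper this inequality is Proposition~\ref{int} with $q=\infty$, and its proof for the $L^\infty$ endpoint goes through a complex-interpolation argument that requires first knowing $\||f|\|_{\alpha,p}\lesssim\|f\|_{\alpha,p}$ for $\alpha\in(0,1)$ (Lemma~\ref{normamodulof}); that lemma in turn rests on the Strichartz-type characterisation $\|f\|_{\alpha,p}\approx\|S^{\rm loc}_\alpha f\|_p+\|f\|_p$ (Theorem~\ref{Salphaloc}(i)). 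None of this follows from Riesz-transform bounds or spectral multipliers alone; the $S^{\rm loc}_\alpha$ machinery is the genuine new ingredient in the nonunimodular case, and its proof is where the modular function is controlled via~\eqref{intdelta12}.

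Your treatment of non-integer $\alpha$ by bilinear complex interpolation is also a gap. The target inequality has the form $\|fg\|_{\alpha,p}\lesssim\|f\|_{\alpha,p}\|g\|_\infty+\|f\|_\infty\|g\|_{\alpha,p}$, a sum of two products rather than a single bilinear bound, so it does not fit the standard bilinear interpolation theorems; and the identification of the complex interpolation spaces of $L^p_m\cap L^\infty$ would itself need proof in this setting. The paper avoids all of this: it first proves the case $\alpha\in(0,1)$ \emph{directly} from the $S^{\rm loc}_\alpha$ characterisation (splitting $f(xy^{-1})g(xy^{-1})-f(x)g(x)$ and applying H\"older together with local maximal functions), and then inducts from $\alpha$ to $\alpha+1$ via Corollary~\ref{sobolevspaces}(ii), Leibniz, and Proposition~\ref{int}. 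Thus the $S^{\rm loc}_\alpha$ characterisation does double duty---it supplies both the base case $\alpha\in(0,1)$ and, through Lemma~\ref{normamodulof}, the $L^\infty$-endpoint Gagliardo--Nirenberg inequality needed in the induction---and your proposal has no substitute for it.
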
 
Theorem \ref{t: productlemma} will be obtained as a particular case of
the following more general theorem. 
\begin{theorem}\label{t: productlemma2}
Let $G$ be a noncompact connected Lie group. Let $\alpha\geq 0$,
$p_1,q_2\in (1,\infty]$ and $r,p_2,q_1\in 
(1,\infty)$ such that $\frac1r=\frac{1}{p_i}+\frac{1}{q_i}$, $i=1,2$. 
There exists a positive constant $C$ such that for all $f\in
L^{p_1}(\rho)\cap L^{p_2}_{\alpha}$ 
and $g\in L^{q_2}(\rho)\cap L^{q_1}_{\alpha}$, we have $fg\in   L^r_{\alpha}$ and
$$
\|fg\|_{\alpha,r}\leq C 
\big(\|f\|_{p_1}\|g\|_{\alpha,q_1}+\|f\|_{\alpha, p_2}\|g\|_{q_2}\big)\,.
$$
\end{theorem}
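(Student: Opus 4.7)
The plan is to proceed in three stages: establish an equivalent norm for integer-order Sobolev spaces, prove the estimate for integer $\alpha$ via the Leibniz rule, and interpolate to fractional $\alpha$. The crucial ingredient is the local Riesz transform boundedness announced in the excerpt: for $c>0$ sufficiently large, $R^c_J = X_J(cI+\Delta)^{-m/2}$ is bounded on $L^p(\rho)$ for every $p \in (1,\infty)$. Combined with the functional calculus relating $\Delta^{m/2}$ and $(cI+\Delta)^{m/2}$ modulo a bounded perturbation, this yields, for every integer $m \geq 1$, the equivalence
$$
\|h\|_{m,p} \asymp \|h\|_p + \sum_{|J|=m}\|X_J h\|_p.
$$
Thus, for integer $\alpha = m$, bounding $\|fg\|_{m,r}$ reduces to controlling $\|fg\|_r$ (trivial by Hölder) together with $\|X_J(fg)\|_r$ for each multi-index $J$ of length $m$.

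For integer $m$, I would apply the Leibniz rule for left-invariant vector fields:
$$
X_J(fg) = \sum_{J_1 \sqcup J_2 = J} c_{J_1,J_2}\,(X_{J_1}f)(X_{J_2}g).
$$
The two extreme terms $(J_1,J_2) = (J,\emptyset)$ and $(\emptyset,J)$ yield, by Hölder with exponents $(p_2,q_2)$ and $(p_1,q_1)$ respectively, the desired contributions $\|X_J f\|_{p_2}\|g\|_{q_2}$ and $\|f\|_{p_1}\|X_J g\|_{q_1}$. The mixed terms, with $0 < |J_1|, |J_2| < m$, require Gagliardo--Nirenberg-type inequalities of the form
$$
\|X_{J_1}f\|_{s_1} \lesssim \|f\|_{p_1}^{1-\theta_1}\|f\|_{m,p_2}^{\theta_1}, \qquad \theta_1 = |J_1|/m,
$$
with $s_1,s_2$ chosen so that $1/s_1 + 1/s_2 = 1/r$ and analogously for $g$; Hölder then closes the estimate. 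These interpolation inequalities can be derived in the nonunimodular setting from a heat semigroup or Littlewood--Paley decomposition, combined again with the local Riesz transform bounds.

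For non-integer $\alpha$, I would invoke Stein's complex interpolation for analytic families of operators, applied to the bilinear map $T_z(f,g) = (cI+\Delta)^{z/2}(fg)$ in a strip $\{0 \leq \mathrm{Re}\,z \leq M\}$ for an integer $M$ with $M > \alpha$. The integer-order estimate above controls $T_z$ on $\mathrm{Re}\,z = M$, while on $\mathrm{Re}\,z = 0$ Hölder plus the boundedness of imaginary powers of $cI+\Delta$ on $L^r(\rho)$ provides the required bound; interpolation at $\mathrm{Re}\,z = \alpha$ then produces the theorem. The main obstacle is twofold: \emph{(i)} establishing the Gagliardo--Nirenberg inequalities on a nonunimodular Lie group, where classical scaling arguments are unavailable and one must instead work through the spectral calculus and Riesz transform estimates; and \emph{(ii)} controlling the growth in $\mathrm{Im}\,z$ of the operator norm of $T_z$ on the two vertical sides of the strip, which requires sharp bounds on imaginary powers of the shifted subLaplacian. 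Once these technical ingredients are in place, Theorem \ref{t: productlemma2} (and hence Theorem \ref{t: productlemma}) follows from a standard Leibniz--Hölder--interpolation scheme.
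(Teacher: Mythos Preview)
Your integer-order step via the Leibniz rule and Gagliardo--Nirenberg is sound and matches the paper's inductive mechanism (the paper iterates $\alpha\to\alpha+1$ using Corollary~\ref{sobolevspaces}(ii) rather than expanding $X_J$ all at once, but the content is the same, and Proposition~\ref{int} is precisely the interpolation inequality you invoke). The genuine gap is in your final step: the fractional case cannot be recovered from the integer endpoints by bilinear Stein interpolation in the way you sketch. The Kato--Ponce inequality has a \emph{two-term} right-hand side, and neither summand alone controls $\|fg\|_{M,r}$ at the integer endpoint (take $f$ nearly constant to kill $\|f\|_{M,p_2}\|g\|_{q_2}$, and symmetrically for the other term). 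Standard bilinear interpolation for analytic families requires a single product-type bound $\|T_z(f,g)\|\lesssim C(\operatorname{Im}z)\|f\|_{X_j}\|g\|_{Y_j}$ at each edge of the strip; here no choice of $(X_1,Y_1)$ at $\operatorname{Re}z=M$ captures the integer estimate, so interpolation at best yields something like $\|fg\|_{\alpha,r}\lesssim \|f\|_{\alpha,p_2}\|g\|_{\alpha,q_1}$, which is strictly weaker and in particular does not recover Theorem~\ref{t: productlemma}.

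This is exactly why the paper does \emph{not} pass through the integers first. It proves the range $\alpha\in(0,1)$ directly, using the norm characterization of Theorem~\ref{Salphaloc}(i): the pointwise splitting $S^{\rm loc}_\alpha(fg)(x)\le |f(x)|\,S^{\rm loc}_\alpha g(x)+I(x)$, with $I$ controlled by $(M^1|g|^p)^{1/p}\,G^{8}_{\alpha,q}f$, is what produces the two-term structure for fractional $\alpha$. Only afterwards does the paper run the Leibniz/Gagliardo--Nirenberg induction $\alpha\to\alpha+1$ to reach all $\alpha\ge1$. Your scheme is missing a replacement for this fractional base case; the obstacles you list (Gagliardo--Nirenberg in the nonunimodular setting, growth of imaginary powers) are real but secondary to the structural failure of the interpolation setup.
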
 

Given a Laplacian or a subLaplacian on a Lie group, the question of finding
under which conditions the corresponding Sobolev spaces form an algebra,
has
 a long history.  It was first proved by R. Strichartz \cite{Strichartz-multi}
in the case of the Laplacian in $\bbR^n$ that the Sobolev spaces
$L^p_\alpha(\bbR^n)$ form an algebra when $\alpha p>n$.  Such result
was later extended 
by G. Bohnke in the case
of a nilpotent Lie group  $G$  \cite{B}, under the condition   $\alpha p>Q$,
where $Q$ denotes the homogeneous dimension of $G$.  This result was
also proved by T. Coulhon, E. Russ and V. Tardivel-Nachev \cite{CRTN}
on any unimodular Lie group $G$ when $\alpha p>d$, where $d$ denotes
the local dimension of $G$; see
\eqref{pallepiccole}. 

Later, T. Kato and G. Ponce
 \cite{KaTo} 
proved Theorem \ref{t: productlemma} in the case of the Laplacian in
$\bbR^n$, which is more general than 
Strichartz's result since 
it does not rely on the Sobolev embedding.
Incidentally, the same authors showed that
the  algebra property of the Sobolev spaces is
 fundamental in the theory of well-posedness of Cauchy problems for
 certain nonlinear differential equations.

More recently, the Sobolev algebra problem was studied for Laplacians,
subLaplacians 
and even more general differential operators satisfying suitable
assumptions on various Lie groups and Riemannian manifolds
\cite{BBR,BCF,CRTN,GS}. In particular, 
Theorems \ref{t: productlemma} and \ref{t:
  productlemma2} 
were proved in \cite{CRTN} in the case
when $G$ is unimodular.  

As already mentioned, in this paper we prove Theorems \ref{t: productlemma} and \ref{t:
  productlemma2} in the case of a subLaplacian on any nonunimodular Lie group.  The
 situation on a nonunimodular Lie group is considerably more
 complicated than in the unimodular case. Indeed we prove that in
 general when $G$ is nonunimodular,  
 $L^p_\alpha(G)$ is not an algebra, even when $\alpha p>d$, see
 Theorem \ref{contro-esempio}.  
 Incidentally, the same counterexample shows
 that the space $L^p_1(G)$ does not embed in $L^\infty$ when $p>d$;
 see also Remark \ref{contro-eempio-rem}. 
Furthermore, 
since $\delta$ is not trivial, we have to deal
with some technical difficulties: $\delta$ obviously appears when we
make some change of variables in the integrals, and the factor
$\delta^{1/2}$ naturally arises in the estimates of the heat kernel
associated with $\Delta$ and its derivatives (see Subsection \ref{pt}
below). Very often we shall work on balls of small radius where the
modular function is comparable with its value at the center of the
ball; but sometimes we also have to deal with the behavior of the
modular function on balls of arbitrary radius. Let us mention that
property \eqref{intdelta12} below, which gives a control of the
integral of $\delta^{1/2}$ on balls of any radius, is crucial in the
proof of our results.   
 
The main ingredient in the proof of Theorem \ref{t: productlemma}, is
a characterization of the Sobolev norm \eqref{Sobolev-norm} in 
terms of the $p$-integrability property of averages of differences of
a function on small balls.  We consider the local versions  of 
functionals introduced by Strichartz \cite{Strichartz-multi}, and
 E. M.  
Stein \cite{Stein-singular-integrals}, respectively; see
also \cite{CRTN} for these local versions in the unimodular case. 
To be more precise, for  a locally integrable function 
$f$ and every $\alpha\in (0,1)$ 
we set
\begin{equation}\label{S-alpha-loc-def}
S^{{\rm loc}}_{\alpha}f(x)
=\Big(\int_0^1\Big[ \frac{1}{u^{\alpha}
  V(u)}\int_{|y|<u}|f(xy^{-1})-f(x)|\di\rho(y) \Big]^2\frac{\di u}{u}
\Big)^{1/2}\,,
\end{equation}
and
\begin{equation}\label{D-alpha-loc-def}
D^{{\rm loc}}_{\alpha}f(x)=\Big( \int_{|y|<1}      \frac{
  |f(xy^{-1})-f(x)|^2  }{|y|^{2\alpha} V(|y|)}     \di\rho (y)
\Big)^{1/2}\,. 
\end{equation}
For $r>0$, we denote by $V(r)$ the volume of the ball centered at
the origin $e$ of $G$, with respect to the right Haar measure $\rho$;
see \eqref{Vr}. Then, we prove the following result. 
\begin{theorem}\label{Salphaloc} 
Let $G$ be a noncompact connected Lie group and let $\alpha\in (0,1)$. 
 Then the  following properties hold: 
\begin{itemize}
\item[(i)]   for any $p\in (1,\infty)$  there exists a positive constant $C$
  such that
 $$
 C^{-1}\,\|f\|_{\alpha,p}\le    \|S^{{\rm loc}}_{\alpha}f\|_p+\|f\|_p
 \le C \|f\|_{\alpha,p}\,; \smallskip
 $$
 \item [(ii)]  for any  $p>2d/(d+2\alpha)$ there exists a positive constant $C$
  such that
$$
 C^{-1}\,\|f\|_{\alpha,p}\le    \|D^{{\rm loc}}_{\alpha}f\|_p+\|f\|_p   \le C \|f\|_{\alpha,p}\,.
 $$
 \end{itemize}
 \end{theorem}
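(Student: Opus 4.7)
The plan is to relate the local functionals $S^{\rm loc}_\alpha f$ and $D^{\rm loc}_\alpha f$ to a vertical Littlewood--Paley square function built from the heat semigroup $\{e^{-t\Delta}\}_{t>0}$, then localize. First introduce the global versions $S_\alpha f$ and $D_\alpha f$, obtained from \eqref{S-alpha-loc-def}--\eqref{D-alpha-loc-def} by extending the integration in $u$ (respectively $y$) to $(0,\infty)$ (respectively $G$), together with
$$
g_\alpha f(x)=\Bigl(\int_0^\infty \bigl|(t\Delta)^{\alpha/2}e^{-t\Delta}f(x)\bigr|^2\,\frac{dt}{t}\Bigr)^{1/2}.
$$
By Littlewood--Paley--Stein theory applied to $e^{-t\Delta}$ on $L^p(\rho)$, one has $\|g_\alpha f\|_p\sim \|\Delta^{\alpha/2}f\|_p$ for every $p\in(1,\infty)$. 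This equivalence needs care in the nonunimodular setting, but follows from known spectral multiplier bounds for $\Delta$ combined with the $L^p(\rho)$-contractivity of the heat semigroup.

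Second, I would prove $\|S_\alpha f\|_p\sim\|g_\alpha f\|_p$ for all $p\in(1,\infty)$ and $\|D_\alpha f\|_p\sim\|g_\alpha f\|_p$ for $p>2d/(d+2\alpha)$. The key is the pointwise comparison of $e^{-u^2\Delta}f(x)-f(x)$ with the average $V(u)^{-1}\int_{|y|<u}|f(xy^{-1})-f(x)|\,d\rho(y)$. Using the Gaussian bound
$$
h_t(y)\le \frac{C}{V(\sqrt t)}\,\delta^{1/2}(y)\,e^{-c|y|^2/t},
$$
and the analogous gradient estimate for $X_j h_t$, together with the crucial integrability property \eqref{intdelta12} that controls $\int_{B(e,r)}\delta^{1/2}\,d\rho$ uniformly, one obtains pointwise inequalities between these two expressions up to tail errors governed by the local Hardy--Littlewood maximal function $\mathcal{M}f(x)$. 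The reverse estimate reconstructs $(t\Delta)^{\alpha/2}e^{-t\Delta}f$ from weighted differences via a Calder\'on reproducing formula for $\Delta^{\alpha/2}$. For $D_\alpha$, the analogous comparison passes through a pointwise inequality of the form $D_\alpha f(x)\lesssim [\mathcal{M}(|g_\alpha f|^{q})(x)]^{1/q}$ for some $q<p$; the threshold $p>2d/(d+2\alpha)$ is precisely what allows such a bound to survive an application of the maximal inequality on $L^{p/q}(\rho)$, and arises from a weighted $L^2$ Hardy-type estimate combined with the local volume growth $V(u)\sim u^d$.

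Third, to localize, I would split each global functional as $S_\alpha f = S^{\rm loc}_\alpha f + T_\alpha f$, where $T_\alpha f$ collects the contribution from $u\ge 1$. Since $V(u)\ge V(1)$ for $u\ge 1$, this tail is dominated pointwise by
$$
\bigl(|f(x)|+\mathcal{M}f(x)\bigr)\Bigl(\int_1^\infty u^{-2\alpha-1}\,du\Bigr)^{1/2}\lesssim |f(x)|+\mathcal{M}f(x),
$$
with an analogous bound for the $|y|\ge 1$ piece of $D_\alpha f$. The $L^p$-boundedness of $\mathcal{M}$ on $L^p(\rho)$ then gives $\|T_\alpha f\|_p\lesssim \|f\|_p$, which combined with the global equivalences of the previous step yields both (i) and (ii).

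The main obstacle is the systematic appearance of $\delta^{1/2}$ in every heat kernel estimate, reflecting the failure of $\Delta$ to be symmetric on $L^2(\rho)$. Controlling $\delta^{1/2}$ uniformly on balls of arbitrary radius, which is precisely the content of \eqref{intdelta12}, is what replaces the simple doubling property of the Haar measure used in the unimodular argument of \cite{CRTN}; without it the passage between the averaged and semigroup descriptions of $\Delta^{\alpha/2}$ breaks down.
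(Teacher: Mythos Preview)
The localization step is where your plan breaks down. You want to first prove a \emph{global} equivalence $\|S_\alpha f\|_p\sim\|g_\alpha f\|_p$ and then control the tail $T_\alpha f$ (coming from $u\ge 1$) pointwise by $|f(x)|+\mathcal M f(x)$ with $\mathcal M$ the local maximal function. On a nonunimodular group this fails. After the change of variables $y\mapsto y^{-1}$, $z=xy$, one has
\[
\frac{1}{V(u)}\int_{|y|<u}|f(xy^{-1})|\,d\rho(y)
=\frac{1}{V(u)}\int_{B(x,u)}|f(z)|\,\delta(z)\,d\rho(z),
\]
and for large $u$ the ratio $\delta(z)/\delta(x)$ on $B(x,u)$ is not bounded---it grows like $e^{cu}$. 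No \emph{local} maximal function can control this quantity, and the \emph{global} Hardy--Littlewood maximal function on $(G,d_C,\rho)$ is not $L^p$-bounded when $G$ has exponential volume growth. So neither your tail bound $\|T_\alpha f\|_p\lesssim\|f\|_p$ nor the prior global equivalence $\|S_\alpha f\|_p\sim\|g_\alpha f\|_p$ is justified; the pointwise inequality you suggest simply does not hold.

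The paper avoids global functionals altogether. For the direction $\|\Delta^{\alpha/2}f\|_p\lesssim\|S^{\rm loc}_\alpha f\|_p+\|f\|_p$ it splits the square function $g_{1-\alpha/2}\Delta^{\alpha/2}f$ at $t=1$; the piece $t\ge 1$ is absorbed into $\|f\|_p$ via $g_1 f$, while for $t<1$ a dyadic decomposition in $|y|$ produces $S^{\rm loc}_\alpha f$ plus error terms $J_k,I_k$ living on the range $1\le u\le 2^{k+1}$. These errors are \emph{not} bounded pointwise by any maximal function; instead one uses \eqref{intdelta12} together with Minkowski's integral inequality to bound $\|I_k\|_p$ directly, the exponential factor $e^{Q2^k}$ from \eqref{intdelta12} being beaten by the Gaussian weight $e^{-c'2^{2k}}$ coming from the heat kernel. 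For the reverse direction the paper writes $f=(f-T_1f)+T_1f$; the piece $T_1f$ is smoothed so that $S^{\rm loc}_\alpha T_1f\lesssim T_{t_0}|f|$, and $f-T_1f=\sum_{m<0}f_m$ is handled by a genuine two-parameter argument comparing the dyadic scale $2^j$ of $S^{\rm loc}_\alpha$ with the time scale $2^m$ of $f_m$. These decompositions are precisely what substitute for your global-to-local passage, and they cannot be short-circuited by a maximal function bound. Your identification of \eqref{intdelta12} as the key new ingredient is correct, but it enters in this more delicate way, not through a pointwise tail estimate.
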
  

We point out that the norm equivalence (i) of Theorem \ref{Salphaloc} is the
main tool that we use to 
prove Theorem \ref{t: productlemma2} in the case when $\alpha\in
(0,1)$, while the norm equivalence (ii) provides a further
characterization of the Sobolev norm for certain values of $p$ and
$\alpha$. 
 In order to prove Theorem \ref{t: productlemma2} in the case $\alpha\in
[1,\infty)$,  
we need to prove the $L^p$-boundedness of the local Riesz
transforms.  
We consider the collection of multiindices 
$$
\{1,\dots,q\}^m = \big\{ J=(j_1,\dots,j_m):\, j_\ell\in
\{1,\dots,q\},\, \text{for\ }\ell=1,\dots,m \big\} \,.
$$
For every $c>0$ and $J\in\{1,\dots,q\}^m$, we shall denote by $R^{c}_J$
the local Riesz transform of order $m$
\begin{equation}\label{local-Riesz-trans-def}
R^c_J=X_J(cI+\Delta)^{-m/2} \,, 
\end{equation}
where, 
$X_J=X_{j_1} \dots X_{j_m}$.
 
Then, we prove the following
boundedness result for $R^c_J$, whose statement involves a Hardy type
space $\mathfrak{h}^1(\rho)$ and a space $\mathfrak{bmo}(\rho)$, whose
precise definition is given in Subsection \ref{h1-bmo-subsec}. 
\begin{theorem}\label{local-Riesz-trans-Lp}
Let $G$ be a noncompact connected Lie group. There 
 exists $c>0$ sufficiently large such that for every $J\in
 \{1,\dots,q\}^m$ and $m\in\mathbb N$, 
the local Riesz transform $R^c_J$ is bounded from the Hardy space
$\mathfrak{h}^1(\rho)$ to $L^1(\rho)$, from $L^{\infty}$ to
$\mathfrak{bmo}(\rho)$ and on $L^p(\rho)$ for every $p\in (1,\infty)$. 
\end{theorem}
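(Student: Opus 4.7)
The plan is to realize $R^c_J$ as a right-convolution operator on $(G,\rho)$ via subordination, split its kernel into a local piece handled by local \CZ theory and a global piece that is $L^1(\rho)$-integrable for $c$ large, and combine the two contributions. By the subordination formula
$$
(cI+\Delta)^{-m/2}=\frac{1}{\Gamma(m/2)}\int_0^\infty t^{m/2-1}\nep^{-ct}\,\nep^{-t\Delta}\wrt t\,,
$$
$R^c_J$ is right convolution with the kernel
$$
K^c_J=\frac{1}{\Gamma(m/2)}\int_0^\infty t^{m/2-1}\nep^{-ct}\,X_J h_t\wrt t\,,
$$
where $h_t$ denotes the heat kernel of $\Delta$. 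I would then pick a smooth cut-off $\chi$ equal to $1$ near the identity $e$ and supported in a fixed small neighborhood on which $\delta$ is comparable to $1$, and decompose $K^c_J=\chi K^c_J+(1-\chi)K^c_J=:K^{\mathrm{loc}}+K^{\mathrm{glob}}$.

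For the local piece $K^{\mathrm{loc}}$ the factor $\nep^{-ct}$ takes care of convergence at $t=\infty$, while standard Gaussian upper bounds for $h_t$ and for its left-invariant derivatives (whose $\delta^{1/2}$ factor is harmless on the support of $\chi$) produce size and smoothness estimates of the type $|K^{\mathrm{loc}}(y)|\lesssim V(|y|)^{-1}$ for $y$ close to $e$, together with a Hörmander-type gradient bound. Since $(G,\rho)$ is locally a space of homogeneous type for the control distance, a local \CZ theorem tuned to the definitions of $\mathfrak{h}^1(\rho)$ and $\mathfrak{bmo}(\rho)$ recalled in Subsection \ref{h1-bmo-subsec} then yields $L^p(\rho)$-boundedness of the convolution operator with kernel $K^{\mathrm{loc}}$ for every $p\in(1,\infty)$, together with the endpoint mappings $\mathfrak{h}^1(\rho)\to L^1(\rho)$ and $L^\infty\to\mathfrak{bmo}(\rho)$.

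For the global piece $K^{\mathrm{glob}}$ the aim is to show that if $c$ is sufficiently large then $K^{\mathrm{glob}}\in L^1(\rho)$, from which the $L^p(\rho)$-boundedness of the corresponding convolution operator for every $p\in[1,\infty]$ follows by Young's convolution inequality on $(G,\rho)$, and the two endpoint mappings then follow from the embeddings $\mathfrak{h}^1(\rho)\hookrightarrow L^1(\rho)$ and $L^\infty\hookrightarrow\mathfrak{bmo}(\rho)$. To establish $\|K^{\mathrm{glob}}\|_1<\infty$ one would use the pointwise bound $|X_J h_t(y)|\lesssim t^{-(d+m)/2}\delta(y)^{1/2}\exp(-|y|^2/(Ct))$ and integrate in $y$ by the layer-cake formula, the integrability control \eqref{intdelta12} on $\int_{B(e,R)}\delta^{1/2}\wrt\rho$ being essential. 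After performing the $y$-integral, the $t$-integrand has the form $t^{-N}\nep^{-ct}\nep^{At}$ with $A$ depending only on $G$ and $\mathbf{X}$, and is finite as soon as $c>A$.

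The main obstacle I anticipate is precisely this analysis of $K^{\mathrm{glob}}$: the factor $\delta^{1/2}$ that unavoidably enters the heat-kernel estimates on a nonunimodular group is not $\rho$-integrable on $G$, so controlling the tails of $K^{\mathrm{glob}}$ truly requires both the global estimate \eqref{intdelta12} on the growth of $\int_{B(e,R)}\delta^{1/2}\wrt\rho$ and a choice of $c$ large enough to dominate this exponential growth once one also accounts for the polynomial factor $t^{m/2-1}$. Moreover, the threshold for $c$ must depend only on $G$ and $\mathbf{X}$, uniformly in the multiindex $J$ and in $m$, as the statement demands; this requires a careful tracking of the dependence on $m$ of the constants in the derivative heat-kernel estimates.
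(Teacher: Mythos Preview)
Your overall architecture---subordination, then a decomposition into a compactly supported \CZ piece and an $L^1(\rho)$ tail---is exactly the paper's. The paper first splits the $t$-integral at $t=1$, shows the large-$t$ kernel $k^\infty$ lies in $L^1(\rho)$ for $c$ large, and then applies a spatial cut-off to the small-$t$ kernel $k^0$; the far part $(1-\phi)k^0$ is again in $L^1(\rho)$. This two-step splitting serves the same purpose as your single spatial cut-off. One technical remark: the pointwise derivative bound you quote, $|X_Jh_t(y)|\lesssim t^{-(d+m)/2}\delta^{1/2}(y)\nep^{-|y|^2/(Ct)}$, is only recorded in the paper for $t\in(0,1)$; for the contribution of large $t$ one should instead use \eqref{stimeXJpt}, which carries $\nep^{\omega t}$ in place of $\delta^{1/2}$ and makes the role of ``$c$ large'' transparent.

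There is, however, a genuine gap in your treatment of the local piece. Your appeal to ``a H\"ormander-type gradient bound'' suffices for the $L^\infty\to\mathfrak{bmo}(\rho)$ endpoint, because condition \eqref{eq:hormander-cmmdual} asks for regularity of $K(y,x)=\ell(x^{-1}y)\delta(x)$ in the variable $y$: the two points $x^{-1}y$ and $x^{-1}z$ differ by \emph{right} multiplication, which is generated by the left-invariant fields, so $|X_j\ell|\lesssim |y|^{-d-1}$ does the job. The $\mathfrak{h}^1(\rho)\to L^1(\rho)$ endpoint is a different matter. Condition \eqref{eq:hormander-cmm} asks for regularity of $K(x,y)=k(y^{-1}x)\delta(y)$ in $y$; now $y^{-1}x$ and $z^{-1}x$ differ by \emph{left} multiplication, so one would need \emph{right}-invariant derivative bounds on the kernel, and the $\delta(y)$ factor also varies. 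Left-invariant Gaussian bounds on $X_Jh_t$ do not give this directly. The paper closes this gap with Lemma \ref{q_t}: it establishes H\"older regularity of $(t,y)\mapsto X_{J,x}P_t(x,y)$ via the parabolic Harnack inequality (Saloff-Coste), and then runs a weighted $L^2$ argument on annuli. This extra ingredient---regularity of the heat kernel in the \emph{second} spatial variable---is the missing idea in your proposal, and it is precisely where the nonunimodularity bites.
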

\medskip

Given a (sub)Laplacian $\Delta$ 
on a Lie group the question of the $L^p$-boundedness of the Riesz transforms
$\cR_j=X_j \Delta^{-1/2}$, and of their higher order analogue  
$\cR_J=X_J \Delta^{-m/2}$, 
where $J\in \{1,\dots,q\}^m$,  
has also a long and rich hystory.   
It is well known that the $L^p$-boundedness of the Riesz transforms
$\cR_j$, $j=1,\dots,q$,  is tighly connected to the equivalence of two  
natural definitions of homogeneous first order $L^p$  Sobolev spaces. 
The Riesz transforms
$\cR_j$ are known to be bounded on $L^p$ when 
the underlying group is stratified  \cite{F}, nilpotent \cite{LV}, 
of polynomial growth \cite{Ale} and on certain classes of Lie groups
of exponential growth \cite{HS, LM, Sj, SV}.  On nilpotent Lie groups
the Riesz transforms of higher order $\mathcal R_J$ are also bounded on $L^p$
\cite{ERS}, but it is known that this is not always the case (see
\cite{GQS} for an example of nonunimodular Lie group of exponential
growth where the Riesz transforms $\mathcal R_J$  
of order $2$   are unbounded on
$L^p$ for every $p\in [1,\infty)$).

In this paper we deal only with the local Riesz transforms.  The $L^p$
boundedness of the local Riesz transforms Theorem
\ref{local-Riesz-trans-Lp} 
is known to hold on nonamenable Lie groups \cite{L} and on every Lie
group when $\Delta$ is a complete Laplacian \cite{R}. 
Thus, it is
certainly an expected result, and maybe considered ``folklore'' by
many.  However,  to the best of our knowledge  this result is new in
the general setting of any subLaplacian on any noncompact Lie group, especially for the
 endpoint results. 
 
 \smallskip
 
We point out   that the problems considered in this paper, namely the
algebra property of Sobolev spaces and the $L^p$ boundedness of local
and global Riesz transforms, have been intensively studied also in the
context of Riemannian manifolds.  Without any pretense of 
exhaustiveness, 
we refer the reader to \cite{AC, ACDH, B, CD, MMV, Ru,
  Strichartz} and the references therein for the study of the
boundedness of Riesz transforms and to \cite{BBR, BCF, CRTN} for
Sobolev algebras on Riemannian manifolds satisfying suitable geometric
assumptions.

Finally, we mention that the Sobolev algebra property is of great
importance in the study of the well-posedness of Cauchy problems
involving the operator $\Delta$ in some nonlinear differential
equation, such as a nonlinear heat equation, or a nonlinear
Schr\"odinger equation, see \cite{BBR,Tao,PV}. 
\medskip

The paper is organized as follows. In Section \ref{preliminaries} we
recall all preliminaries and notation on nonunimodular  Lie groups,
the properties of the maximal functions, the estimates of the heat
kernel associated with $\Delta$ and the definition of the Hardy and
BMO spaces that will be used in the paper. Section \ref{secriesz} is
devoted to the study of the boundedness of local Riesz transforms of
any order associated with $\Delta$, and we also prove that the
 analogue  of Strichartz and Bohnke \cite{Strichartz-multi} and
\cite{B} results cited earlier cannot hold in a generic nonunimodular Lie group.   
In Section \ref{secalpha01} we
prove two representation formulas for the Sobolev norms in the case
when $\alpha$ is in $(0,1)$. Section \ref{secproof} is devoted to the
proof of Theorem \ref{t: productlemma2}, while we collect in Section
\ref{secfinal} some final comments and a discussion on the future
developments of this work.  

\medskip

Given two non-negative 
quantities $A$ and $B$, we write $A \lesssim B$ to indicate
that there is $C>0$ such that $A\le CB$, and the constant $C$ does not
depend on the relevant parameters involved in $A$ and $B$.  We also
write $A\approx B$ when $A\lesssim B$ and $B\lesssim A$. 
\medskip

We wish to thank the anonimous referee for her/his careful reading of
the manuscript and for making several useful comments.

\section{Preliminaries}\label{preliminaries}

The Carnot--Carath\'eodory metric on $G$ associated with ${\bf{X}}$ is
defined as follows. An absolutely continuous curve
$\gamma:[0,1]\rightarrow G$ is called horizontal if
$\gamma'(t)=\sum_{j=1}^qa_jX_j(\gamma(t))$ for every $t\in [0,1]$. The
length of such a curve is defined as
$\ell(\gamma)=\int_0^1\big(\sum_{j=1}^q|a_j|^2\big)^{1/2}\di t$. The
distance of two points $x,y\in G$ is defined as the infimum of the
lengths of all horizontal curves joining $x$ to $y$ and denoted by
$d_C(x,y)$. 
 Since the vector
fields $\{X_j\}_{j=1}^q$ are left invariant, the metric $d_C$ is left
invariant.  We denote by $|x|$ the distance of a point $x\in G$ from
the identity $e$ 
of $G$ in such metric. For every $x_0\in G$ and $r>0$, the open ball
centred at $x_0$ of radius 
$r$ is $B(x_0,r)=\{x\in G:d_C(x,x_0)<r\}$.  
When $x_0=e$, 
we simply write $B_r=B(e,r)$, and set
\begin{equation}\label{Vr}
V({r})=\rho(B_r)=\lambda(B_r)\,.
\end{equation} 
Notice that for every $x_0\in G$ and $r>0$
\begin{equation}\label{pallainx0}
\rho(B(x_0,r))=\delta^{-1}(x_0)\,V(r)\,.
\end{equation}
It is known \cite{G, V1, V2} that there exists a positive constant $d$ such that
\begin{equation}\label{pallepiccole}
V({r})\approx r^d\qquad \forall r\in (0,1]\,,
\end{equation}
and there exists $D>0$ such that 
\begin{equation}\label{pallegrandi}
V({r})\lesssim e^{Dr}\qquad \forall r\in (1,\infty)\,.
\end{equation}
Notice in particular that the space $(G,d_C,\rho)$ is locally
doubling. Moreover, there exists a constant $Q>0$ such that 
\begin{equation}\label{intdelta12}
\int_{B_r}\delta^{1/2}\di\rho\lesssim r^{d} \,e^{Q r}\qquad \forall r>0\,. 
\end{equation}
Indeed, when $r\in (0,1]$  
$$
\int_{B_r}\delta^{1/2}\di\rho\lesssim
\|\delta^{1/2}\|_{L^{\infty}(B_1)}\,V({r})\lesssim r^d\,; 
$$
when $r\in (1,\infty)$  
$$
\int_{B_r}\delta^{1/2}\di\rho\lesssim (\sup_{B_r}
\delta)^{1/2}\,V({r})\lesssim e^{Ar }\,e^{ Dr}\,, 
$$
where $D$ is the constant in \eqref{pallegrandi} and
$A=\frac{1}{2}\big(\sum_{i=1}^q |(X_i\delta)(e)|^2\big)^{1/2}$ (see
\cite[Proposition 5.7 (ii)]{HMM}) that gives \eqref{intdelta12}. Let
us mention that the fact that the integral of $\delta^{1/2}$ on any
ball grows at most exponentially with respect to the radius of the
ball is crucial in the proof of our results.  

\bigskip

In the sequel we shall often deal with left invariant operators on $G$
and their kernels. Recall that by the Schwartz kernel theorem, all
bounded operators $T : C^\infty_c(G) \to \mathcal D'(G)$ have an
integral kernel $K_T \in \mathcal D'(G \times G)$, such that 
\[
Tf(x) = \int_G K_T(x,y) \, f(y) \di \rho(y)
\]
in the sense of distributions. Further, if $T$ is left invariant, then
it admits a convolution kernel $k_T \in \mathcal D'(G)$, such that 
\begin{equation}
T f(x) = f*k_T(x) = \int_G f(xy^{-1}) \,k_T(y) \di\rho(y);
\end{equation}
in this case the convolution kernel $k_T$ is related to the integral kernel $K_T$ by
\begin{equation}\label{eq:conv_int_kernel}
K_T(x,y) = k_T(y^{-1} x) \, \delta(y).
\end{equation}

We shall list below some notation and well-known results which will be used in the sequel. 

\subsection{Heat kernel estimates}\label{pt}
Let $p_t$ be the heat kernel of $\Delta$ at time $t$, i.e. the
convolution kernel $k_{T_t}$ of the operator $T_t=e^{-t\Delta}$ and
let $P_t$ be the corresponding integral kernel. By
\cite[Section IX]{VCS} there exist positive constants $c_1 \dots, c_4$
such that for every $x\in G$ and $t\in (0,1)$:  
\begin{itemize}
\item[(i)] $\int_Gp_t\di\rho=1$; \smallskip
\item[(ii)] $\delta^{1/2}(x)\, V(\sqrt t)^{-1}e^{-c_1|x|^2/t} \lesssim
  p_t (x)\lesssim  \delta^{1/2}(x)\, V(\sqrt t)^{-1}e^{-c_2|x|^2/t}
  $;  \smallskip
\item[(iii)] $|\frac{\partial p_t}{\partial t}(x)|\lesssim
  \delta^{1/2}(x)\,t^{-1}\,V(\sqrt t)^{-1}e^{-c_3|x|^2/t} $;  \smallskip
\item[(iv)] $|X_i p_t (x)|\lesssim \delta^{1/2}(x)\,t^{-1/2}\,V(\sqrt 
  t)^{-1}e^{-c_4|x|^2/t}   $\,. 
\end{itemize}
By \cite[p. 132]{tER} there exist $\omega\geq 0, b>0$ such that for
every multiindex  
$J\in\{1,\dots,q\}^m$   and every $t>0$ 
\begin{equation}\label{stimeXJpt}  
|X_J p_t(x)|  
\lesssim t^{-\frac{d+m}{2}}\,e^{\omega t}\,e^{-b|x|^2/t}\qquad \forall x\in G\,.
\end{equation}
By using the heat semigroup $T_t$ for every $\beta>0$ we define the 
 $g$-function 
$$
g_{\beta}f=\Big( \int_0^{\infty} |(t\Delta)^{\beta}T_tf|^2\frac{\di
  t}{t} \Big)^{1/2}\,. 
$$

Since $T_t$ is a diffusion semigroup symmetric with
respect to the 
measure $\rho$, it is well known that for every $p\in (1,\infty)$ and
every $f\in L^p$ 
\begin{equation}\label{galphap}
\|f\|_{p}\approx \|g_{\beta}f\|_p\,.
\end{equation}
See \cite{M, S}. 

\subsection{Maximal functions}
For every $R>0$ we define $\mathcal B_R$ as the set of all balls of
radius $\leq R$. The corresponding local Hardy--Littlewood maximal
function with respect to the right Haar measure is given by  
\begin{equation}\label{MR}
M^{R}f(x)= \sup_{B\in\mathcal B_R, x\in B} \frac{1}{\rho(B)} \int_B|f|\di\rho\,.
\end{equation}
The operator $M^{R}$ is bounded on $L^p$ for every $p\in (1,\infty]$ and it is of
weak type $(1,1)$. We also introduce the modified local Hardy--Littlewood maximal
function, with parameter $\beta\in [0,1)$ with respect to the right
Haar measure, given by  
\begin{equation}\label{MRbeta}
M^R_{\beta}f(x)= \sup_{B\in\mathcal B_R, x\in B} \frac{1}{\rho(B)^{1-\beta}} \int_B|f|\di\rho\,.
\end{equation}
It is easy to show that $M^R_{\beta}$ is bounded from
$L^{\frac{1}{\beta}}$ to $L^{\infty}$ and from $L^1$ to the Lorentz
space $L^{\frac{1}{1-\beta},\infty}$, so that by interpolation it is
bounded from $L^p$ to $L^q$ whenever $\frac1q=\frac1p-\beta$ and $p\in
(1,\frac{1}{\beta}]$.

We denote by $
\mathcal M_0$ the local heat maximal function defined by 
$$
\mathcal M_0 f=\sup_{0<t\leq 1}|f\ast p_t|\,.
$$
It is known that $
\mathcal M_0$ is bounded on $L^p$ for $p\in (1,\infty)$ \cite{CGGM, S}.

It is easy to see that the statements of \cite[Propositions
7-8-9-10]{CRTN} which concern global maximal operators can be
reformulated for the local maximal functions $\mathcal M_0$ and $M^R$:
indeed only the $L^p$-boundedness for $p\in (1,\infty)$ of the local maximal
functions and the local doubling property are neeeded to adapt the
proofs of \cite[Propositions 7-8-9-10]{CRTN} to our setting.

\subsection{The spaces $\mathfrak{h}^1(\rho)$ and 
$\mathfrak{bmo}(\rho)$}\label{h1-bmo-subsec}

The theory of Hardy spaces of Goldberg type developed in \cite{MeVo}
applies to the space $(G,d_C,\rho)$. For the reader's convenience, we
recall here briefly the definition of the atomic Hardy space
$\mathfrak h^1(\rho)$ and its dual $\mathfrak{bmo}(\rho)$ and a few
related results. We refer the reader to \cite{Go} for details on the
theory of Goldberg Hardy spaces in the Euclidean setting and to
\cite{MeVo, T} for the corresponding theory in the context of metric
spaces and Riemannian manifolds. 
 
\begin{definition}
A standard atom at scale $1$ is a function $a\in L^1$ supported in a
ball $B\in \mathcal B_1$ such that 
\begin{itemize} 
\item[(i)] $\|a\|_{2}\leq \rho(B)^{-1/2}$;
\item[(ii)] $\int a\di\rho=0$.
\end{itemize}
A global atom at scale $1$ is a function $a\in L^1$ supported in
a ball $B$ of radius exactly $1$ such that $\|a\|_{2}\leq
\rho(B)^{-1/2}$.  
Standard and global atoms at scale $1$ will be referred to as atoms at
scale $1$. 
\end{definition}
The Hardy space $\mathfrak h^1(\rho)$ is defined as the space 
$$
\mathfrak h^1(\rho)=\left\{f\in L^1(\rho): \,\, f=\sum_k c_k a_k,\,
  a_k\,\,{\rm{atom\,\, at\, scale}}\, 1,\, c_k \in \CC,\,
  \sum_k|c_k|<\infty \right\}, 
$$
endowed with the usual atomic norm
$$
\|f\|_{\mathfrak h^1}=\inf\left\{ \sum_k |c_k|:\,\, f = \sum_k c_k
  a_k,\, a_k\,\,{\rm{atom\,\, at\, scale}}\, 1,\, c_k \in \CC
\right\}. 
$$

By \cite[Theorem 2]{MeVo} the dual of $\mathfrak h^1(\rho)$ can be
identified with the space $\mathfrak{bmo}(\rho)$ of all equivalence
classes of locally integrable functions $g$ modulo constants such that 
$$
\begin{aligned}
\|g\|_{\mathfrak{bmo}} 
&:=\sup_{B\in\mathcal B_1} \left( \frac{1}{\rho(B)} \int_B |g-g_B|^2
  \di\rho \right)^{1/2}  \\
& \qquad +  \sup_{x\in G} \left( \frac{1}{\rho(B(x,1))} \int_{B(x,1)} |g|^2 \di\rho \right)^{1/2} < \infty\,,
\end{aligned}
$$
where $g_B = \rho(B)^{-1} \int_B g \di \rho$.

By \cite[Theorem 8.2]{CMM1} and \cite[Proposition 4.5]{CMM2} the
following criterion for the boundedness of integral operators on $G$
holds. 

\begin{proposition}\label{prp:hardy_hormander}
If $T$ is a  bounded operator on $L^2$ and its integral kernel
 $K_T$ is a locally integrable function away from the diagonal of $G\times G$ such that 
\begin{equation}\label{eq:hormander-cmm}
  \sup_{B\in \mathcal B_1} \sup_{y,z\in B} \int_{(2B)^c}|K_T(x,y)-K_T(x,z)| \di \rho(x) < \infty
\end{equation} 
and
\begin{equation}\label{eq:hormander-mv}
   \sup_{y\in G} \int_{(B(y,2))^c}|K_T(x,y)| \di \rho(x) < \infty ,
\end{equation} 
then $T$ is bounded from $\mathfrak h^1(\rho)$ to $L^1$.

If $T$ is a  bounded operator on $L^2$ and its integral kernel
 $K_T$ is a locally integrable function off the diagonal of $G\times G$ such that 
\begin{equation}\label{eq:hormander-cmmdual}
  \sup_{B\in \mathcal B_1} \sup_{y,z\in B} \int_{(2B)^c}|K_T(y,x)-K_T(z,x)| \di \rho(x) < \infty
\end{equation} 
and
\begin{equation}\label{eq:hormander-mvdual}
  \sup_{y\in G} \int_{(B(y,2))^c}|K_T(y,x)| \di \rho(x) < \infty ,
\end{equation} 
then $T$ is bounded from $L^{\infty}$ to $\mathfrak{bmo}(\rho)$. 
\end{proposition}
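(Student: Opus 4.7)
The plan is to prove the two halves of the proposition by separate but parallel arguments: the $\mathfrak h^1(\rho) \to L^1$ bound by an atomic decomposition argument, and the $L^\infty \to \mathfrak{bmo}(\rho)$ bound by duality from the first half applied to the adjoint $T^*$.

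For the first assertion, since every $f\in\mathfrak h^1(\rho)$ admits an atomic decomposition $f=\sum_k c_k a_k$ with $\sum_k|c_k|\lesssim\|f\|_{\mathfrak h^1}$, it is enough to show that $\|Ta\|_1\lesssim 1$ uniformly over atoms $a$ at scale $1$. Fix an atom $a$ supported in a ball $B=B(y_0,r)$. I would split $\|Ta\|_1=\int_{2B}|Ta|\di\rho+\int_{(2B)^c}|Ta|\di\rho$. The local part is handled by Cauchy--Schwarz, $L^2$-boundedness of $T$, the atomic normalization $\|a\|_2\le\rho(B)^{-1/2}$, and local doubling \eqref{pallepiccole}-\eqref{pallainx0} to dominate $\rho(2B)^{1/2}\rho(B)^{-1/2}$ by a constant (here one uses that $r\le 1$ so one remains in the locally doubling regime). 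For the far part I distinguish the two types of atom: for a standard atom, the cancellation $\int a\di\rho=0$ lets me write
$$
Ta(x)=\int_B \bigl[K_T(x,y)-K_T(x,y_0)\bigr]a(y)\di\rho(y),
$$
so by Fubini
$$
\int_{(2B)^c}|Ta(x)|\di\rho(x)\le \int_B|a(y)|\int_{(2B)^c}|K_T(x,y)-K_T(x,y_0)|\di\rho(x)\di\rho(y),
$$
which is controlled by \eqref{eq:hormander-cmm} times $\|a\|_1\le\rho(B)^{1/2}\|a\|_2\lesssim 1$. For a global atom (radius exactly $1$, no cancellation) the condition \eqref{eq:hormander-mv} bounds the far part directly via Fubini, since then $2B\subset B(y_0,2)$ up to an adjustment of radius that plays no role thanks to local doubling.

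For the second assertion I would invoke duality: identify the adjoint $T^*$ with respect to $\rho$ via the kernel relation $K_{T^*}(y,x)=\overline{K_T(x,y)}$, so that \eqref{eq:hormander-cmmdual}-\eqref{eq:hormander-mvdual} for $T$ are exactly \eqref{eq:hormander-cmm}-\eqref{eq:hormander-mv} for $T^*$. Hence $T^*$ is bounded from $\mathfrak h^1(\rho)$ to $L^1$ by the argument above, and since $\mathfrak{bmo}(\rho)=\mathfrak h^1(\rho)^*$ by \cite{MeVo}, the adjoint of this bound gives $T\colon L^\infty\to\mathfrak{bmo}(\rho)$.

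The main obstacle, compared with the classical Euclidean Goldberg setting, is that $(G,d_C,\rho)$ is only locally doubling: the argument must be carefully confined to scales $\le 1$, which is exactly why the atomic decomposition uses atoms at scale $1$ and why both Hörmander-type conditions are stated only for $B\in\mathcal B_1$ together with the global integrability tails \eqref{eq:hormander-mv}, \eqref{eq:hormander-mvdual}. A secondary technical point is the asymmetry between the convolution kernel $k_T$ and the integral kernel $K_T$ expressed in \eqref{eq:conv_int_kernel}, which means the modular function $\delta$ must be carried through both the change-of-variables and the duality computation; this is what distinguishes the present nonunimodular statement from its unimodular counterpart.
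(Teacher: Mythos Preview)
The paper does not give its own proof of this proposition: it is stated as a known result, with the sentence ``By \cite[Theorem 8.2]{CMM1} and \cite[Proposition 4.5]{CMM2} the following criterion \dots\ holds'' immediately preceding it. So there is nothing to compare your argument against except the external references, whose content your sketch essentially reproduces.

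Your atomic argument for the $\mathfrak h^1(\rho)\to L^1$ bound is the standard one and is correct. Two small points are worth making explicit. First, the passage from ``$\|Ta\|_1\lesssim 1$ uniformly over atoms'' to boundedness on all of $\mathfrak h^1(\rho)$ is not automatic in general; here it is justified because $T$ is bounded on $L^2$ and the atomic decomposition of $f\in\mathfrak h^1(\rho)\cap L^2$ converges in $L^2$, so $Tf=\sum_k c_k Ta_k$ in $L^2$ and one can pass to $L^1$ by Fatou. Second, for a global atom the radius is exactly $1$, so $2B=B(y_0,2)$ on the nose and \eqref{eq:hormander-mv} applies without any adjustment.

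Your duality argument for the $L^\infty\to\mathfrak{bmo}(\rho)$ direction is also correct, but note a definitional wrinkle: $T$ is a priori only defined on $L^2$, so for $g\in L^\infty$ one must either restrict to $g\in L^\infty\cap L^2$ and extend, or---more cleanly---define $Tg$ as an element of $\mathfrak{bmo}(\rho)$ via the pairing $\langle Tg,a\rangle:=\langle g,T^*a\rangle$ for atoms $a$, which is exactly what the duality $(\mathfrak h^1)^*=\mathfrak{bmo}$ and the bound $T^*:\mathfrak h^1\to L^1$ provide. The references the paper cites handle this directly rather than by duality, but your route is equivalent.
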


Furthermore, by  \cite[Theorem 5]{MeVo}, the following interpolation result holds, where $(V,W)_{[\theta]}$ denotes the lower complex interpolation space of parameter $\theta \in (0,1)$ between the Banach spaces $V,W$ (see \cite{BL}).

\begin{theorem}\label{thm:hardy_interpol}
Let $\theta \in (0,1)$ and set $p_\theta = 2/(2-\theta)$. Then
$(\mathfrak h^1(\rho),L^2)_{[\theta]} = L^{p_\theta}$ and
$(\mathfrak{bmo}(\rho),L^2)_{[\theta]} = L^{p'_\theta}$.  
\bigskip
\end{theorem}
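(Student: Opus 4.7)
The plan is to prove $(\mathfrak h^1(\rho), L^2)_{[\theta]} = L^{p_\theta}$ by two inclusions and to deduce the identification for $\mathfrak{bmo}(\rho)$ by duality.

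For the easier inclusion $(\mathfrak h^1(\rho), L^2)_{[\theta]} \hookrightarrow L^{p_\theta}$, I would first observe that each atom $a$ at scale $1$, supported in a ball $B\in\mathcal B_1$, satisfies $\|a\|_1 \le \rho(B)^{1/2}\|a\|_2 \le 1$, so $\mathfrak h^1(\rho)\hookrightarrow L^1(\rho)$. Any admissible analytic family $F$ witnessing that $f\in(\mathfrak h^1,L^2)_{[\theta]}$ with $\|F(it)\|_{\mathfrak h^1},\|F(1+it)\|_{2}\le 1$ is therefore bounded from the strip into $L^1$ on $\mathrm{Re}\,z=0$ and into $L^2$ on $\mathrm{Re}\,z=1$. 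The classical Calder\'on--Stein complex interpolation of $L^p$-spaces then gives $f=F(\theta)\in L^{p_\theta}$, since $1/p_\theta = (1-\theta)+\theta/2$.

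The substantive work is the reverse inclusion $L^{p_\theta}\hookrightarrow(\mathfrak h^1,L^2)_{[\theta]}$. Given $f\in L^{p_\theta}$ with $\|f\|_{p_\theta}=1$, the goal is to construct an analytic family $F$ on the closed strip with $F(\theta)=f$ and uniformly bounded boundary values in $\mathfrak h^1$ and in $L^2$. I would proceed via an atomic decomposition of $L^{p_\theta}$ compatible with $\mathfrak h^1(\rho)$: a Calder\'on--Zygmund stopping-time argument at dyadic heights $2^k$ performed with the local maximal operator $M^1$ defined in \eqref{MR} yields $f=\sum_{k,j}c_{k,j}a_{k,j}$ with each $a_{k,j}$ (up to a universal multiple) an atom at scale $1$, $\sum_{k,j}|c_{k,j}|^{p_\theta}\lesssim 1$, and the supports having bounded overlap. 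Setting
$$
F(z)=\sum_{k,j}|c_{k,j}|^{p_\theta\bigl((1-z)+z/2\bigr)}\,\mathrm{sgn}(c_{k,j})\,a_{k,j},
$$
one verifies $F(\theta)=f$; on $\mathrm{Re}\,z=0$ the coefficients have modulus $|c_{k,j}|^{p_\theta}$, so $\|F(it)\|_{\mathfrak h^1}\lesssim\sum|c_{k,j}|^{p_\theta}\lesssim 1$; on $\mathrm{Re}\,z=1$ they have modulus $|c_{k,j}|^{p_\theta/2}$, and using $\|a_{k,j}\|_2\le\rho(B_{k,j})^{-1/2}$ together with bounded overlap one gets $\|F(1+it)\|_2^2\lesssim\sum|c_{k,j}|^{p_\theta}\lesssim 1$.

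The main obstacle is producing this atomic decomposition within the Goldberg-type Hardy space $\mathfrak h^1(\rho)$: atoms come in two flavours, standard atoms (with vanishing integral, on balls of radius $\le 1$) and global atoms (no cancellation, on balls of radius exactly $1$), while the Calder\'on--Zygmund decomposition naturally produces pieces on balls of arbitrary radius. One must subdivide large selected balls into balls of radius $1$ using the locally doubling property together with the volume bounds \eqref{pallepiccole}--\eqref{pallegrandi}, and absorb the non-cancellative residuals as a controlled sum of global atoms, while preserving the overlap estimate required at $\mathrm{Re}\,z=1$. Once this is established, the identification $(\mathfrak h^1(\rho))^*=\mathfrak{bmo}(\rho)$ of \cite[Theorem 2]{MeVo}, the reflexivity of $L^2$ and the density of $L^2\cap\mathfrak h^1(\rho)$ in both endpoints (which makes the couple regular) allow me to invoke the duality theorem for Calder\'on's complex interpolation (see \cite{BL}) to obtain $(\mathfrak{bmo}(\rho),L^2)_{[\theta]} = ((\mathfrak h^1(\rho),L^2)_{[\theta]})^* = (L^{p_\theta})^* = L^{p'_\theta}$.
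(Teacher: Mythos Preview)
The paper does not prove this theorem: it is stated with the preface ``by \cite[Theorem 5]{MeVo}, the following interpolation result holds'', and no argument is given. So there is no in-paper proof to compare your proposal against; the result is simply imported from the Meda--Volpi reference.

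Your outline is nonetheless a faithful sketch of how such results are established (and is essentially the strategy carried out in \cite{MeVo}, following the classical Fefferman--Stein/Calder\'on--Torchinsky pattern adapted to the Goldberg setting). You have correctly located the genuine difficulty: in a space that is only locally doubling, the Calder\'on--Zygmund stopping-time produces balls of arbitrarily large radius, and one must chop these into pieces of radius $1$ and absorb the resulting non-cancellative parts as global atoms while keeping both the $\ell^1$ control on the coefficients (for the $\mathfrak h^1$ bound at $\mathrm{Re}\,z=0$) and the almost-orthogonality/bounded overlap (for the $L^2$ bound at $\mathrm{Re}\,z=1$). That step is where all the work in \cite{MeVo} lies, and your proposal acknowledges but does not carry it out. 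The duality step at the end is correct as stated: since $L^2$ is reflexive, Calder\'on's duality theorem for the complex method applies directly once $(\mathfrak h^1(\rho),L^2)_{[\theta]}=L^{p_\theta}$ is known, and the identification $(\mathfrak h^1(\rho))^*=\mathfrak{bmo}(\rho)$ is exactly \cite[Theorem 2]{MeVo}.
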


\section{Boundedness of the local Riesz transforms}\label{secriesz}

Recall that for every multiindex $J\in\{1,\dots,q\}^m$ 
the local Riesz transform of order $m$ is defined by
$R^c_J=X_J(cI+\Delta)^{-m/2}$ (see \eqref{local-Riesz-trans-def}).

In order to prove the 
boundedness  of the local  Riesz transforms on 
$L^p$,  we shall need to apply the following result concerning
the derivatives of the heat kernel at small times.
The techniques used in the proof of the following lemma follow
closely those used in \cite{Gr, Ru}.  
\begin{lemma}\label{q_t}
Let $B=B(c_B,r_B)$ be a ball of radius $r_B\leq 1$, $t\in
[r_B^2,1]$ and $y,z\in B$. For every $x\in G$ define  
$$
q_t(x)=P_t(x,y)-P_t(x,z)\,. 
$$
Then there exist $\gamma\in (0,1)$, $c_5>0$  such that 
\begin{itemize}
\item[(i)] $|q_t(x)|\lesssim \delta^{1/2}(c_Bx)\,V(\sqrt
  t)^{-1}\,\Big(\frac{d(y,z)}{\sqrt
    t}\Big)^{\gamma}\,e^{-c_5|c_B^{-1}x|^2/t}$;\smallskip 
\item[(ii)] for every $\beta<2c_5$, 
$$
\int_{2r_B\leq |c_B^{-1}x|\leq 2}|q_t(x)|^2  e^{\beta
  |c_B^{-1}x|^2/t}\di\rho(x)\lesssim \delta(c_B)\,V(\sqrt
t)^{-1}\,\Big(\frac{d(y,z)}{\sqrt t}\Big)^{2\gamma}\,; 
$$
\item[(iii)] for every $\beta<2c_5$ and  $J\in\{1,\dots,q\}^m$, $m$
  a non-negative integer,
$$
\int_{2r_B\leq |c_B^{-1}x|\leq 2}|X_Jq_t(x)|^2  e^{\beta
  |c_B^{-1}x|^2/t}\di\rho(x)\lesssim t^{-m}\,  \delta(c_B)\,V(\sqrt
t)^{-1}\,\Big(\frac{d(y,z)}{\sqrt t}\Big)^{2\gamma}\,. $$   
\end{itemize}
\end{lemma}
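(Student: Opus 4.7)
The proof has three parts, all starting from the identity \eqref{eq:conv_int_kernel}, which gives $q_t(x) = p_t(y^{-1}x)\,\delta(y) - p_t(z^{-1}x)\,\delta(z)$, and the splitting
$$
q_t(x) = \bigl[p_t(y^{-1}x) - p_t(z^{-1}x)\bigr]\,\delta(y) + p_t(z^{-1}x)\,\bigl[\delta(y) - \delta(z)\bigr].
$$

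For (i), I would interpolate the pointwise heat kernel bound (ii) of Subsection \ref{pt} with the gradient bound (iv), using a minimizing horizontal curve from $y$ to $z$ lying inside $2B$. The trivial estimate yields $|p_t(y^{-1}x) - p_t(z^{-1}x)| \lesssim \delta^{1/2}(y^{-1}x)\,V(\sqrt t)^{-1}\,e^{-c|y^{-1}x|^2/t}$, while the mean-value inequality along the curve together with (iv) gives an improved bound with an extra factor $d(y,z)\,t^{-1/2}$; taking the $\gamma$-th power of their minimum produces the factor $(d(y,z)/\sqrt t)^\gamma$ for any $\gamma\in(0,1)$. The smoothness of $\delta$ and its comparability on $B$ yield $|\delta(y)-\delta(z)|\lesssim \delta(c_B)\,d(y,z)$, and since $t\ge r_B^2$ forces $d(y,z)\le 2\sqrt t$, the linear factor $d(y,z)$ is absorbed into $(d(y,z)/\sqrt t)^\gamma$. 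The identity $\delta(w^{-1}x)=\delta(w)^{-1}\delta(x)$, together with $\delta(y)\asymp \delta(c_B)$ on $B$, combines the $\delta$-factors to $\delta^{1/2}(c_B x)$, while $|y^{-1}x|\ge \tfrac12|c_B^{-1}x|$ on the annulus $|c_B^{-1}x|\ge 2r_B$ converts the Gaussian into the claimed $e^{-c_5|c_B^{-1}x|^2/t}$ (with a smaller constant); for $|c_B^{-1}x|<2r_B\le 2\sqrt t$ the target exponential is of order one and the bound reduces to the crude pointwise heat-kernel estimate.

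For (ii) I would square the pointwise estimate from (i), multiply by $e^{\beta|c_B^{-1}x|^2/t}$ and change variables via $x=c_Bu$. Using $\di\lambda=\delta\,\di\rho$ and the left-invariance of $\lambda$ gives the Jacobian $\di\rho(x)=\delta(c_B)^{-1}\di\rho(u)$, which turns $\delta(c_Bx)\,\di\rho(x)$ into $\delta(c_B)\,\di\lambda(u)$ and reduces the matter to
$$
\int_{|u|\le 2} e^{-\eta|u|^2/t}\,\di\lambda(u) \lesssim V(\sqrt t),\qquad \eta=2c_5-\beta>0,
$$
which I would prove by dyadic decomposition using $V(r)\asymp r^d$ on $r\le 1$ and the boundedness of $V$ on $r\in(1,2]$. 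For (iii), the left-invariance of $X_J$ yields $X_J q_t(x)=(X_J p_t)(y^{-1}x)\,\delta(y)-(X_J p_t)(z^{-1}x)\,\delta(z)$, so the same splitting applies. I would use \eqref{stimeXJpt} with multi-indices of lengths $m$ and $m+1$ for the pointwise and gradient estimates, which on $t\in(0,1]$ scale as $t^{-m/2}V(\sqrt t)^{-1}e^{-b|w|^2/t}$ and $t^{-(m+1)/2}V(\sqrt t)^{-1}e^{-b|w|^2/t}$, after absorbing $e^{\omega t}\lesssim 1$. These lack the $\delta^{1/2}$ factor present in (iv) of Subsection \ref{pt}, but the full $\delta(y)\asymp\delta(c_B)$ compensates; squaring the resulting pointwise bound for $X_J q_t$ and executing the same change of variables as in (ii) then yields $t^{-m}\delta(c_B)V(\sqrt t)^{-1}(d(y,z)/\sqrt t)^{2\gamma}$.

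The main technical difficulty is the careful bookkeeping of the $\delta$-factors: the $\delta^{1/2}$ of the heat kernel, the $\delta(y)$ from the convolution-to-integral-kernel passage \eqref{eq:conv_int_kernel}, the $\delta(c_B)^{-1}$ from the change of variables, and the identity $\delta\,\di\rho=\di\lambda$ must all combine to produce the correct $\delta(c_B)$ in the final bound; in addition, the Gaussian decay in $|y^{-1}x|$ must be transferred to $|c_B^{-1}x|$ on the relevant annulus via the triangle inequality. This bookkeeping is what distinguishes the present nonunimodular setting from the unimodular case of \cite{CRTN}, where all $\delta$'s are trivial.
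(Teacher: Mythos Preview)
Your approach differs from the paper's and contains a gap in parts (i) and (iii). The paper does not interpolate between pointwise and gradient heat-kernel bounds; instead it invokes the parabolic H\"older regularity of solutions to the heat equation (\cite[Proposition 3.2]{SC}, \cite[Proposition 10]{Ru}). For fixed $x$, the function $(t,w)\mapsto P_t(x,w)$ solves $(\partial_t+\Delta_w)u=0$, so one gets directly
\[
|q_t(x)|\lesssim\Big(\frac{d(y,z)}{\sqrt t}\Big)^{\gamma}\sup_{(\tau,w)\in Q}P_\tau(x,w),
\]
and the upper bound (ii) of Subsection~\ref{pt} finishes (i). For (iii) the same H\"older regularity is applied to $(t,w)\mapsto X_{J,x}P_t(x,w)$ and combined with \eqref{stimeXJpt}. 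Your part (ii), and the $\delta$-bookkeeping in it, are correct and essentially match the paper.

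The gap in your mean-value argument is the following. Along a horizontal curve $\gamma$ from $y$ to $z$ one has
\[
\frac{d}{ds}\,p_t\bigl(\gamma(s)^{-1}x\bigr)=-\sum_j a_j(s)\,(\tilde X_j p_t)\bigl(\gamma(s)^{-1}x\bigr),
\]
where $\tilde X_j$ is the \emph{right}-invariant field with $\tilde X_j(e)=X_j(e)$; bound (iv) controls $X_j p_t$, not $\tilde X_j p_t$. Equivalently, $d_C(y^{-1}x,z^{-1}x)$ is not comparable to $d_C(y,z)$, because the metric is only left-invariant. For (i) this is repairable: using the symmetry $P_t(x,y)=P_t(y,x)$ (self-adjointness of $T_t$ on $L^2(\rho)$) gives $q_t(x)=\delta(x)\bigl[p_t(x^{-1}y)-p_t(x^{-1}z)\bigr]$, and the mean-value for $w\mapsto p_t(x^{-1}w)$ genuinely involves $X_ip_t$ by left-invariance. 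For (iii), however, you would then need bounds on the mixed derivatives $\tilde X_i X_J p_t$; these can be extracted from \eqref{stimeXJpt} via the identity $p_t(g)=\delta(g)\,p_t(g^{-1})$, but this is nontrivial additional work that your sketch does not mention. The paper's use of parabolic H\"older regularity sidesteps the left/right-invariance issue entirely.
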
 

\proof 
We first prove (i). Fix $x\in G$ and define 
$u(t,\cdot)=P_t(x,\cdot)$. 
 Then $u$ is a solution of the heat equation
$(\partial_t+\Delta)u=0$. By applying \cite[Proposition 10]{Ru} (see
also \cite[Proposition 3.2]{SC}) we deduce that there exists
$\gamma\in (0,1)$ such that  
$$
\begin{aligned}
|q_t(x)|&\lesssim \Big(\frac{d(y,z)}{r_B}\Big)^{\gamma}\sup_{(\tau,w)\in Q} P_{\tau}(x,w)\\
&\lesssim \Big(\frac{d(y,z)}{\sqrt t}\Big)^{\gamma}\sup_{(\tau,w)\in
  Q} \delta (w)\delta^{1/2}(w^{-1}x)\,V(\sqrt
\tau)^{-1}\,e^{-c_2|w^{-1}x|^2/t}\,, 
\end{aligned}
$$
where $Q=(\frac{4}{9} t,\frac{20}{9} t)\times B\Big(y,\frac{4}{3}\sqrt
t\Big)$. Using the fact that $y,z\in B$, $r_B\leq 1$ and $r_B^2\leq
t$, we deduce that there exists $c_5>0$ such that 
$$
|q_t(x)|\lesssim \Big(\frac{d(y,z)}{\sqrt t}\Big)^{\gamma}
\delta^{1/2}(c_Bx)\,V(\sqrt t)^{-1}\,e^{-c_5|c_B^{-1}x|^2/t}  \qquad
\forall x\in G\,. 
$$
To prove (ii) we apply (i) and the change of variables $c_B^{-1}x=v$ to obtain 
$$
\begin{aligned}
& \int_{2r_B\leq |c_B^{-1}x|\leq 2}|q_t(x)|^2  e^{\beta
  |c_B^{-1}x|^2/t}\di\rho(x) \\
&\quad
\lesssim \Big(\frac{d(y,z)}{\sqrt t}\Big)^{2\gamma} V(\sqrt t)^{-2}
\int_{2r_B\leq |c_B^{-1}x|\leq 2}
\delta(c_Bx)\,e^{(-2c_5+\beta)|c_B^{-1}x|^2/t} \di\rho(x)\\ 
& \quad
\lesssim  \Big(\frac{d(y,z)}{\sqrt t}\Big)^{2\gamma} V(\sqrt t)^{-2}
\delta(c_B) \int_{2r_B\leq |v|\leq 2}\,e^{(-2c_5+\beta)|v|^2/t}
\di\lambda(v)\,.
\end{aligned}
$$
If $2r_B\leq \sqrt t$ we choose $j_0$ as the smallest integer such that $2^{j_0+1}\sqrt t \geq
2$ and obtain 
$$
\begin{aligned}
& \int_{2r_B\leq |c_B^{-1}x|\leq 2}|q_t(x)|^2  e^{\beta
  |c_B^{-1}x|^2/t}\di\rho(x) \\
& 
\lesssim  \Big(\frac{d(y,z)}{\sqrt t}\Big)^{2\gamma} V(\sqrt t)^{-2}  \delta(c_B) \\
&\qquad \times \Big[\int_{2r_B\leq |v|\leq \sqrt t } \di\lambda(v)
+\sum_{j=0}^{j_0} \int_{2^j\sqrt t\leq |v|\leq 2^{j+1}\sqrt t
}e^{(-2c_5+\beta)2^{2j}}  \di\lambda(v)  \Big]\\
& \lesssim  \Big(\frac{d(y,z)}{\sqrt
  t}\Big)^{2\gamma} V(\sqrt t)^{-2}  \delta(c_B)\Big[ V(\sqrt t)+ \sum_{j=0}^{j_0} e^{(-2c_5+\beta)2^{2j}}  (2^{j+1}\sqrt t)^d \Big]\\
& \lesssim  \Big(\frac{d(y,z)}{\sqrt
  t}\Big)^{2\gamma} V(\sqrt t)^{-1}  \delta(c_B)\,,
\end{aligned}
$$
where we used the fact that $\beta<2c_5$. The proof in the case when
$\sqrt t\leq 2r_B$ is similar and is omitted.  

We now prove (iii). Take $J\in\{1,\dots,q\}^m$, where $m$ is a non-negative integer. For every $x\in G$, the function $(t,y) \mapsto  X_{J,x}P_t(x,y)$ is a solution of the heat equation $\partial_t u +\Delta u=0$. Thus, by~\cite[Proposition 3.2]{SC} 
$$
|X_{J} q_t(x)| \lesssim \left( \frac{d_C(y,z)}{\sqrt{t}}\right)^{\gamma} \sup_{(\tau, w)\in Q} |X_{J,x} P_\tau (x,w)|
$$
for some $\gamma \in (0,1)$, where $Q=\left(\frac{4}{9} t,\frac{20}{9} t\right )\times B\left(y,\frac{4}{3}\sqrt t\right)$. By means of  \eqref{stimeXJpt} and the assumptions on $r_B, t, y, z$ we get
$$
\begin{aligned}
|X_{J,x} P_\tau (x,w)|
& \lesssim    \tau^{-(d+m)/2} \,e^{\omega \tau}\,e^{-b|w^{-1}x|^2/\tau}\,.
\end{aligned}
$$
Since $\tau \approx t$, $t\in (0,1)$ and $t^{-d/2}\approx V(\sqrt{t})^{-1}$, there exists a constant $c_5$ such that 
\begin{align}\label{XJqPuntuale}
|X_{J} q_t(x)|  \lesssim \left( \frac{d_C(y,z)}{\sqrt{t}}\right)^{\gamma}   t^{-m/2}V(\sqrt{t})^{-1} e^{-c_5 |c_B^{-1}x|^2/t}\,.
\end{align}
If $\beta<2c_5$ we can apply \eqref{XJqPuntuale}, use the change of variables $c_B^{-1}x=v$ and argue as in the proof of (ii) to obtain that
$$
\int_{2r_B\leq |c_B^{-1}x|\leq 2}|X_Jq_t(x)|^2  e^{\beta
  |c_B^{-1}x|^2/t}\di\rho(x)\lesssim t^{-m}\,  \delta(c_B)\,V(\sqrt
t)^{-1}\,\Big(\frac{d(y,z)}{\sqrt t}\Big)^{2\gamma}\,. 
\qed  \medskip
$$    
 

We are now ready to prove Theorem \ref{local-Riesz-trans-Lp}.

\proof[Proof of Theorem \ref{local-Riesz-trans-Lp}] Fix $J\in\{1,\dots,q\}^m$. By~\cite[Theorem 4.8, IV]{TER1} if $c>0$ is large enough then the local Riesz transforms $R_J^c$ is bounded on $L^2$. The convolution
kernel of $R^c_J$ is given by 
$$
k^c_J(x)=c_J\int_0^1 t^{m/2-1} e^{-ct}X_Jp_t(x)\di
t+c_J\int_1^{\infty} t^{m/2-1} e^{-ct}X_J  p_t(x)\di
t=k^0(x)+k^{\infty}(x)\,. 
$$  
We claim that $k^{\infty}\in L^1$ if $c$ is sufficiently
large. Indeed, we can choose $c$ sufficiently large so that 
$$
|k^{\infty}(x)|\lesssim \int_1^{\infty} t^{m/2-1} e^{-ct}
t^{-(d+m)/2} e^{\omega t}\,e^{-b|x|^2/t}  \di t\lesssim
\int_1^{\infty}  e^{-c't}   \,e^{-b|x|^2/t}  \di t\,, 
$$
for some $c'>0$. Now define for every $t>0$, $A^i_t=B_{2^{i}\sqrt
t}\setminus B_{2^{i-1}\sqrt t}$, $i\geq 1$. Then by \eqref{pallegrandi}
$$
\begin{aligned}
\int_G|k^{\infty}(x)|\di\rho(x)
&\lesssim \int_1^{\infty}
\int_{B(e\sqrt t)}  e^{-c't}   \,e^{-b|x|^2/t}    \di\rho(x)\di
t+\sum_{i=1}^{\infty} \int_1^{\infty} \int_{A^i_t  }  e^{-c't}
\,e^{-b|x|^2/t} \di\rho(x)\di t\\ 
&\lesssim \int_1^{\infty}  e^{-c't}\ e^{D \sqrt t}\di
t+\sum_{i=1}^{\infty} \int_1^{\infty} e^{-c't} e^{-b 2^{2j}}
e^{D2^i\sqrt t} \di t\\ 
&\lesssim 1+ \sum_{i=1}^{\infty} e^{-b
  2^{2i}+\frac{D^22^{2i}}{4c'}}\int_1^{\infty} e^{-( \sqrt{c'
    t}-\frac{D2^i}{2\sqrt c'} )^2   }\di t\\ 
&\lesssim 1\,,
\end{aligned}
$$ 
if $c$ is sufficiently large. 
Thus the convolution operator $f\mapsto f\ast k^{\infty}$ is bounded
on $L^p$ for every $p\in [1,\infty]$, and a fortiori it is bounded
from $\mathfrak h^1(\rho)$ to $L^1$ and from $L^{\infty}$ to
$\mathfrak{bmo}(\rho)$.  

We now consider the kernel $k^0$. Choose a smooth cutoff function
$\phi$ supported in $B_1$ such that $0\leq \phi\leq 1$. First notice
that   
$$
|(1-\phi(x))k^0(x)|\lesssim
\int_0^1t^{m/2-1}t^{-d/2-m/2}e^{-b|x|^2/t}\di
t=\int_{|x|^2}^{\infty} \Big(\frac{|x|^2}{u}\Big)^{-d/2}e^{-bu}\di
u\lesssim e^{-b' |x|^2}\,, 
$$
for some $b'>0$. Then
\begin{equation}\label{intk0}
\begin{aligned}
\int_G|(1-\phi(x))k^{0}(x)|\di\rho(x)&\lesssim  \sum_{i=1}^{\infty}
  \int_{A^i_1  }  e^{-b'|x|}     \di\rho(x) \\ 
&\lesssim \sum_{i=1}^{\infty}   e^{-b' 2^{2i}} e^{D2^i} \\
&\lesssim 1\,.
\end{aligned}
\end{equation}
Thus the convolution operator $f\mapsto f\ast (1-\phi)k^0$ is  bounded
on $L^p$ for every $p\in [1,\infty]$, and a fortiori it is bounded
from $\mathfrak h^1(\rho)$ to $L^1$ and from $L^{\infty}$ to
$\mathfrak{bmo}(\rho)$.  

 It remains to consider the operator $f\mapsto f\ast (\phi k^0)$ which
 is bounded on $L^2$, as difference of operators bounded on
 $L^2$. Let us denote by $\ell$ the function $\phi k^0$ and by
 $L$ the integral kernel corresponding to the convolution operator
 with kernel $\ell$, i.e. $L(x,y)=\ell(y^{-1}x)\delta(y)$. Notice that
$$
|\ell(x)|\lesssim \int_0^1t^{m/2-1}t^{-d/2-m/2}e^{-b|x|^2/t}\di
t=\int_{|x|^2}^{\infty} \Big(\frac{|x|^2}{u}\Big)^{-d/2}e^{-bu}\di
u\lesssim |x|^{-d}\,, 
$$
and, for every $j=1,\dots,q$,
\begin{equation}\label{Xjl}
\begin{aligned}
|X_j\ell(x)|&\lesssim |k^0(x)|+\int_0^1t^{m/2-1}t^{-d/2-m/2-1/2}e^{-b|x|^2/t}\di t\\
& \lesssim |x|^{-d}+|x|^{-d-1}\\
&\lesssim |x|^{-d-1}\,.
\end{aligned}
\end{equation}
Notice that since $\ell$ is supported in $B_1$ for every ball $B$ of radius $1$ and every $y\in B$
\begin{equation}\label{inthcpalle1}
\int_{(2B)^c}|L(y,x)|\di\rho(x)=\int_{(2B)^c}|L(x,y)|\di\rho(x)=0\,.
\end{equation}
Take now a ball $B=B(c_B,r_B)$ of radius $r_B\leq 1$. For every $y,z\in B$ we have 
$$
\begin{aligned}
\int_{(2B)^c}|L(y,x)-L(z,x)|\di\rho(x)&=\int_{(2B)^c}| \ell(x^{-1}y)\delta(x)  -   \ell(x^{-1}z)\delta(x) |\di\rho(x)\\
&=\int_{ \{x:d(x^{-1},c_B)>2r_B }| \ell(xy)  -   \ell(xz)  |   \di\rho(x)\\
&=\int_{|u|>2r_B}|\ell(uc_B^{-1}y)-\ell(uc_B^{-1}z)|\di\rho(u)\\
&=\int_{2r_B<|u|\leq 2}|\ell(uc_B^{-1}y)-\ell(uc_B^{-1}z)|\di\rho(u)\\ 
 &\lesssim d(c_B^{-1}y,c_B^{-1}z)\int_{2r_B\leq |u|\leq 2} \sum_{j=1}^q|X_j\ell(u)|\di\rho(u)\\
 &\lesssim r_B\,\int_{2r_B\leq |u|\leq 2} |u|^{-d-1}\di\rho(u)\,,
 \end{aligned}
$$ 
where we have applied \eqref{Xjl}. Choose $i_0$ as the biggest integer
such that $2^{i_0-1}\leq 2r_B$ and consider the  annuli
$A^i=B_{2^i}\setminus B_{2^{i-1}}$, with $i_0\leq i\leq 1$. By \eqref{pallepiccole} we
obtain  
\begin{equation}\label{inthcpallepiccole}
\begin{aligned}
\int_{(2B)^c}|L(y,x)-L(z,x)|\di\rho(x)&\lesssim r_B\,\sum_{i=i_0}^{1} \int_{A^i} |u|^{-d-1}\di\rho(u)\\
&\lesssim r_B\, \sum_{i=i_0}^{1}2^{-i(d+1)} 2^{id}\\
&\lesssim r_B\,2^{-i_0}\\
&\lesssim 1\,.
\end{aligned}
\end{equation}
By Propostion \ref{prp:hardy_hormander} the conditions \eqref{inthcpalle1} and \eqref{inthcpallepiccole} imply that $R^c_J$ is bounded from $L^{\infty}$ to
$\mathfrak{bmo}(\rho)$.  By Theorem \ref{thm:hardy_interpol} we
deduce that $R^c_J$ is bounded on $L^p$ for every $p\in
[2,\infty)$.  

We now prove that $f\mapsto f\ast k^0$ is bounded from
$\mathfrak h^1(\rho)$ to $L^1$. We denote by $K^0(x,y)$ the
corresponding integral kernel. For every ball $B$ of radius $1$ and every $y\in B$
by \eqref{intk0} we have  
\begin{equation}\label{inthcpalle1bis}
\int_{(2B)^c}|K^0(x,y)|\di\rho(x)=\int_{d(c_B,x)\geq
  2}\delta(y)\,|k^0(y^{-1}x)|\di\rho(x)\leq \int_{|v|\geq 1 }
|k^0(v)|\di\rho(v)\lesssim 1 \,. 
\end{equation}
Take now a ball $B=B(c_B,r_B)$ of radius $r_B\leq 1$. For every $y,z\in B$ we have 
$$
\begin{aligned}
\int_{(2B)^c}|K^0(x,y)-K^0(x,z)|\di\rho(x)&=\int_{2r_B\leq d(x,c_B)\leq 2}|K^0(x,y)-K^0(x,z)|\di\rho(x)\\
&\qquad +\int_{ d(x,c_B)>2} |K^0(x,y)-K^0(x,z)|\di\rho(x)\\
&=I+I\!I\,.
\end{aligned}
$$
By applying \eqref{intk0} we have 
$$
\begin{aligned}
I\!I&\leq \int_{ d(x,c_B)>2} |K^0(x,y)|\di\rho(x)+\int_{ d(x,c_B)>2} |K^0(x,z)|\di\rho(x)\\
&\leq  \delta(y)\int_{d(v,y^{-1}c_B)>
  2}|k^0(v)|\delta^{-1}(y)\di\rho(v)+\delta(z)\int_{d(v,z^{-1}c_B)>
  2}|k^0(v)|\delta^{-1}(z)\di\rho(v)\\ 
&\leq  2\int_{|v|>1}|k^0(v)|\di\rho(v)\\
&\lesssim 1\,.
\end{aligned}
$$
To estimate the integral $I$ we first decompose it as follows: 
$$
\begin{aligned}
I&\leq \int_0^1t^{m/2-1}e^{-ct}\int_{2r_B\leq d(x,c_B)\leq 2}|
X_{J,x}P_t(x,y)-X_{J,x}P_t(x,z)    |\di\rho(x)\\ 
&=\int_0^{r_B^2}t^{m/2-1}e^{-ct}\int_{2r_B\leq d(x,c_B)\leq 2}|
X_{J,x}P_t(x,y)-X_{J,x}P_t(x,z)    |\di\rho(x)\\ 
&\qquad +\int_{r_B^2}^1t^{m/2-1}e^{-ct}\int_{2r_B\leq d(x,c_B)\leq
  2}| X_{J,x}P_t(x,y)-X_{J,x}P_t(x,z)    |\di\rho(x)\\ 
&=I_1+I_2\,.
\end{aligned}
$$
Since $P_t(x,y)=\delta(y)p_t(y^{-1}x)$, we have
$X_{J,x}P_t(x,y)=\delta(y) (X^Jp_t)(y^{-1}x)$, 
so that by \eqref{stimeXJpt}
$$
|X_{J,x}P_t(x,y)|\lesssim \delta(y)\,t^{-d/2-m/2}e^{\omega t}e^{-b|y^{-1}x|^2/t}\,. 
$$ 
We then have
$$
\begin{aligned}
I_1
&\lesssim \int_0^{r_B^2} t^{-1-d/2} \int_{2r_B\leq |c_B^{-1}x|\leq 2} \delta(y)e^{-b|y^{-1}x|^2/t}\di\rho(x)\\
&\lesssim \int_0^{r_B^2} t^{-1-d/2}\delta(c_B) \int_{2r_B\leq |v|\leq 2}e^{-b|v|^2/t}  \delta^{-1}(c_Bv)  \di\lambda(v)\\
&\lesssim  \int_{2r_B\leq |v|\leq 2} \int_{|v|^2/r_B^2}^{\infty}e^{-bs} \bigg(\frac{s}{|v|^2}\bigg)^{1+d/2} \frac{|v|^2}{s^2}\di s\di\rho(v)\\
&\lesssim  \int_{2r_B\leq |v|\leq 2} |v|^{-d}  \int_{|v|^2/r_B^2}^{\infty}e^{-b's}\di s\di\rho(v)\\
&\lesssim \int_{2r_B\leq |v|\leq 2} |v|^{-d}   e^{-b'\frac{|v|^2}{r_B^2}} \di\rho(v)\,.
\end{aligned}
$$
To estimate the last integral we split the domain of integration as
the union $\cup_{j=j_0}^1 A_j$, where $A_j=B_{2^j}\setminus
B_{2^{j-1}}$, where $j_0$ is the  largest  integer such that
$2^{j_0-1}\leq 2r_B$ and obtain 
$$
\int_{2r_B\leq |v|\leq 2} |v|^{-d}   e^{-b'\frac{|v|^2}{r_B^2}}
\di\rho(v)
 =\sum_{j=j_0}^1 2^{-dj} \,  e^{-b'\frac{2^{2j}}{r_B^2}  }
\,2^{dj}\lesssim 1\,.
$$
It remains to estimate the integral 
$$I_2=\int_{r_B^2}^1 t^{m/2-1}e^{-ct}\int_{2r_B\leq |c_B^{-1}x|\leq 2}|X_Jq_t(x)|\di\rho(x)\di t\,,
$$
where $q_t(x)=P_t(x,y)-P_t(x,z)$. By applying the Cauchy--Schwarz inequality 
$$
\begin{aligned}
\int_{2r_B\leq |c_B^{-1}x|\leq 2}|X_Jq_t(x)|\di\rho(x)&\leq
\Big(\int_{2r_B\leq |c_B^{-1}x|\leq 2}
|X_Jq_t(x)|^2e^{\beta|c_B^{-1}x|^2/t}\di\rho(x)  \Big)^{1/2}\\ 
&\qquad
\times \Big(  \int_{2r_B\leq |c_B^{-1}x|\leq 2}
e^{-\beta|c_B^{-1}x|^2/t}\di\rho(x)  \Big)^{1/2}\\ 
&=A_t\cdot B_t\,,
\end{aligned}
$$
with a constant $\beta$ such that $0<\beta<2c_5$, where $c_5$ is the constant which appears in Lemma \ref{q_t}. 

To estimate $B_t$, when $2r_B\leq \sqrt t$ we choose $j_0$ as the
smallest integer such that $2^{j_0+1}\sqrt t\geq 2$ and write   
$$
\begin{aligned}
(B_t)^2&\leq \delta^{-1}(c_B)\int_{2r_B\leq |v|\leq 2}e^{-\beta |v|^2/t}\di\lambda\\
&=\delta^{-1}(c_B)\Big(  \int_{2r_B\leq |v|\leq \sqrt t}e^{-\beta
  |v|^2/t}\di\lambda
+\sum_{j=1}^{j_0}  \int_{2^j\sqrt t\leq |v|\leq 2^{j+1}\sqrt t}e^{-\beta |v|^2/t}\di\lambda\Big)\\
&\lesssim  \delta^{-1}(c_B)\Big(  V(\sqrt t)e^{-\beta r_B^2/t}+\sum_{j=1}^{j_0}  (2^j\sqrt t)^de^{-\beta 2^j}\Big)\\
&\lesssim  \delta^{-1}(c_B)   V(\sqrt t)e^{-\beta r_B^2/t}\,.
\end{aligned}
$$
When $\sqrt t\leq 2r_B$ we argue in a similar way and obtain
$(B_t)^2\lesssim \delta^{-1}(c_B)   V(\sqrt t)e^{-\beta r_B^2/t}$. By
the previous estimate and Lemma \ref{q_t} we get 
\begin{multline*}
\int_{2r_B\leq |c_B^{-1}x|\leq 2}|X_Jq_t(x)|\di\rho(x) \\
\lesssim 
\delta(c_B)^{1/2}t^{-m/2}\,V(\sqrt t)^{-1/2}\delta^{-1/2}(c_B)
V(\sqrt t)^{1/2}e^{-\beta r_B^2/2t}\,\Big(\frac{r_B}{\sqrt t} \Big)^{\gamma}\,,
\end{multline*}
so that
$$
I_2\lesssim \int_{r_B^2}^1 t^{m/2-1}e^{-ct}  t^{-m/2}\,e^{-\beta
  r_B^2/2t}\Big(\frac{r_B}{\sqrt t} \Big)^{\gamma} \di t\lesssim 1\,.
$$
This shows that for every ball $B$ of radius $r_B\leq 1$ and every points $y,z\in B$ 
\begin{equation}\label{inthcpallepiccolebis}
\int_{(2B)^c}|K^0(x,y)-K^0(x,z)|\di\rho(x)\lesssim 1\,,
\end{equation}
which together with condition \eqref{inthcpalle1bis} implies that the integral
operator with integral kernel $K^0$ is bounded from $\mathfrak h^1(\rho)$ to
$L^1$. By Theorem \ref{thm:hardy_interpol} we deduce that $R^c_J$ is bounded on $L^p$ for every $p\in (1,2]$. 
\qed
\medskip

As a corollary  of Theorem \ref{local-Riesz-trans-Lp} we have the
following result involving Sobolev spaces.
\begin{corollary}\label{sobolevspaces}
For every $p\in (1,\infty)$ the following hold:
\begin{itemize}
\item[(i)] if $\ell\in\mathbb N$, then $f\in L^p_{\ell}$ if and only
     if $X_Jf\in L^p$ for every $J\in \{1,\dots,q\}^m$ with $m\leq \ell$
  and  
$$
\|f\|_{\ell,p}\approx\sum_{J\in \{1,\dots,q\}^m,\, m\le\ell} \|X_Jf\|_{p}\,.
$$  
\item[(ii)] for every $\alpha\geq 0$, $f\in L^p_{\alpha+1}$ if and only if $f\in L^p_{\alpha}$
  and $X_if\in L^p_{\alpha}$, $i=1,\dots,q$, and 
$$
\|f\|_{\alpha+1,p}\approx \|f\|_{\alpha,p}+\sum_{i=1}^q\|X_if\|_{\alpha,p}\,.
$$
\item[(iii)] for every 
$p\in (1,\infty)$, $\alpha\geq 0$ and $c>0$, 
$$
\|f\|_{\alpha,p}\approx \|(I+\Delta)^{\alpha/2}f\|_p\approx
\|(cI+\Delta)^{\alpha/2}f\|_p \qquad \forall f\in L^p_{\alpha}\,. 
$$
\end{itemize}
\end{corollary}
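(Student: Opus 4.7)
The plan is to prove (iii) first from functional-calculus considerations, then derive (i) and (ii) using Theorem \ref{local-Riesz-trans-Lp} together with a useful factorization of $(cI+\Delta)^{1/2}$.

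The starting point for (iii) is that $(cI+\Delta)^{-s}$ is bounded on $L^p$ with operator norm at most $c^{-s}$ for every $s,c>0$: this follows from the subordination formula
\[
(cI+\Delta)^{-s} f = \frac{1}{\Gamma(s)} \int_0^{\infty} t^{s-1} e^{-ct}\, T_t f \di t
\]
combined with the $L^p$-contractivity of the symmetric diffusion semigroup $T_t$. This bound, together with the binomial expansion of $((1+\lambda)/(c+\lambda))^{\alpha/2}$ as an absolutely convergent series in powers of $(c+\lambda)^{-1}$ (valid when $c$ is close to $1$, with general $c$ handled by chaining through intermediate values), shows that $(I+\Delta)^{\alpha/2}(cI+\Delta)^{-\alpha/2}$ and its inverse are bounded on $L^p$, yielding $\|(I+\Delta)^{\alpha/2}f\|_p \approx \|(cI+\Delta)^{\alpha/2}f\|_p$. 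To compare with $\|f\|_p + \|\Delta^{\alpha/2}f\|_p$, I would fix a cutoff $\phi \in C_c^{\infty}([0,2])$ with $\phi \equiv 1$ on $[0,1]$ and decompose $(I+\Delta)^{\alpha/2} = \phi(\Delta)(I+\Delta)^{\alpha/2} + (1-\phi)(\Delta)(I+\Delta)^{\alpha/2}$: the first piece is a compactly supported smooth multiplier of $\Delta$ (bounded on $L^p$), while the second equals $\psi(\Delta)\Delta^{\alpha/2}$ with $\psi(\lambda) = (1-\phi(\lambda))((1+\lambda)/\lambda)^{\alpha/2}$ a bounded smooth multiplier of $[0,\infty)$, hence $L^p$-bounded by the same functional-calculus machinery.

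For (i), fix $c$ large enough for Theorem \ref{local-Riesz-trans-Lp} to apply. For any multiindex $J$ with $|J|=m\le\ell$, Theorem \ref{local-Riesz-trans-Lp}, the $L^p$-boundedness of $(cI+\Delta)^{(m-\ell)/2}$, and (iii) yield
\[
\|X_J f\|_p = \|R^c_J (cI+\Delta)^{m/2} f\|_p \lesssim \|(cI+\Delta)^{m/2} f\|_p \lesssim \|(cI+\Delta)^{\ell/2} f\|_p \approx \|f\|_{\ell,p}.
\]
For the reverse inequality, I would handle even $\ell$ by expanding $\Delta^{\ell/2}$ algebraically as a polynomial in the $X_i$ of total degree $\ell$; for odd $\ell$, I would use the algebraic identity
\[
(cI+\Delta)^{1/2} g = c(cI+\Delta)^{-1/2} g - \sum_{i=1}^q (cI+\Delta)^{-1/2} X_i\,(X_i g),
\]
verified by applying $(cI+\Delta)^{1/2}$ to both sides and using $\Delta=-\sum_i X_i^2$. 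The dual Riesz transforms $(cI+\Delta)^{-1/2}X_i$, being the $L^2(\rho)$-adjoints of $R^c_i$ up to sign (since $X_i^* = -X_i$ for left-invariant vector fields and right Haar measure), are bounded on $L^p$ for $p\in(1,\infty)$ by duality from Theorem \ref{local-Riesz-trans-Lp}. Taking $g=\Delta^{(\ell-1)/2}f$ in the identity reduces odd $\ell$ to the even case.

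Finally, (ii) reduces via (iii) to showing
\[
\|(cI+\Delta)^{(\alpha+1)/2}f\|_p \approx \|(cI+\Delta)^{\alpha/2}f\|_p + \sum_i \|(cI+\Delta)^{\alpha/2}X_i f\|_p.
\]
At integer $\alpha=n$ both inequalities follow from (i) applied at $\ell=n$ and $\ell=n+1$, after noting that multiindices of length at most $n+1$ factor as $X_J = X_K X_i$ with $|K|\le n$. For non-integer $\alpha$ I would invoke complex interpolation: the scale $\{L^p_\alpha\}_{\alpha\ge 0}$ is obtained from the integer endpoints by Bessel-potential interpolation (using that $(I+\Delta)^{-s}$ is an isomorphism $L^p_\alpha \to L^p_{\alpha+s}$ by (iii)), and the linear map $f\mapsto(f,X_1f,\ldots,X_qf)$ along with its natural left inverse (built from the identity above) are bounded at both integer endpoints, hence by interpolation at all intermediate $\alpha$. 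The main technical obstacle is the noncommutativity of $X_i$ with the fractional power $(cI+\Delta)^{\alpha/2}$, which prevents a direct pointwise commutator computation and forces the bootstrap via the algebraic identity and interpolation.
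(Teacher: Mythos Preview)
Your approach differs from the paper's, which is extremely brief and defers entirely to the literature: (i) to \cite[Theorem 5.14]{R}, (ii) to \cite[Proposition 19]{CRTN}, and (iii) to Komatsu's abstract theory of fractional powers \cite[Theorem 6.4]{K1} combined with \cite[Propositions 3.16, 4.1]{F}. Your arguments for (i) and (ii) are essentially sound and likely parallel what those references do. In particular, the retraction $S(g_0,\ldots,g_q)=c(cI+\Delta)^{-1}g_0-\sum_i(cI+\Delta)^{-1}X_ig_i$ is a genuine left inverse of $T$, and one checks it is bounded from $(L^p_n)^{q+1}$ to $L^p_{n+1}$ at every integer $n$ (using the dual Riesz bound for $(cI+\Delta)^{-1/2}X_i$ when $n=0$, and (i) for $n\ge1$), so interpolation does give (ii) once the identification $[L^p_n,L^p_{n+1}]_\theta=L^p_{n+\theta}$ is available.

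There is, however, a genuine gap in your argument for (iii). The equivalence between different values of $c>0$ via the binomial series is correct, but your comparison $\|(I+\Delta)^{\alpha/2}f\|_p\approx\|f\|_p+\|\Delta^{\alpha/2}f\|_p$ asserts that $\phi(\Delta)(I+\Delta)^{\alpha/2}$ and $\psi(\Delta)$ are bounded on $L^p$ ``by the same functional-calculus machinery''. The only tool you have actually established is the subordination formula for \emph{negative} powers $(cI+\Delta)^{-s}$, which relies on the contractivity of $T_t$ and says nothing about general bounded smooth multipliers. A compactly supported smooth function cannot extend to a bounded holomorphic function on any sector, so the $H^\infty$ calculus does not help; what you would need is a H\"ormander-type spectral multiplier theorem for $\Delta$ on $L^p(\rho)$, which on nonunimodular groups is a deep result in its own right and is not part of the paper's toolkit. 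The paper avoids this entirely: Komatsu's abstract result gives $\operatorname{dom}(\Delta_p^{\alpha/2})=\operatorname{dom}((I+\Delta_p)^{\alpha/2})$ with equivalent graph norms for any generator of a bounded $C_0$-semigroup, without any multiplier theorem. Since you place (iii) first and use it in both (i) and (ii), this gap propagates; you should either invoke Komatsu's result directly or find a route that stays within resolvent and semigroup bounds.
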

\begin{proof}
Statement (i) follows from the $L^p$-boundedness of local Riesz
transforms $R_J^c$ arguing as in \cite[Theorem
5.14]{R}. Statement (ii) has the same proof as \cite[Proposition
19]{CRTN}. 

To prove (iii) we observe that by \cite[Theorem 6.4]{K1} 
$L^p_\alpha=\operatorname{dom} \big( (I+\Delta)^{\alpha/2}\big)$, and then arguing
as in
 \cite[Propositions 3.16, 4.1]{F}, the equivalence of norms on the
 left
follows.  The one on the right now follows at once.
\end{proof}

We are now in the position to show that on a general nonunimodular
group $G$, the property that $L^p_\alpha(G)$ is an algebra when the
product $\alpha p$ is sufficiently large, cannot hold true.  We recall
that when $G$ is a nilpotent Lie group of homogeneuous dimension $Q$,
and $\Delta$ is a 
subLaplacian, then $L^p_\alpha(G)$ is an algebra provide $\alpha p>Q$,
see \cite{B} and the earlier paper \cite{Strichartz-multi} for the
case of $\R^n$.  The counterexample appears in the case of the  ``$ax+b$-group''.
Precisely, let
$G = \R \ltimes \R_+$, with product
given by $(x,a)(x',a')=(x+ax',aa')$.  Then, 
the right Haar measure is $\di \rho(a,x) = a^{-1} \di a \di x$, 
$\delta(x,a)=a^{-1}$ and
a basis for the
left invariant vector fields is $\{X_0,X_1\}$, where 
$X_0 = a\partial_a$ and $X_1= a\partial_x$.  Then, we have the
following result.
\begin{theorem}\label{contro-esempio}
Let $G=\R \ltimes \R_+$.
Then, for every $p\in(1,+\infty)$ and $k$ positive integer, the Sobolev
space $L^p_k(G)$ is not an algebra.
\end{theorem}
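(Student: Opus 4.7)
The plan is to exhibit one function $f$ belonging to $L^p_k$ for every $k\geq 1$ while simultaneously $f^2\notin L^p(\rho)$; since $L^p_k\subset L^p$, this forces $f^2\notin L^p_k$ and proves that $L^p_k$ is not an algebra. By Corollary \ref{sobolevspaces}(i) it suffices to verify $X_Jf\in L^p$ for every multi-index $J$ of length at most $k$. I would reparametrize $G=\R\ltimes\R_+$ by setting $\ell=\log(1/a)$, so that $(x,\ell)\in\R\times\R$, the right Haar measure becomes Lebesgue measure $\di x\,\di\ell$, and the generating vector fields take the particularly convenient form $X_0=-\partial_\ell$ and $X_1=e^{-\ell}\partial_x$, with commutation relation $[X_0,X_1]=X_1$.

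Next, I would fix $p\in(1,\infty)$, set $\mu=3/(2p)$ and $\nu=3$, and choose a smooth bump $\chi\in C_c^\infty(\R)$ supported in $[-1,1]$ with $\chi(0)=1$, together with a smooth cutoff $\eta\in C^\infty(\R)$ vanishing on $(-\infty,1]$ and equal to $1$ on $[2,\infty)$. Then define
$$
f(x,\ell)=\eta(\ell)\,\ell^{\mu}\,\chi(x\ell^{\nu}).
$$
On $\ell\geq 2$ this function has height $\sim\ell^\mu$ concentrated in a slab of width $\sim\ell^{-\nu}$ in the $x$-direction, so a direct change of variables in $x$ yields
$$
\|f\|_p^p\approx \int_2^\infty \ell^{\mu p-\nu}\,\di\ell<\infty, \qquad \|f\|_{2p}^{2p}\approx \int_2^\infty \ell^{2\mu p-\nu}\,\di\ell=\infty,
$$
since $\mu p-\nu=-3/2<-1$ while $2\mu p-\nu=0$. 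In particular $f\in L^p(\rho)$ but $f^2\notin L^p(\rho)$.

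To conclude, it remains to verify $X_Jf\in L^p$ for every $J$. The identity $[X_0,X_1]=X_1$ lets me rewrite any $X_J$ as a linear combination of monomials $X_1^jX_0^i$ with $i+j\leq|J|$, and the formula $X_1^jg=e^{-j\ell}\partial_x^jg$ (valid because $e^{-\ell}$ is $x$-independent) shows that each term involving at least one $X_1$ lies in $L^p$ trivially thanks to the decaying factor $e^{-j\ell}$. The only nontrivial case is $X_0^if=(-\partial_\ell)^if$; I would prove by induction, using Leibniz and Fa\`a di Bruno together with the support constraint $|x|\lesssim\ell^{-\nu}$, that $|X_0^if(x,\ell)|\lesssim \ell^{\mu-i}$ on $\mathrm{supp}\,f\cap\{\ell\geq 2\}$. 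Integrating against the $x$-width $\ell^{-\nu}$ then gives $\|X_0^if\|_p^p\lesssim \int_1^\infty\ell^{(\mu-i)p-\nu}\,\di\ell<\infty$, since $\mu p-ip\leq \mu p=3/2<2=\nu-1$. The main technical obstacle is precisely this inductive pointwise estimate: each differentiation of $\chi(x\ell^\nu)$ in $\ell$ potentially produces a chain-rule factor $\ell^{\nu-1}$, but this is always paired with a power of $x$ controlled by $|x|\lesssim\ell^{-\nu}$, yielding a net gain of $\ell^{-1}$ per derivative. It is this cancellation that lets a single $f$ serve as a counterexample simultaneously for every $k$.
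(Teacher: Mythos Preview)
Your proof is correct and follows the same strategy as the paper: build an explicit function concentrated on a slab that shrinks in the $x$-direction, with height tuned so that $f\in L^p$ but $f^2\notin L^p$, and then check via Corollary~\ref{sobolevspaces}(i) that all $X_J f$ remain in $L^p$. The difference is purely in the choice of profile. The paper works in the original coordinate $a$ and takes $g(x,a)=\psi(x/a^{r})\chi(a)a^{-\gamma}$ with $0<r<1$ and $r/(2p)<\gamma<r/p$; in your logarithmic coordinate $\ell=\log(1/a)$ this is $\tilde\eta(\ell)\,e^{\gamma\ell}\,\psi(xe^{r\ell})$, i.e.\ \emph{exponential} height and width in $\ell$. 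You instead take polynomial height and width in $\ell$. The paper's choice has the pleasant feature that $X_0$ leaves the shape function in the same class ($a\partial_a\psi(x/a^r)=\widetilde\psi(x/a^r)$), so the only thing to track is the power of $a$; your choice requires the extra cancellation you describe (the chain-rule factor $\ell^{\nu-1}$ is always paired with an $x$ bounded by $\ell^{-\nu}$), but in return you get the clean closed form $X_0^i f=\ell^{\mu-i}\chi_i(x\ell^\nu)$ on $\ell\ge2$, which makes the induction transparent. Both constructions yield a single function that works simultaneously for every $k$.
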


\begin{proof}
Let $\psi$ be a nonnegative function in $C^\infty_c (0,1)$ such that $\psi=1$ on $[1/4,3/4]$, and  $\chi$ a nonnegative function in $C^\infty_c
 (-1,1)$ such that $\chi=1$ on $[0,1/2]$.  For $\gamma,r>0$ 
define 
\begin{equation}\label{g-controesem-def}
g(x,a) = \psi(x/a^r) \chi(a) a^{-\gamma} \,.
\end{equation}
We claim that if $\gamma$ and $r$ satisfy the condition
\begin{equation}\label{cond-gamma-r}
\frac{r}{2p}  <\gamma < \frac{r}{p} + k_1(1-r) \,,\qquad \text{for\ } k_1=0,1,\dots,k,
\end{equation}
then 
 $g\in L^p_k$, but $g^2\not\in L^p$.  Note that condition  \eqref{cond-gamma-r} is
satisfied
by
any pair $r,\gamma$ with $0<r<1$ and $r/(2p)<\gamma<r/p$.

By Corollary \ref{sobolevspaces} (i), in order to show that $g\in
L^p_k$, since $\{X_0,X_1\}$ is a basis, we need to show that
$X_0^{k_0}X_1^{k_1}g \in L^p$ when
$0\le k_0+k_1\le k$. Using induction, it is easy to check that 
\begin{itemize}
\item[{\tiny$\bullet$}] $X_1^{k_1}\psi(x/a^r )= a^{k_1-rk_1}\widetilde\psi(x/a^r)$ for
another   $\widetilde\psi\in C^\infty_c  (0,1)$; \smallskip
\item[{\tiny$\bullet$}] if $\widetilde\psi\in C^\infty_c  (0,1)$  and $j\in\mathbb N$, then
$X_0^j \widetilde\psi(x/a^r)=\widetilde\psi_1 (x/a^r)$, for
another   $\widetilde\psi_1\in C^\infty_c  (0,1)$;\smallskip
\item[{\tiny$\bullet$}] $X_0^j a^{-q} = c a^{-q}$, for some constant
  $c$, for all $q>0$ and $j\in\mathbb N$;\smallskip
\item[{\tiny$\bullet$}] if any derivative falls on $\chi$, the
  resulting term is of the form $\Psi\in C^\infty_c(G)$. 
\end{itemize}
Therefore, $X_0^{k_0}X_1^{k_1}g $ is sum of terms of the form
$$
\widetilde\psi(x/a^r) \chi(a) a^{k_1-k_1r-\gamma} +\widetilde\Psi(a,x)
$$
for some $\widetilde\psi\in C^\infty_c  (0,1)$ and $\widetilde\Psi\in
C^\infty_c(G)$.  Thus, $X_0^{k_0}X_1^{k_1}g \in L^p$ if 
\begin{align*}
\int_G |
\widetilde\psi(x/a^r) \chi(a) a^{k_1(1-r)-\gamma} |^p   \frac{\di a
  \di x }{a}
& \lesssim \int_0^1 a^{k_1(1-r)p-\gamma p+r-1} \di a <+\infty\,, 
\end{align*} 
which is the case if and only if $k_1(1-r)p-\gamma p+r>0$; which is
the inequality on the right of
\eqref{cond-gamma-r}.   

On the other hand, 
\begin{align*}
\| g^2\|_p^p
& \approx \int_0^{1/2} a^{-2\gamma p+r-1} \di a\,,
\end{align*}
which is infinite if $\gamma >r/(2p)$.
\end{proof}

\begin{remark}\label{contro-eempio-rem}
{\rm In \cite{V2}  Varopoulos showed that on a Lie group $G$,  $L^p_1$ continuously embeds in
  $L^q(\delta^s)$ if $1\le p\le q<\infty$ and
$s=1-\frac pq$.   
We note that the function $g$ constructed in the theorem
is in $L^p_1$  for $\gamma,r$
  satisfying \eqref{cond-gamma-r}, while, on the other hand $g\not\in
  L^\infty$,  as it is easy to check.     
Thus, this function also shows that the Sobolev embedding
  theorem cannot hold at the limiting point $q=\infty$ and the modular
  function $\delta$ appears in a natural way  in the Sobolev
  embeddings when the group is nonunimodular --- see also \cite{PV}. 
}
\end{remark}

\section{Sobolev norms in the case $\alpha\in (0,1)$}\label{secalpha01}

We shall give two representation formulas for the Sobolev norms when $\alpha\in (0,1)$.  
 
\subsection{A representation formula for the Sobolev norm in terms of $S^{{\rm loc}}_{\alpha}$}
Recall that in \eqref{S-alpha-loc-def} we  have defined the quantity 
$S^{{\rm loc}}_{\alpha}f$.  We now prove Theorem \ref{Salphaloc} (i).
 
 \proof[Proof of Theorem \ref{Salphaloc} (i)]{ \emph{STEP I}.} We shall prove that  
\begin{equation}\label{claim1}
\|\Delta^{\alpha/2} f\|_p
\lesssim   \|S^{{\rm loc}}_{\alpha}f\|_p+\|f\|_p \qquad \forall f\in L^p_{\alpha}\,. 
\end{equation}
We observe that  
$$
\begin{aligned}
\big(g_{1-\alpha/2}\Delta^{\alpha/2}f(x)\big)^2&=\int_0^{\infty}t^{1-\alpha}|\Delta
T_tf(x)|^2\,\di t\\  
&=\int_0^1t^{1-\alpha}   \big| \Delta
T_tf(x)\big|^2  \,\di t+\int_1^{\infty}t^{1-\alpha}   
\big| \Delta T_tf(x)\big|^2  \,\di t\\ 
&=: \CB  \big( g_{1-\alpha/2,0}\Delta^{\alpha/2}f(x)\big)^2
+\big(g_{1-\alpha/2,\infty}\Delta^{\alpha/2}f(x)\big)^2 \,. 
\end{aligned}
$$ 
Notice that  
$$
\big(g_{1-\alpha/2,\infty}\Delta^{\alpha/2}f(x)\big)^2\leq \CB 
\int_1^{\infty}|t\Delta T_tf(x)|^2\,\frac{\di t}{t}\leq \big(g_1f(x)
\big)^2\,,
$$
so that by \eqref{galphap}
$$
\|g_{1-\alpha/2,\infty}\Delta^{\alpha/2}f\|_p\leq \|g_1f\|_p\lesssim \|f\|_p\,.
$$
To estimate $g_{1-\alpha/2,0}\Delta^{\alpha/2}f$ we first notice that
for every $t\in (0,1)$ and $x\in G$, since $\frac{\partial}{\partial
  t}\int_Gp_t\di\rho =0$, we have 
$$
\begin{aligned}
\big| \Delta T_tf(x)\big| & = 
\Big| \frac{\partial}{\partial t}T_tf(x) \Big|=\Big| \frac{\partial
}{\partial t} \Big( \int_Gf(xy^{-1})p_t(y)\di \rho(y)- \int_Gf(x) p_t(y)\di
\rho(y)  \Big)\Big|\\ 
&   \leq    \int_G|f(xy^{-1})-f(x)|  \Big|\frac{\partial
  p_t(y)}{\partial t}\Big| \di \rho(y)\,. 
\end{aligned}
$$ 
Using estimate (iii) in Subsection \ref{pt} for the derivative of the
heat kernel and Cauchy--Schwarz's inequality we have 
 $$
\begin{aligned}
&\big(g_{1-\alpha/2,0}\Delta^{\alpha/2}f(x)\big)^2\\&\lesssim \int_0^1 t^{1-\alpha-2}  V(\sqrt t)^{-2}
\Big(\int_{|y|<\sqrt t}|f(xy^{-1})-f(x)| \delta^{1/2}(y)  e^{-c_3|y|^2/t} \di \rho(y)\Big)^2 \di t\\ 
&\quad +\sum _{k=0}^{\infty} \int_0^1 t^{1-\alpha-2} V(\sqrt
t)^{-2}\Big( 
\int_{ 2^k <|y|<2^{k+1}\sqrt t } |f(xy^{-1})-f(x)|
\delta^{1/2}(y)   \,  e^{-c_3|y|^2/t}  \di \rho(y)\Big)^2 \di t\\
&\lesssim  \int_0^1 t^{1-\alpha-2}  V(\sqrt t)^{-2}
\Big(\int_{|y|<\sqrt t}|f(xy^{-1})-f(x)|   \di \rho(y)\Big)^2 \di t\\ 
&\quad +\sum _{k=0}^{\infty}e^{-c'2^{2k}}\int_0^1 t^{1-\alpha-2} V(\sqrt
t)^{-2}\Big(\int_{|y|<2^{k+1}\sqrt t}|f(xy^{-1})-f(x)|
\delta^{1/2}(y)\di \rho(y)\Big)^2 \di t\,. 
\end{aligned}
$$
By the change of variables $u=2^{k+1}\sqrt t$ we obtain
$$
\begin{aligned}
&\big(g_{1-\alpha/2,0}\Delta^{\alpha/2}f(x)\big)^2\\  
&\lesssim   \int_0^1 \frac{1}{u^{1+2\alpha}V(u)^2}\Big(\int_{|y|<
  u}|f(xy^{-1})-f(x)|  \di \rho(y)\Big)^2 \di u\\ 
&\quad +\sum _{k=0}^{\infty}e^{-c'2^{2k}}\int_0^{2^{k+1}}
\frac{1}{(2^{-k-1})^{2\alpha}  u^{1+2\alpha}V(2^{-k-1}u)^2  }
\Big(\int_{|y|<u}|f(xy^{-1})-f(x)|  \delta^{1/2}(y)\di \rho(y)\Big)^2
\di u\\ 
&\lesssim \big(S^{{\rm loc}}_{\alpha}f(x)\big)^2\\
&\quad +\sum _{k=0}^{\infty}e^{-c'2^{2k}}\int_0^{1}
\frac{1}{(2^{-k-1})^{2\alpha}  u^{1+2\alpha}V(2^{-k-1}u)^2  }
\Big(\int_{|y|<u}|f(xy^{-1})-f(x)|   \di \rho(y)\Big)^2 \di u\\ 
&\quad +\sum _{k=0}^{\infty}e^{-c'2^{2k}}\int_1^{2^{k+1}}
\frac{1}{(2^{-k-1})^{2\alpha}  u^{1+2\alpha}V(2^{-k-1}u)^2  }
\Big(\int_{|y|<u}|f(xy^{-1})-f(x)|  \delta^{1/2}(y)\di \rho(y)\Big)^2
\di u\,.
\end{aligned}
$$
By \eqref{S-alpha-loc-def} and formula \eqref{pallepiccole} we obtain that
$$
\begin{aligned}
&\big(g_{1-\alpha/2,0}\Delta^{\alpha/2}f(x)\big)^2\\ 
&\lesssim \big(S^{{\rm loc}}_{\alpha}f(x)\big)^2+
\big(S^{{\rm loc}}_{\alpha}f(x)\big)^2 \sum _{k=0}^{\infty}e^{-c'2^{2k}}
(2^{k+1})^{2\alpha+2d} \\ 
&\quad +\sum _{k=0}^{\infty}e^{-c'2^{2k}}   ( 2^{k+1})^{2\alpha+2d}
\int_1^{2^{k+1}}  \frac{1}{  u^{1+2\alpha+2d}  }  |f(x)|^2
\Big(\int_{|y|<u}  \delta^{1/2}(y)\di \rho(y)\Big)^2 \di u\\ 
&\quad +\sum _{k=0}^{\infty}e^{-c'2^{2k}}   ( 2^{k+1})^{2\alpha+2d}
\int_1^{2^{k+1}}  \frac{1}{  u^{1+2\alpha+2d}  }   \Big(\int_{|y|<u}
|f(xy^{-1})|  \delta^{1/2}(y)\di \rho(y)\Big)^2 \di u\\ 
&\lesssim \big(S^{{\rm loc}}_{\alpha}f(x)\big)^2 +\sum
_{k=0}^{\infty}J_k(x)+\sum _{k=0}^{\infty}I_k(x)\,.
\end{aligned}
$$
By \eqref{intdelta12} we deduce that
\begin{equation}\label{Jk}
\begin{aligned}
J_k(x)&\lesssim e^{-c'2^{2k}}   ( 2^{k+1})^{2\alpha+2d}   \int_1^{2^{k+1}}  \frac{1}{  u^{1+2\alpha+2d}  }  |f(x)|^2   u^{2d}e^{2Q u} \di u\\
&\lesssim |f(x)|^2   e^{-c'2^{2k}}   ( 2^{k+1})^{2\alpha+2d}  e^{2Q 2^k}\,,
\end{aligned}
\end{equation}
so that $\sum _{k=0}^{\infty}J_k(x)\lesssim |f(x)|^2  $. We now notice that there exists $c''>0$ such that
$$
\Big\| \Big( \sum_{k=0}^{\infty}I_k\Big)^{1/2}\Big\|_p\lesssim \sum_{k=0}^{\infty} e^{-c''2^{2k}}\Big\| \Big(   \int_1^{2^{k+1}}   \Big(\int_{|y|<u}
|f(xy^{-1})|  \delta^{1/2}(y)\di \rho(y)\Big)^2         \Big)^{1/2}      \di u   \Big\|_p\,.
$$
For every integer $k$, by Minkowski inequality, we get
$$
\begin{aligned}
&\Big(   \int_1^{2^{k+1}}     \Big(\int_G
|f(xy^{-1})| \chi_{B_u}(y)  \delta^{1/2}(y)\di \rho(y)\Big)^2      \di u   \Big)^{1/2} \\&\lesssim 
\int_G \Big(  \int_1^{2^{k+1}}|f(xy^{-1})|^2\chi_{B_u}(y)\delta(y)\di u \Big)^{1/2}\di\rho(y)\\
&\lesssim \int_{B_1}|f(xy^{-1})|\delta^{1/2}(y)   \Big(  \int_1^{2^{k+1}} \di u \Big)^{1/2} \di\rho(y)\\
&\qquad +\int_{1\leq |y|\leq 2^{k+1}}|f(xy^{-1})|\delta^{1/2}(y)  \Big(  \int_{|y|}^{2^{k+1}}\di u \Big)^{1/2}\di\rho(y)\\
&\lesssim \int_{B_1}|f(xz)| 2^{k/2}\di\lambda(z)+\int_{1\leq |z|\leq 2^{k+1}}|f(xz)|\delta^{1/2}(z^{-1})2^{k/2}\di\lambda(z)\,.
\end{aligned}
$$
We then obtain, by applying once again Minkowski inequality, 
$$
\begin{aligned}
 &\Big\|   \Big(   \int_1^{2^{k+1}}    \Big(\int_G
|f(xy^{-1})| \chi_{B_u}(y)  \delta^{1/2}(y)\di \rho(y)\Big)^2       \di u    \Big)^{1/2}      \Big\|_p\\
&\lesssim 2^{k/2}\,  \int_{B_1}    \Big(\int_G|f(xz)|^p \di\rho(x)\Big)^{1/p}  \di\lambda(z)+ 2^{k/2}\,    \int_{1\leq |z|\leq 2^{k+1}}   \Big(\int_G|f(xz)|^p   \di\rho(x)\Big)^{1/p}  \delta^{-1/2}(z)\di\lambda(z)\\
&\lesssim 2^{k/2}\|f\|_p+2^{k/2}\|f\|_p\int_{B_{2^{k+1}}}\delta^{1/2}\di\rho\\
&\lesssim 2^{k/2+kd}\|f\|_p \,e^{Q2^k}\,,
\end{aligned}
$$
where we have applied \eqref{intdelta12}. We then have
\begin{equation}\label{Ik}
\begin{aligned}
\Big\| \Big( \sum_{k=0}^{\infty}I_k\Big)^{1/2}\Big\|_p\lesssim \sum_{k=0}^{\infty} e^{-c''2^{2k}}2^{k/2+kd}\|f\|_p e^{Q2^k}\lesssim \|f\|_p\,.
\end{aligned}
\end{equation}
In conclusion, by \eqref{Jk} and \eqref{Ik} we get
$$
\|g_{1-\alpha/2,0}\Delta^{\alpha/2}f\|_p\lesssim \|S^{{\rm loc}}_{\alpha}f\|_p+\|f\|_p\,,
$$
as required. 
 
 \smallskip
 
{ \emph{STEP II}.} We shall prove that  
\begin{equation}\label{claim2}
 \|S^{{\rm loc}}_{\alpha}f\|_p\lesssim \|\Delta^{\alpha/2} f\|_p+\|f\|_p  \qquad \forall f\in L^p_{\alpha}\,. 
\end{equation}
To prove it we write $f=(f-T_1f)+T_1f$ and we estimate $\|S^{{\rm loc}}_{\alpha}(f-T_1f)\|_p$ and $\|S^{{\rm loc}}_{\alpha}T_1f\|_p$, separately. 

Arguing as in \cite{CRTN} we write 
$$
f-T_1f=\sum_{m=-\infty}^{-1}f_m\,,\ {\rm{where}} \ 
f_m=-\int_{2^m}^{2^{m+1}}\frac{\partial}{\partial t}T_tf \di t\,,
\quad {\rm{and}} \quad g_m=\int_{2^{m-1}}^{2^m}
\Big|\frac{\partial}{\partial t}T_tf \Big|\di t 
\,. 
$$
We then obtain
$$
\begin{aligned}
& \big(S^{{\rm loc}}_{\alpha}(f-T_1f)(x)\big)^2\\
&=\int_0^1
\frac{1}{u^{1+2\alpha}V(u)^2}  \Big(\int_{|y|<u}| (f-T_1f)(xy^{-1})-
(f-T_1f)(x)|\di\rho(y)\Big)^2\di u\\ 
&=\sum_{j=-\infty}^{-1} \int_{2^j}^{2^{j+1}}
\frac{1}{u^{1+2\alpha}V(u)^2}  \Big(\int_{|y|<u}| (f-T_1f)(xy^{-1})-
(f-T_1f)(x)|\di\rho(y)\Big)^2\di u\\ 
&\lesssim \sum_{j=-\infty}^{-1}   \frac{2^j}{ (2^j)^{1+2\alpha+2d} }
\Big(\int_{|y|<2^{j+1}}| (f-T_1f)(xy^{-1})-
(f-T_1f)(x)|\di\rho(y)\Big)^2  \,,
\end{aligned}
$$
where we applied \eqref{pallepiccole}. Notice that 
$$
\begin{aligned}
& \frac{1}{ 2^{jd} }
 \int_{|y|<2^{j+1}}| (f-T_1f)(xy^{-1})-
(f-T_1f)(x)|\di\rho(y)\\
&\lesssim   \sum_{m=-\infty}^{-1}   \frac{1}{ 2^{jd} }\int_{|y|<2^{j+1}}|
f_m(xy^{-1})-  f_m(x)|\di\rho(y)  \,.
\end{aligned}
$$

If $m< 2j+3$, then 
\begin{equation}\label{m<}
 \frac{1}{ 2^{jd} }\int_{|y|<2^{j+1}}| f_m(xy^{-1})-  f_m(x)|\di\rho(y) \lesssim {M}^{1}g_{m+1}(x)\,,
 \end{equation}
 where $M^1$ is the local maximal function defined in \eqref{MR}.  In
 order to treat the case when $m\geq 2j+3$, we notice that for every
 $j\leq -1$, $y\in B_{2^{j+1}}$ and $x\in G$    
\begin{equation}\label{teovalormedio}
| f_m(xy^{-1})-  f_m(x)|\leq 2^{j+1}\,\sup \{|X_if_m(w)|: i=1,\dots,q,\,|w^{-1}x|\leq 2^{j+1}\}\,.
\end{equation}
Since 
$$
f_m=-2\int_{2^{m-1}}^{2^m}   \frac{\partial}{\partial t}(T_{2t}f)\di t=-4 \int_{2^{m-1}}^{2^m}T_t\frac{\partial}{\partial t}(T_{t}f)\di t\,,
$$
by applying the estimates of the heat kernel given in Subsection \ref{pt}, for every $w$ such that $|w^{-1}x|\leq 2^{j+1}$ we have 
$$ 
\begin{aligned}
|X_if_m(w)|&\lesssim \int_{2^{m-1}}^{2^m} \int_G \Big| \frac{\partial}{\partial t}(T_{t}f)(z)\Big| |X_ip_t(z^{-1}w)|\di\lambda(z)\\
&\lesssim \int_G \int_{2^{m-1}}^{2^m}  \Big| \frac{\partial}{\partial t}(T_{t}f)(z)\Big| t^{-1/2}V(\sqrt t)^{-1}\,\delta^{1/2}(z^{-1}w) e^{ -c|z^{-1}w|^2/t }\di t\di\lambda(z)\\
&\lesssim 2^{-m/2}2^{-md/2}\int_Gg_m(z)\delta^{1/2}(z^{-1}x)e^{-c|z^{-1}x|^2/2^m}\di\lambda (z)\\
&\lesssim 2^{-m/2}T_{c2^m}g_m(x)\,,
\end{aligned}
$$
for a suitable constant $c$. From \eqref{teovalormedio}  if
follows that 
\begin{equation}\label{altrocaso}
\begin{aligned}
 \frac{1}{ 2^{jd} }\int_{|y|<2^{j+1}}| f_m(xy^{-1})-
 f_m(x)|\di\rho(y)
& \lesssim  \frac{1}{ 2^{jd} }2^{j+1}2^{-m/2}T_{c2^m}g_m(x)2^{jd}\\
& \lesssim 2^{j-m/2}T_{c2^m}g_m(x)\,.
 \end{aligned}
\end{equation}
Thus, by \eqref{m<} and \eqref{altrocaso} 
 $$
\begin{aligned}
\big(S^{{\rm loc}}_{\alpha}(f-T_1f)(x)\big)^2&\lesssim \sum_{j=-\infty}^{-1}   \frac{1}{ (2^j)^{2\alpha } } \Big(\sum_{m=-\infty}^{2j+3-1} {M}^{1}g_{m+1}(x)+\sum_{m=2j+3}^{-1}         2^{j-m/2}T_{c2^m}g_m(x)\Big)^2 \,.
\end{aligned}
$$
We can argue as in \cite[p.298-303, 308-309]{CRTN} to deduce that   
 $$
\begin{aligned}
\Big\| S^{{\rm loc}}_{\alpha}(f-T_1f)(x)\Big\|_p 
&\lesssim \Big\|  \Big(\sum_{m=-\infty}^{-1} 2^{-m\alpha}g_m^2\Big)^{1/2} \Big\|_p\,.
\end{aligned}
$$
Since
$$
\sum_{m=-\infty}^{-1} 2^{-m\alpha}g_m^2(x)\lesssim \sum_{m=-\infty}^{-1}  \int_{2^{m-1}}^{2^m}\Big| \frac{\partial}{\partial t}T_tf(x)\Big|^2\,\di t\lesssim g_{1-\alpha/2} \Delta^{\alpha/2}f(x)^2\,,
$$ 
we have
\begin{equation}\label{f-T1f}
\| S^{{\rm loc}}_{\alpha}(f-T_1f) \|_p\lesssim \|g_{1-\alpha/2} \Delta^{\alpha/2}f  \|_p\lesssim \|  \Delta^{\alpha/2}f  \|_p\,.
\end{equation}
In order to estimate the norm of $S^{{\rm loc}}_{\alpha}T_1f$ we first notice that for every $x\in G$ and $y\in B_1$ 
$$
\begin{aligned}
|T_1f(xy^{-1})-T_1f(x)|&\lesssim |y|\sup\{ |X_iT_1f(w)|: |w^{-1}x|\leq |y|\}\\
&\lesssim |y|\sup\{ |X_iT_1f(w)|: |w^{-1}x|\leq 1\}\,.\end{aligned}
$$
By the estimates of the heat kernel and its derivatives in Subsection \ref{pt} there exists $t_0>0$ such that for every $w$ such that  $|w^{-1}x|\leq 1$  
$$
\begin{aligned}
|X_iT_1f(w)|&=|f\ast X_ip_1(w)|\\
&\leq \int |f(wv^{-1})||X_ip_1(v)|\di\rho\\
&\lesssim \int |f(wv^{-1})|  \delta^{1/2}(v) e^{-c|v|^2}  \di\rho(v)\\
&\lesssim \int |f(wv^{-1})|   p_{t_0}(v) \di\rho(v)\\
&= \int |f(z)|   p_{t_0}(z^{-1}w)  \di\lambda(z)\\
&\lesssim \int |f(z)|   p_{t_0}(z^{-1}x)  \di\lambda(z)\\ 
&=T_{t_0}|f|(x)\,.
\end{aligned}
$$
Thus
$$
\begin{aligned}
S^{{\rm loc}}_{\alpha}T_1f(x)^2&=\int_0^1\Big(  \frac{1}{u^{\alpha} V(u)}\int_{|y|<u}|T_1f(xy^{-1})-T_1f(x)|\di\rho(y) \Big)^2\frac{\di u}{u}\\
&\lesssim \int_0^1   \frac{1}{u^{2\alpha} V(u)^2} \Big(\int_{|y|<u} u T_{t_0}|f|(x)  \di\rho(y) \Big)^2\frac{\di u}{u}\\
&\lesssim T_{t_0}|f|(x)^2\,,
\end{aligned}
$$
where we used the fact that $\alpha\in (0,1)$. 
\begin{equation}\label{T1f}
\|S^{{\rm loc}}_{\alpha}T_1f\|_p\lesssim \|T_{t_0}|f|\|_p\lesssim
\|f\|_p\,,  
\end{equation}
which together with \eqref{f-T1f} gives \eqref{claim2}, as required. \qed

\medskip

 The representation formula that we just proved is the key ingredient to show the following lemma, which will be useful to prove the "interpolation estimate" given in Proposition \ref{int} below. 
 \begin{lemma}\label{normamodulof}
 For all $\alpha\in (0,1)$ and $p\in (1,\infty)$
 $$
 \| |f|\|_{\alpha,p}\lesssim \|f\|_{\alpha,p}\,.
 $$
 \end{lemma}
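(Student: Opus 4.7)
The plan is to deduce this immediately from the norm equivalence (i) of Theorem \ref{Salphaloc}, combined with a pointwise monotonicity of $S^{\mathrm{loc}}_\alpha$ under the operation $f\mapsto |f|$.

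First I would invoke Theorem \ref{Salphaloc}(i), which yields
$$
\||f|\|_{\alpha,p} \lesssim \|S^{\mathrm{loc}}_\alpha |f|\|_p + \||f|\|_p \qquad \text{and}\qquad \|S^{\mathrm{loc}}_\alpha f\|_p + \|f\|_p \lesssim \|f\|_{\alpha,p}\,.
$$
Since $\||f|\|_p = \|f\|_p$, it suffices to establish the pointwise bound $S^{\mathrm{loc}}_\alpha |f|(x) \le S^{\mathrm{loc}}_\alpha f(x)$ for a.e.\ $x\in G$. This in turn follows at once from the reverse triangle inequality applied inside the integrand defining $S^{\mathrm{loc}}_\alpha$: for every $x,y\in G$,
$$
\bigl| |f(xy^{-1})| - |f(x)| \bigr| \le |f(xy^{-1}) - f(x)|\,,
$$
so that the inner average in \eqref{S-alpha-loc-def} for $|f|$ is dominated by the corresponding average for $f$, and the monotonicity passes through the outer $L^2\bigl(\tfrac{du}{u}\bigr)$-norm.

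Chaining the three inequalities gives
$$
\||f|\|_{\alpha,p} \lesssim \|S^{\mathrm{loc}}_\alpha |f|\|_p + \|f\|_p \le \|S^{\mathrm{loc}}_\alpha f\|_p + \|f\|_p \lesssim \|f\|_{\alpha,p}\,,
$$
which is the desired estimate. There is essentially no obstacle here: the entire content is in Theorem \ref{Salphaloc}(i), and the virtue of the formulation via averaged first differences is precisely that it commutes nicely (in the sense of domination) with taking absolute values, whereas the spectral definition through $\Delta^{\alpha/2}$ does not.
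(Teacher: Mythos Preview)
Your proof is correct and follows exactly the same route as the paper: invoke the norm equivalence of Theorem \ref{Salphaloc}(i) and use the pointwise inequality $S^{\mathrm{loc}}_\alpha(|f|)(x)\le S^{\mathrm{loc}}_\alpha(f)(x)$, which follows from the reverse triangle inequality. The paper's argument is just a one-line version of what you wrote.
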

 \begin{proof}
 It suffices to notice that, for every $x\in G$, $S^{{\rm loc}}_{\alpha}(|f|)(x)\leq
 S^{{\rm loc}}_{\alpha}(f)(x)$, and use the representation of the
 $L^p_{\alpha}$-norm given by Theorem \ref{Salphaloc}. 
 \end{proof}
 \begin{proposition}\label{int}
 Let $\alpha,\beta,\gamma\geq 0$, $1<p,r<\infty$, $1<q\leq \infty$ and
 $0<\theta<1$ be such that $\gamma=\theta \alpha+(1-\theta)\beta$ and
 $1/r=\theta/p+(1-\theta)/q$. Then for all $f\in L^p_{\alpha}\cap L^q_{\beta}$ 
 \begin{equation}\label{Holder}
 \|f\|_{\gamma,r}\lesssim \|f\|_{\alpha,p}^{\theta}\|f\|_{\beta,q}^{1-\theta}\,.
 \end{equation}
 \end{proposition}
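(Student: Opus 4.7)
The plan is to reduce to the case $\alpha,\beta,\gamma\in (0,1)$ and exploit the characterization of the Sobolev norm in Theorem~\ref{Salphaloc}(i). The key observation is that the identity $\gamma = \theta\alpha + (1-\theta)\beta$ factorizes as $u^{-2\gamma} = (u^{-2\alpha})^{\theta}(u^{-2\beta})^{1-\theta}$, which is tailor-made for Hölder's inequality applied to the $du/u$ integral in the definition of $S^{\mathrm{loc}}_{\gamma}f(x)^{2}$. Using Hölder with exponents $1/\theta$ and $1/(1-\theta)$ yields the pointwise bound
$$
S^{\mathrm{loc}}_{\gamma}f(x) \le S^{\mathrm{loc}}_{\alpha}f(x)^{\theta}\, S^{\mathrm{loc}}_{\beta}f(x)^{1-\theta}\,,\qquad x\in G.
$$

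Next, since $1/r = \theta/p + (1-\theta)/q$, generalized Hölder in $L^{r}(\rho)$ gives
$$
\|S^{\mathrm{loc}}_{\gamma}f\|_{r}\le \|S^{\mathrm{loc}}_{\alpha}f\|_{p}^{\theta}\,\|S^{\mathrm{loc}}_{\beta}f\|_{q}^{1-\theta}\,,\qquad \|f\|_{r}\le \|f\|_{p}^{\theta}\,\|f\|_{q}^{1-\theta}\,.
$$
Summing these two inequalities and using the elementary estimate $a^{\theta}c^{1-\theta}+b^{\theta}d^{1-\theta}\le (a+b)^{\theta}(c+d)^{1-\theta}$, the left-hand side becomes $\|S^{\mathrm{loc}}_{\gamma}f\|_{r}+\|f\|_{r}$ while the right-hand side factors as $\bigl(\|S^{\mathrm{loc}}_{\alpha}f\|_{p}+\|f\|_{p}\bigr)^{\theta}\bigl(\|S^{\mathrm{loc}}_{\beta}f\|_{q}+\|f\|_{q}\bigr)^{1-\theta}$. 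Theorem~\ref{Salphaloc}(i) then converts this into \eqref{Holder} whenever $\alpha,\beta,\gamma\in (0,1)$. For general $\alpha,\beta\ge 0$, I would iterate Corollary~\ref{sobolevspaces}(ii) to reduce the three Sobolev norms to fractional indices in $[0,1)$ by peeling off $X_{J}$-derivatives; an alternative route is to pass through Corollary~\ref{sobolevspaces}(iii), reformulate as $\|(I+\Delta)^{\gamma/2}f\|_{r}\lesssim \|(I+\Delta)^{\alpha/2}f\|_{p}^{\theta}\|(I+\Delta)^{\beta/2}f\|_{q}^{1-\theta}$, and apply Stein's complex interpolation theorem to the analytic family $T_{z}=(I+\Delta)^{[z\alpha+(1-z)\beta]/2}$ together with $L^{p}$-boundedness (with polynomial growth) of the imaginary powers $(I+\Delta)^{is}$.

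The principal obstacle is the endpoint $q=\infty$, for which Theorem~\ref{Salphaloc}(i) is not directly formulated. A direct inspection of Step~I in the proof of Theorem~\ref{Salphaloc} shows that the pointwise heat-kernel bounds used there actually yield $\|S^{\mathrm{loc}}_{\beta}f\|_{\infty}\lesssim \|f\|_{\beta,\infty}$, which is exactly what is needed to push the Hölder step through when $q=\infty$. A secondary obstacle is the bookkeeping in the reduction to fractional exponents for general $\alpha,\beta\ge 0$, but this is compatible with the linear constraint $\gamma=\theta\alpha+(1-\theta)\beta$ because taking integer parts preserves the relation modulo $1$, so one can align the fractional parts and apply the fractional estimate to each $X_{J}f$ uniformly.
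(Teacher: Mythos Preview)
Your pointwise H\"older argument on $S^{\rm loc}$ is correct and gives a clean proof in the range $\alpha,\beta,\gamma\in(0,1)$, $p,q,r\in(1,\infty)$. This is genuinely different from the paper's route, which (after reducing to $\beta=0$) uses the Littlewood--Paley $g$-functions: one writes $(g_\delta\Delta^{\gamma/2}f)^2=\int_0^\infty t^{a+b}|\Delta^k T_t f|^2\,dt$ with $\delta+\gamma/2=k\in\bbN$ and applies H\"older in $t$ to factor into $g_{k-\alpha/2}\Delta^{\alpha/2}f$ and $g_k f$. The $g$-function device has the advantage of working for every $\alpha\ge0$ at once, with no reduction to fractional indices.

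Both of your extensions beyond the core case have genuine gaps. For the endpoint $q=\infty$: the direction you need is $\|S^{\rm loc}_\beta f\|_\infty\lesssim\|f\|_{\beta,\infty}$, which is Step~II of Theorem~\ref{Salphaloc} (not Step~I), and that step concludes via the bound $\|g_{1-\alpha/2}\Delta^{\alpha/2}f\|_p\lesssim\|\Delta^{\alpha/2}f\|_p$, which \emph{fails} at $p=\infty$. Moreover, since one reduces to $\beta=0$ before attacking $q=\infty$, your H\"older factorisation would actually require control of $S^{\rm loc}_0f$, and $\int_0^1 A(x,u)^2\,du/u$ is not dominated by $\|f\|_\infty^2$. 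The paper instead runs an Adams--Meyers complex-interpolation argument with the analytic family $z\mapsto\Delta^{-\alpha z/2}$, using operator-norm bounds on imaginary powers; positivity of the heat semigroup is exploited to prove the estimate first for nonnegative $f$, and only then is Lemma~\ref{normamodulof} (which does rely on $S^{\rm loc}$) invoked to pass to signed $f$.

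Your reduction to $\alpha,\beta,\gamma\in(0,1)$ via Corollary~\ref{sobolevspaces}(ii) is also not valid as stated. The claim that ``taking integer parts preserves the relation modulo~$1$'' is false: with $\alpha=1.9$, $\beta=0.3$, $\theta=\tfrac12$ one gets $\gamma=1.1$, but the fractional parts satisfy $\tfrac12(0.9)+\tfrac12(0.3)=0.6\neq0.1$, so the convex relation does not descend. The paper sidesteps this by substituting $g=(cI+\Delta)^{\beta/2}f$ (Corollary~\ref{sobolevspaces}(iii)) to reduce to $\beta=0$, after which the $g$-function argument covers every $\alpha\ge0$ directly. Your alternative suggestion of Stein interpolation on $(I+\Delta)^{is}$ is in the right spirit for $q=\infty$ but is not what the paper does, and would still require the positivity trick to control the $L^\infty$ endpoint.
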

 \begin{proof}
 Notice that it is enough to give the proof in the case when $\beta=0$. 
 
 We then take $\alpha,\gamma> 0$, $1<p,r<\infty$, $1<q\leq \infty$ and
 $0<\theta<1$ such that $\gamma=\theta \alpha$ and
 $1/r=\theta/p+(1-\theta)/q$. By \eqref{Sobolev-norm} we
 have $ 
 \|f\|_{\gamma,r}=\|f\|_r+\|\Delta^{\gamma/2}f\|_r$. Choose $a,b,s$
 such that $a+b=r$, $as=p$ and $bs'=q$. By H\"older's inequality we
 obtain that  
 \begin{equation}\label{Holderf}
 \|f\|_r\leq  \Big(\int_G|f|^{as}
 \di\rho\Big)^{1/sr}\,\Big(\int_G|f|^{bs'} \di\rho\Big)^{1/s'r}=
 \|f\|_p^{a/r}\|f\|_q^{b/r}=\|f\|_{p}^{\theta}\|f\|_{q}^{1-\theta}\leq
 \|f\|_{\alpha, p}^{\theta} \|f\|_{q}^{1-\theta}\,.
 \end{equation}
 It remains to estimate $\|\Delta^{\gamma/2}f\|_r$. 
 
 If $q<\infty$, choose $\delta>0$ and $k\in\mathbb N$ such that
 $\delta+\gamma/2=k$. Then if $a=2\theta (k-\frac12-\frac{\alpha}{2})$
 and $b=2(1-\theta)(k-\frac12)$, by applying H\"older's inequality in
 $t$ we get 
 \begin{equation}\label{gdelta}
 \begin{aligned}
 (g_{\delta}\Delta^{\gamma/2}f(x))^2&=\int_0^{+\infty}    t^{a+b}|\Delta^kT_tf(x)|^2\di t\\
 &\leq (g_{k-\alpha/2}\Delta^{\alpha/2}f(x))^{2\theta}\,(g_kf(x))^{2(1-\theta)}\,.
 \end{aligned}
 \end{equation}
 Therefore, by Littlewood-Paley-Stein theory, \eqref{gdelta} 
and applying H\"older's inequality in the $x$-variable
 \begin{equation}\label{qfinito}
 \begin{aligned}
 \|\Delta^{\gamma/2}f\|_r&\approx \|g_{\delta}\Delta^{\gamma/2}f  \|_r\\
 &\leq \Big(   \int_G  (g_{k-\alpha/2}\Delta^{\alpha/2}f(x))^{\theta\,r}\,(g_kf(x))^{(1-\theta)\,r} \di\rho(x)\Big)^{1/r}\\
 &\leq \Big(   \int_G
 (g_{k-\alpha/2}\Delta^{\alpha/2}f(x))^{\theta\,rs}
 \di\rho(x)\Big)^{1/sr} \Big(   \int_G  (g_kf(x))^{(1-\theta)\,rs'}
 \di\rho(x)\Big)^{1/s'r}\\ 
 &= \|g_{k-\alpha/2}\Delta^{\alpha/2}f\|_p^{\theta} \,\|g_kf\|_q^{1-\theta}\\
 &\lesssim \|\Delta^{\alpha/2}f\|_p^{\theta}\|f\|_q^{1-\theta}\\
&\lesssim \|f\|_{\alpha, p}^{\theta}\|f\|_{q}^{1-\theta} \,.
 \end{aligned}
 \end{equation}
 Estimates \eqref{Holderf} and \eqref{qfinito} prove the proposition
 in the case when $q$ is finite and $\beta=0$.  
 
 Suppose now that $q=\infty$. We follow closely \cite[Theorem 2.4]{AM} using a complex interpolation argument. 
 
 Assume first that $f$ and $h$ are nonnegative simple functions and define for $z\in \Sigma^{\infty}_0=\{z\in\mathbb C:   \Re z\geq 0\}$ 
 $$
 w(z)=\int_G\Delta^{-\alpha z/2}f(x) h(x)^{(1-1/p)(1-z)+z}\di \rho(x)\,.
 $$
The function $w$ is continuous in $\Sigma^{\infty}_0$, holomorphic in the interior of $\Sigma^{\infty}_0$ and bounded in any strip $\Sigma^{c}_0=\{z\in\mathbb C:   0\leq  \Re z\leq c\}$, $c\in\mathbb R^+$. When $z=i\zeta,\,\zeta\in\mathbb R$, by \cite{M} there exists a positive constant $C_p$ such that
$$
\|\Delta^{-i\alpha \zeta/2}f\|_p\leq C_p   (1+|\alpha \zeta|/2)^{1/2}\,e^{    \frac{\pi}{2}  \frac{|\alpha \zeta|}{2} }\|f\|_p\,,
$$ 
so that
$$
|w(i\zeta)|\leq C_p(1+|\alpha \zeta|/2)^{1/2}\,  e^{    \frac{\pi}{2}  \frac{|\alpha \zeta|}{2} }   \|f\|_p\|h\|_1^{1-1/p}\,.
$$
On the other hand, since $f$ is nonnegative
$$
\begin{aligned}
|\Delta^{-\alpha/2-i\alpha\zeta/2}f(x)|&=\frac{1}{|\Gamma(\alpha/2+i\alpha\zeta/2)|}\Big|\int_0^{\infty}t^{\alpha/2+i\alpha\zeta/2-1} e^{-t\Delta}f(x)\di t \Big|\\
&\leq \frac{1}{|\Gamma(\alpha/2+i\alpha\zeta/2)|} \int_0^{\infty}t^{\alpha/2-1} e^{-t\Delta}f(x)\di t \\
&= \frac{\Gamma(\alpha/2)}{|\Gamma(\alpha/2+i\alpha\zeta/2)|}\Delta^{-\alpha/2}f(x)\,.
\end{aligned}
$$
Thus
$$
|w(1+i\zeta)|\leq \frac{\Gamma(\alpha/2)}{|\Gamma(\alpha/2+i\alpha\zeta/2)|} \|\Delta^{-\alpha/2}f\|_{\infty}\,\|h\|_1\,.
$$
Define $W(z)=\Gamma(1+\alpha z/2) \frac{1}{1+z}w(z)$ for $z\in \Sigma^{\infty}_0$. By the estimates satisfied by the function $w$ on the boundary of the strip $\Sigma_0^1$ and the three lines theorem we get
$$
|W(1-\theta)|\lesssim 
\|f\|_p^{\theta}\|\Delta^{-\alpha/2}f\|_{\infty}^{1-\theta}\|h\|_1^{(1-1/p)\theta+1-\theta}\,,
$$
which implies that
$$
|w(1-\theta)|\lesssim \|f\|_p^{\theta}\|\Delta^{-\alpha/2}f\|_{\infty}^{1-\theta}\|h\|_1^{1/r'}\,.
$$
By taking the supremum over all functions $g=h^{1/r'}$ such that $\|g\|_{r'}\leq 1$ we obtain that 
$$
\|\Delta^{-(1-\theta)\alpha/2}f\|_r\lesssim \,\|f\|_p^{\theta}\|\Delta^{-\alpha/2}f\|_{\infty}^{1-\theta}
$$
for all nonnegative functions $f$. This implies that 
$$
\|\Delta^{\gamma/2}g\|_r\lesssim\,\|\Delta^{\alpha/2}g\|_p^{\theta}\|g\|_{\infty}^{1-\theta}
$$
for all nonnegative functions $g$. By \eqref{Holderf} and the estimate above we deduce that for all nonnegative functions
\begin{equation}\label{fnonnegative}
\|g\|_{\gamma,r}\lesssim\,\|g\|_{\alpha,p}^{\theta}\|g\|_{\infty}^{1-\theta}\,.
\end{equation}
Take now $\alpha\in [0,1]$ and $f$ of arbitrary sign. Then writing $f=f_+-f_-$, applying \eqref{fnonnegative} to $f_+$ and $f_-$, using Lemma \ref{normamodulof} and noticing that $\|\Delta^{\alpha/2}f_{\pm}\|_p\lesssim \|\Delta^{\alpha/2}f\|_p$ we obtain that
\begin{equation}\label{alpha<=1}
\|f\|_{\gamma,r}\lesssim \,\|f\|_{\alpha,p}^{\theta}\|f\|_{\infty}^{1-\theta}\,.
\end{equation}
It remains to consider the case when $\alpha>1$. Suppose first that $\gamma<1$, $\theta\in (0,1)$, $\alpha>1$, $\gamma=\theta\alpha$ and $\frac{1}{r}=\frac{\theta}{p}$. We choose $\beta<1$ such that $\gamma<\beta<\gamma r$ and $s>1$ such that $\frac{1}{r}=\frac{\gamma}{\beta s}$. Then by \eqref{alpha<=1}  
$$
\|f\|_{\gamma,r}\lesssim\,\|f\|_{\beta,s}^{\tilde \theta}\|f\|_{\infty}^{1-\tilde\theta}\,,
$$
for $\tilde \theta\in (0,1)$ such that $\gamma=\tilde\theta\beta$ and $\frac1r=\frac{\tilde\theta}{s}$. Moreover, by \eqref{Holder}
$$
\|f\|_{\beta,s}\lesssim \|f\|_{\alpha,p}^{\theta'}\|f\|_{\gamma,r}^{1-\theta'}\,,
$$
for $\theta'\in (0,1)$ such that $\beta=\theta'\alpha+(1-\theta')\gamma$, and $\frac1s=\frac{\theta'}{p}+\frac{1-\theta'}{r}$. Putting together the two estimates above we obtain 
\begin{equation}\label{gamma<1}
\|f\|_{\gamma,r}\lesssim \|f\|_{\alpha,p}^{\theta}\|f\|_{\infty}^{1-\theta}\,,
\end{equation}
which proves the theorem for $\alpha>1$ and $\gamma<1$. Take now $\gamma\geq 1$ and choose $q$ such that $\gamma r<q<r$ and $\beta<1$ such that $\beta q=\gamma r$. We have
$$
\|f\|_{\gamma,r} \lesssim \|f\|_{\alpha,p}^{\tilde \theta}\|f\|_{\beta,q}^{1-\tilde\theta}\,,
$$
for $\gamma=\tilde\theta\alpha+(1-\tilde\theta)\beta$ and
$\frac{1}{r}=\frac{\tilde\theta}{p}+\frac{1-\tilde\theta}{q}$. By
\eqref{gamma<1} we get 
$$
\|f\|_{\beta,q}\lesssim \|f\|_{\alpha,p}^{\theta'}\|f\|_{\infty}^{1-\theta'}\,,
$$
where $\beta=\theta'\alpha$ and $\frac1q=\frac{\theta'}{p}$.  Putting
together the two estimates above we obtain  
\begin{equation}\label{gamma>=1}
\|f\|_{\gamma,r}\lesssim \|f\|_{\alpha,p}^{\theta}\|f\|_{\infty}^{1-\theta}\,,
\end{equation} 
which proves the theorem for $\alpha>1$, $\gamma\geq 1$, $q=\infty$
and $\beta=0$. The proof of the proposition is now complete.  
 \end{proof}
 
 \subsection{A representation formula for the Sobolev norm in terms of
   $D^{{\rm loc}}_{\alpha}$} 
Recall that in \eqref{D-alpha-loc-def} we  have defined the quantity 
$D^{{\rm loc}}_{\alpha}f$.  We shall prove Theorem \ref{Salphaloc}
(ii). To do so, we first need some tools and some technical results
that we shall introduce below.

For every locally integrable function $f$ and every ball $B$ we denote
by $f_B$ the average $\frac{1}{\rho(B)}\int_Bf\di\rho$. For every
$q\in [1,\infty), r>0, x\in G$ we define  
$$
\Omega^{(q)}_f(x,r)=\sup\Big\{\Big(
\frac{1}{\rho(B)}\int_B|f-f_B|^q\di\rho\Big)^{1/q}:\, B\in\mathcal B_r,\, x\in B  \Big\}\,, 
$$
and 
$$
\Omega^{\infty}_f(x,r)=\sup\{ \|f-f_B\|_{\infty} :\, B\in\mathcal B_r,\,  x\in B  \}\,.  
$$
 We recall that $\mathcal B_R$ denotes the collection of balls of
radius $\le R$.   We simply write $
\Omega_f(x,r)$ for $\Omega^{(1)}_f(x,r)$. 
\begin{lemma}\label{Omega}
For every locally integrable function $f$ the following hold:
\begin{itemize}
\item[(i)] $\Omega_f(x,r)\leq \Omega^{(q)}_f(x,r)$ for every $q\in [1,\infty], r>0$;
\item[(ii)] if $B, B'\in\mathcal B_1$ and $B\subset B'$, then 
$$
 \frac{1}{\rho(B)}\int_B|f-f_B|\di\rho \leq 2  \frac{\rho(B')}{\rho(B)}    \frac{1}{\rho(B')}\int_{B'}|f-f_{B'}|^q\di\rho\,;
$$
\item[(iii)] for every $x\in G$, $r\leq s\leq 2r\leq 2$ 
$$
\Omega_f(x,r)\lesssim \Omega_f(x,s) \lesssim \Omega_f(x,2r)\,;
$$
\item[(iv)] for every $B\in \mathcal B$ of radius $r$ and almost every $y\in B$ 
$$
|f(y)-f_B|\lesssim \int_0^{8r} \Omega_f(y,s)\frac{\di s}{s}\,.
$$
\end{itemize}
\end{lemma}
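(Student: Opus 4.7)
The plan is to establish the four items one at a time, with the bulk of the work in (iv).

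Item (i) is H\"older's inequality: for every ball $B$,
$$ \frac{1}{\rho(B)}\int_B|f-f_B|\di\rho \leq \Big( \frac{1}{\rho(B)}\int_B|f-f_B|^q\di\rho \Big)^{1/q},$$
and one takes the supremum over $B\in\mathcal B_r$ with $x\in B$. For item (ii), I would use the triangle inequality $|f-f_B|\leq |f-f_{B'}|+|f_B - f_{B'}|$ together with the observation $|f_B-f_{B'}|\leq \rho(B)^{-1}\int_B|f-f_{B'}|\di\rho$. Combining these and enlarging the domain of integration from $B$ to $B'$ (using $B\subset B'$) yields the factor $2$ together with the ratio $\rho(B')/\rho(B)$. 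Item (iii) is immediate from the monotonicity of $r\mapsto \Omega_f(x,r)$: as $r\leq s\leq 2r$, the classes of balls satisfy $\mathcal B_r\subset \mathcal B_s\subset \mathcal B_{2r}$, so taking suprema over these nested collections gives both inequalities with constant $1$.

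For (iv), I would use a standard telescoping argument. Fix a ball $B=B(c_B,r)$ of radius $r\leq 1$ and a Lebesgue point $y\in B$. Set $B_0 = B(y,2r)$, which contains $B$, and for $k\geq 1$ set $B_k = B(y, 2r/2^k)$. Since $(G,d_C,\rho)$ is locally doubling by \eqref{pallepiccole}, Lebesgue's differentiation theorem applies and $f_{B_k}\to f(y)$ for a.e.~$y$, whence
$$ f(y) - f_B \;=\; (f_{B_0} - f_B) \;+\; \sum_{k=0}^{\infty}(f_{B_{k+1}} - f_{B_k}).$$
Each telescoping term is controlled by applying (ii) to the nested pair $B_{k+1}\subset B_k$: since $\rho(B_k)/\rho(B_{k+1})$ is uniformly bounded by \eqref{pallepiccole}, one obtains $|f_{B_{k+1}}-f_{B_k}|\lesssim \Omega_f(y, 2r/2^k)$, and similarly $|f_{B_0}-f_B|\lesssim \Omega_f(y, 2r)$. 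To convert these discrete contributions into the integral $\int_0^{8r}\Omega_f(y,s)\di s/s$, I would use (iii) to estimate each $\Omega_f(y,2r/2^k)\lesssim \int_{r/2^k}^{2r/2^k}\Omega_f(y,s)\di s/s$, and then sum, noting that the intervals $[r/2^k, 2r/2^k]$ cover $(0,2r]$ with bounded overlap (the $8r$ in the statement of (iv) provides extra slack, absorbing constants from local doubling).

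The main obstacle is (iv), specifically the fact that the initial ball $B$ need not be centered at $y$, which forces the introduction of the auxiliary ball $B_0=B(y,2r)$ to launch the telescoping, and keeping track of the local doubling constants --- this is where the restriction to small radii implicit in (ii) becomes essential, since on a nonunimodular group the doubling ratio $\rho(B')/\rho(B)$ can grow exponentially for large balls and the argument would break down without the hypothesis $r\leq 1$.
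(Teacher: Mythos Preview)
Your argument is essentially correct and is the standard one; the paper itself does not give a proof but simply cites \cite{CRTN}, remarking that only the local doubling property is needed. Your sketch makes precisely this point, so there is no substantive difference in approach.

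One small slip in (iv): when you convert the discrete sum to the integral, you write $\Omega_f(y,2r/2^k)\lesssim \int_{r/2^k}^{2r/2^k}\Omega_f(y,s)\,\di s/s$, but monotonicity goes the wrong way for that interval. Since $\Omega_f(y,\cdot)$ is nondecreasing, you have $\Omega_f(y,2r/2^k)\le \Omega_f(y,s)$ only for $s\ge 2r/2^k$; hence the correct choice is the interval $[2r/2^k,4r/2^k]$, giving $\Omega_f(y,2r/2^k)\le (\log 2)^{-1}\int_{2r/2^k}^{4r/2^k}\Omega_f(y,s)\,\di s/s$. Summing over $k\ge0$ then yields $\int_0^{4r}\Omega_f(y,s)\,\di s/s$, and the upper limit $8r$ in the statement indeed provides the slack you anticipated. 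With this shift the argument is complete.
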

The above lemma was proved in \cite{CRTN}: the same proof works in our setting, since only the local doubling property plays a role here. 

For every locally integrable function $f$, $q\in[1,\infty]$, $R>0$, $\alpha\in (0,1)$ and $x\in G$ we define 
$$
\begin{aligned}
G^{{\rm loc}}_{\alpha}f(x)&=\Big( \int_0^1[r^{-\alpha} \Omega_f(x,r)]^2  \frac{\di r}{r}\Big)^{1/2}\\
G^{{\rm loc}}_{\alpha,q}f(x)&=\Big( \int_0^1[r^{-\alpha} \Omega^{(q)}_f(x,r)]^2   \frac{\di r}{r}\Big)^{1/2}\\
G^{R}_{\alpha}f(x)&=\Big( \int_0^R[r^{-\alpha} \Omega_f(x,r)]^2   \frac{\di r}{r}\Big)^{1/2}\\
G^{R}_{\alpha,q}f(x)&=\Big( \int_0^R[r^{-\alpha} \Omega^{(q)}_f(x,r)]^2   \frac{\di r}{r}\Big)^{1/2}\\
S^{R}_{\alpha}f(x)&=\Big( \int_0^R \Big[   \frac{1}{u^{\alpha}V(u)}\int_{|y|<u}|f(xy^{-1})-f(x)|\di\rho(y) \Big]^2\frac{\di u}{u}  \Big)^{1/2}\,.
\end{aligned}
$$
\begin{lemma}\label{R1R2}
For every $R_1, R_2>0$, $p\in(1,\infty)$ and $\alpha\in(0,1)$, 
$$
\begin{aligned}
\|S^{R_1}_{\alpha}f\|_p+\|f\|_p &\approx \|S^{R_2}_{\alpha}f\|_p+\|f\|_p\,,\\
\|G^{R_1}_{\alpha}f\|_p+\|f\|_p &\approx \|G^{R_2}_{\alpha}f\|_p+\|f\|_p\,.
\end{aligned}
$$
\end{lemma}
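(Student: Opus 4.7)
The plan is the following. Fix $R_1\le R_2$ (the comparison is symmetric). Both $S^R_\alpha f(x)$ and $G^R_\alpha f(x)$ are pointwise nondecreasing in $R$ (the integrands are nonnegative and, for $G$, the family $\mathcal{B}_r$ over which the supremum in $\Omega_f(x,r)$ is taken grows with $r$), so the inequalities $\|S^{R_1}_\alpha f\|_p\le\|S^{R_2}_\alpha f\|_p$ and $\|G^{R_1}_\alpha f\|_p\le\|G^{R_2}_\alpha f\|_p$ are immediate. By Minkowski's inequality in $L^2(\di u/u)$, the reverse inequalities will follow once one bounds in $L^p$ the ``tails''
$$
T_S(x):=\Big(\int_{R_1}^{R_2}\Big[\frac{1}{u^\alpha V(u)}\int_{|y|<u}|f(xy^{-1})-f(x)|\di\rho(y)\Big]^2 \frac{\di u}{u}\Big)^{1/2},
$$
$$
T_G(x):=\Big(\int_{R_1}^{R_2}[r^{-\alpha}\Omega_f(x,r)]^2\frac{\di r}{r}\Big)^{1/2},
$$
by $\|f\|_p$.

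For $T_S$, I would split $|f(xy^{-1})-f(x)|\le |f(xy^{-1})|+|f(x)|$. The contribution of $|f(x)|$ produces $|f(x)|\,(\int_{R_1}^{R_2}u^{-2\alpha-1}\di u)^{1/2}\lesssim |f(x)|$, already controlled in $L^p$. For the contribution of $|f(xy^{-1})|$, I would change variables $z=xy^{-1}$; left-invariance of $d_C$ gives $\{xy^{-1}:|y|<u\}=B(x,u)$, and using right-invariance of $\rho$ together with the identity $\di\rho(z^{-1})=\delta(z)\di\rho(z)$ one obtains
$$
\int_{|y|<u}|f(xy^{-1})|\di\rho(y)=\int_{B(x,u)}|f(z)|\,\delta(x^{-1}z)\di\rho(z).
$$
Combined with $\rho(B(x,u))=\delta^{-1}(x)V(u)$ from \eqref{pallainx0}, this yields
$$
\frac{1}{V(u)}\int_{|y|<u}|f(xy^{-1})|\di\rho(y)=\frac{1}{\rho(B(x,u))}\int_{B(x,u)}|f(z)|\,\delta(x^{-1}z)\di\rho(z).
$$
Since $\delta$ is a continuous homomorphism and closed balls of radius $\le R_2$ are compact, $\delta(x^{-1}z)\le C_{R_2}$ for every $z\in B(x,u)$ with $u\le R_2$. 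Hence this quantity is dominated by $C_{R_2}\,M^{R_2}|f|(x)$ uniformly in $u\in[R_1,R_2]$, which gives $T_S(x)\lesssim M^{R_2}|f|(x)$. The $L^p$-boundedness of the local maximal function $M^{R_2}$ for $p\in(1,\infty)$ then yields $\|T_S\|_p\lesssim\|f\|_p$, as required.

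The $G$-version is conceptually the same, and in fact easier: for any ball $B\in\mathcal{B}_r$ containing $x$ one has $\frac{1}{\rho(B)}\int_B|f-f_B|\di\rho\le 2\cdot\frac{1}{\rho(B)}\int_B|f|\di\rho\le 2\,M^{R_2}|f|(x)$ whenever $r\le R_2$, so $\Omega_f(x,r)\le 2\,M^{R_2}|f|(x)$ on $[R_1,R_2]$, and $T_G(x)\lesssim M^{R_2}|f|(x)$, which is again bounded in $L^p$ by $\|f\|_p$. The only moderately delicate step is the change of variables for $T_S$ --- keeping track of the modular function correctly --- but once that is done, the whole lemma reduces to the $L^p$-boundedness of $M^{R_2}$, so I do not anticipate any serious obstacle.
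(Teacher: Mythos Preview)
Your approach is essentially identical to the paper's: monotonicity gives one inequality, and for the reverse you bound the tail over $[R_1,R_2]$ by the local maximal function $M^{R_2}|f|$ via the same change of variables and the bound $\delta(x^{-1}z)\lesssim 1$ on balls of radius $\le R_2$ (the paper writes this as $\delta(z)\lesssim\delta(x)$). One small slip: your intermediate identity $\int_{|y|<u}|f(xy^{-1})|\di\rho(y)=\int_{B(x,u)}|f(z)|\,\delta(x^{-1}z)\di\rho(z)$ is off by a factor of $\delta(x)$ --- the correct right-hand side is $\int_{B(x,u)}|f(z)|\,\delta(z)\di\rho(z)$ --- but after dividing by $V(u)=\delta(x)\rho(B(x,u))$ your displayed final formula is correct, so the argument goes through unchanged.
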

\begin{proof}
Assume $R_1\leq R_2$. Then it is obvious that $S^{R_1}_{\alpha}f(x)\leq S^{R_2}_{\alpha}f(x)$ and $G^{R_1}_{\alpha}f(x) \leq G^{R_2}_{\alpha}f(x)$ for every $x\in G$, so that 
$$
\begin{aligned}
\|S^{R_1}_{\alpha}f\|_p+\|f\|_p &\leq \|S^{R_2}_{\alpha}f\|_p+\|f\|_p\\
\|G^{R_1}_{\alpha}f\|_p+\|f\|_p &\leq \|G^{R_2}_{\alpha}f\|_p+\|f\|_p\,.
\end{aligned}
$$
In order to prove the lemma, using \eqref{pallainx0}, 
 we notice that
$$
\begin{aligned}
&\int_{R_1}^{R_2}\Big[ \frac{1}{r^{\alpha}V({r}) }\int_{|y|\leq r} |f(xy^{-1})-f(x)|\di\rho(y) \Big]^2\frac{\di r}{r}\\
&=\int_{R_1}^{R_2}\Big[  \frac{1}{r^{\alpha}V({r}) }\int_{|y|\leq r} |f(xy)-f(x)|\di\lambda(y) \Big]^2\frac{\di r}{r}\\
&\lesssim \int_{R_1}^{R_2}\Big[     \frac{1}{r^{\alpha}V({r}) }\int_{B(x,r)} |f(z)|\di\lambda(z)+  \frac{|f(x)|}{r^{\alpha} } \Big]^2\frac{\di r}{r}\\
&= \int_{R_1}^{R_2}\Big[     \frac{1}{r^{\alpha}V({r}) }\int_{B(x,r)} |f(z)|  \delta(z)\di\rho(z)+  \frac{|f(x)|}{r^{\alpha} } \Big]^2\frac{\di r}{r}\\
&\lesssim  \int_{R_1}^{R_2}\Big[     \frac{1}{r^{\alpha}V({r}) }\int_{B(x,r)} |f(z)|  \delta(x)\di\rho(z)+  \frac{|f(x)|}{r^{\alpha} } \Big]^2\frac{\di r}{r}\\
&\lesssim  \int_{R_1}^{R_2}\Big[  \frac{1}{r^{\alpha}} M^{R_2}f(x)+\frac{|f(x)|}{r^{\alpha} } \Big]^2\frac{\di r}{r}\\&\lesssim M^{R_2}f(x)+|f(x)|\,,
\end{aligned}
$$
 where $M^{R_2}$ is the local maximal function defined in \eqref{MR}.  It follows that 
$$
\|S^{R_2}_{\alpha}f\|_p+\|f\|_p\leq \|S^{R_1}_{\alpha}f\|_p+\|f\|_p+\|M^{R_2}f\|_p+\|f\|_p\lesssim \|S^{R_1}_{\alpha}f\|_p+\|f\|_p\,,
$$
where we have applied the boundedness of the maximal function $M^{R_2}$ on $L^p$. A similar argument shows that for every $r\leq R_2$, $\Omega_f(x,r)\leq M^{R_2}f(x)$ so that 
$$
\int_{R_1}^{R_2} [r^{-\alpha} \Omega_f(x,r)]^2  \frac{\di r}{r}\lesssim \int_{R_1}^{R_2}[   M^{R_2}f(x)  ]^2  \frac{\di r}{r^{1+\alpha}}\,.
$$
Thus
$$
\|G^{R_2}_{\alpha}f\|_p+\|f\|_p\lesssim \|G^{R_1}_{\alpha}f\|_p+\|f\|_p+\|M^{R_2}f\|_p\lesssim \|G^{R_1}_{\alpha}f\|_p+\|f\|_p\,,
$$
applying again the $L^p$ boundedness of the maximal function $M^{R_2}$. 
\end{proof}

\begin{proposition}\label{GR}
For every locally integrable function $f$, $\alpha\in (0,1)$ $p\in (1,\infty)$ and $R_1, R_2>0$ the following hold:
\begin{itemize} 
\item[(i)] if $p\in (1,2]$, $\alpha\leq\frac{d}{p}$, $q<\frac{dp}{d-\alpha p}$, then
$$
\|G^{R_1}_{\alpha,q}f\|_p+\|f\|_p \approx \|G^{R_2}_{\alpha}f\|_p+\|f\|_p\,;
$$
\item[(ii)] if $p\in [2,\infty)$, $\alpha\leq\frac{d}{p}$, $q<\frac{2d}{d-2\alpha}$, then
$$
\|G^{R_1}_{\alpha,q}f\|_p+\|f\|_p \approx \|G^{R_2}_{\alpha}f\|_p+\|f\|_p\,;
$$
\item[(iii)] if $p\in (1,\infty)$, $\alpha>\frac{d}{p}$, then
$$
\|G^{R_1}_{\alpha,\infty}f\|_p+\|f\|_p \approx \|G^{R_2}_{\alpha}f\|_p+\|f\|_p\,.
$$ 
\end{itemize} 
\end{proposition}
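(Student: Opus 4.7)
The easy direction is immediate. H\"older's inequality gives the pointwise bound $\Omega_f(y,s)\le \Omega^{(q)}_f(y,s)$ for every $q\in[1,\infty]$, hence $G^{R_1}_{\alpha}f \le G^{R_1}_{\alpha,q}f$, and Lemma~\ref{R1R2} then yields
\[
\|G^{R_2}_{\alpha}f\|_p + \|f\|_p \lesssim \|G^{R_1}_{\alpha}f\|_p + \|f\|_p \le \|G^{R_1}_{\alpha,q}f\|_p + \|f\|_p
\]
in all three cases.

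For the converse inequality, my starting point is the pointwise representation of Lemma~\ref{Omega}(iv): if $B$ is a ball of radius $r\le 1$, then for a.e.\ $y\in B$
\[
|f(y)-f_B| \lesssim \int_0^{8r} \Omega_f(y,s)\,\frac{\di s}{s} \lesssim r^{\alpha}\,\Phi(y,r),
\]
where by Cauchy--Schwarz $\Phi(y,r):=\bigl(\int_0^{8r} \Omega_f(y,s)^2\, s^{-2\alpha}\,\di s/s\bigr)^{1/2}\le G^{8R_1}_{\alpha}f(y)$. Averaging the $q$-th power of the above inequality over $B$ and taking the supremum over $B\ni x$ with $B\in\mathcal B_r$ gives, for finite $q$,
\[
r^{-\alpha}\Omega^{(q)}_f(x,r) \;\lesssim\; \bigl(M^{R_1}[\Phi(\cdot,r)^q](x)\bigr)^{1/q},
\]
and an analogous essential-supremum bound when $q=\infty$.

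Squaring this estimate, integrating against $\di r/r$ over $(0,R_1)$, and then taking $L^p(\di\rho)$ norms reduces the hard direction to an inequality of the form
\[
\biggl\|\biggl(\int_0^{R_1}\bigl(M^{R_1}[\Phi(\cdot,r)^q]\bigr)^{2/q}\,\frac{\di r}{r}\biggr)^{1/2}\biggr\|_p \;\lesssim\; \|G^{8R_1}_{\alpha}f\|_p+\|f\|_p,
\]
which I would establish by combining a vector-valued Fefferman--Stein maximal inequality in $L^p(L^2(\di r/r))$ with the pointwise control $\Phi(\cdot,r)\le G^{8R_1}_{\alpha}f$. The three regimes (i)--(iii) correspond to three ranges of $q$: in (i) and (ii) the assumed upper bound on $q$ is exactly the Sobolev-embedding threshold $dp/(d-\alpha p)$, respectively $2d/(d-2\alpha)$, which is what makes the combined Sobolev/Fefferman--Stein inequality applicable; in (iii), since $\alpha p>d$, a Morrey-type embedding places the relevant objects in $L^{\infty}$ and handles $q=\infty$. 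A final appeal to Lemma~\ref{R1R2} replaces the scale $8R_1$ by $R_2$ on the right-hand side.

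The main obstacle is arranging the exponent conditions on $q$ so that the $\int_0^{R_1}\di r/r$ integral converges after taking $L^p$ norms. The crude pointwise bound $\Omega^{(q)}_f(x,r)\lesssim r^{\alpha}\,M(G^{R_2}_{\alpha}f)(x)$, obtained by replacing $\Phi(\cdot,r)$ by $G^{8R_1}_{\alpha}f$ before averaging, produces a logarithmic divergence as $r\to 0$. The remedy is to preserve the decay $\Phi(y,r)\to 0$ as $r\to 0$ --- built into the square-function structure --- throughout the maximal-function argument; this is precisely what forces the three Sobolev-type thresholds on $q$ that appear in the statement.
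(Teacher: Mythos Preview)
The easy direction is fine and matches the paper. The hard direction has a genuine gap.

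Your Cauchy--Schwarz step $|f(y)-f_B|\lesssim r^{\alpha}\Phi(y,r)$ is sharp in exactly the wrong way: the factor $r^{\alpha}$ cancels the $r^{-\alpha}$ in $G^{R_1}_{\alpha,q}$, so after Fefferman--Stein (even granting it in this locally doubling, nonunimodular setting) you are left with
\[
\int_0^{R_1}\Phi(y,r)^2\,\frac{\di r}{r}
=\int_0^{8R_1}\bigl[s^{-\alpha}\Omega_f(y,s)\bigr]^2\log\frac{8R_1}{s}\,\frac{\di s}{s},
\]
and the logarithm is \emph{not} dominated by $(G^{8R_1}_{\alpha}f(y))^2$. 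Thus the ``remedy'' you describe does not remove the divergence you correctly identified in the crude bound. You can patch this by running Cauchy--Schwarz with an exponent $\alpha'<\alpha$, leaving a factor $r^{2(\alpha'-\alpha)}$ that makes the $\di r/r$ integral converge; but the Fefferman--Stein inequality you invoke then needs inner exponent $2/q>1$ and outer exponent $p/q>1$, so at best you obtain the result for $q<\min(2,p)$. That is strictly weaker than the stated thresholds $q<\tfrac{dp}{d-\alpha p}$, $q<\tfrac{2d}{d-2\alpha}$, and $q=\infty$, and in particular says nothing in case (iii).

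The missing mechanism is a fractional (smoothing) maximal operator, not the ordinary one. The paper, following \cite{CRTN}, first replaces $\Omega_f(y,s)$ by its average over $B(y,2s)$ and rewrites the bound from Lemma~\ref{Omega}(iv) as
\[
|f(y)-f_B|\;\lesssim\;\delta(y_0)^{\beta}\int_0^{8r}V(s)^{-\beta}\,M^1_{\beta}\bigl(\Omega_f(\cdot,s)\chi_{3B}\bigr)(y)\,\frac{\di s}{s},
\]
where $M^1_{\beta}$ is the modified local maximal function \eqref{MRbeta} and $0\le\beta<\alpha/d$. The $L^{p_0}\!\to L^q$ boundedness of $M^1_{\beta}$ (with $\tfrac1q=\tfrac1{p_0}-\beta$) is what produces the Sobolev exponents when one takes the $L^q$ average over $B$; one obtains
\[
\Omega^{(q)}_f(x,r)\;\lesssim\;V(r)^{\beta}\int_0^{8r}V(s)^{-\beta}\,M^3\bigl(|\Omega_f(\cdot,s)|^{p_0}\bigr)^{1/p_0}(x)\,\frac{\di s}{s},
\]
and the weight $V(r)^{\beta}V(s)^{-\beta}\approx (r/s)^{d\beta}$ with $d\beta<\alpha$ provides precisely the slack needed for a Hardy-type inequality in the $r$-integration (details as in \cite[p.~318]{CRTN}). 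Your outline never introduces this fractional smoothing, which is why the appearance of the Sobolev thresholds remains a heuristic rather than an argument.
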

\begin{proof}
In view of Lemma \ref{R1R2} we can assume $R_1=R_2=R$. We shall prove
(i). The proofs of statements (ii-iii) are similar and we
omit them. 

By Lemma \ref{Omega}(i) it follows that for every $p\in (1,\infty)$ and $q>1$, $\|G^{R}_{\alpha}f\|_p\lesssim \|G^{R}_{\alpha,q}f\|_p$.  

For every ball $B'=B(y',s)$ with $s<1$, and every $y\in B'$ we have that
$$
\int_{B'}|f-f_{B'}|\di\rho\leq\int_{B'}\Omega_f(z,s)\di\rho(z)\leq \int_{B(y,2s)}\Omega_f(z,s)\di \rho(z)\,,
$$ 
so that by the local doubling property  
$$
\frac{1}{\rho(B')}\int_{B'}|f-f_{B'}|\di\rho\lesssim \frac{1}{\rho(B(y,2s))}\int_{B(y,2s)}\Omega_f(z,s)\di \rho(z)\,.
$$
It follows that
$$
\Omega_f(y,s)\lesssim \frac{1}{\rho(B(y,2s))}\int_{B(y,2s)}\Omega_f(z,s)\di \rho(z)\,.
$$
By Lemma \ref{Omega}(iv) for every ball $B=B(y_0,r)$, $r<1$, for almost every $y\in B$ and for every $\beta\geq 0$
$$
\begin{aligned}
|f(y)-f_B|&\lesssim \int_0^{8r} \frac{1}{\rho(B(y,2s))}\int_{B(y,2s)}\Omega_f(z,s)\di \rho(z)\frac{\di s}{s}\\
&= \int_0^{8r} \frac{\rho(B(y,2s))^{-\beta}}{\rho(B(y,2s))^{1-\beta}}\int_{B(y,2s)}\Omega_f(z,s) \chi_{3B}(z)\di \rho(z)\frac{\di s}{s}\\
&\lesssim \int_0^{8r} [\delta^{-1}(y)V(2s)]^{-\beta}M^1_{\beta}\big( \Omega_f(\cdot,s) \chi_{3B}\big)(y)\frac{\di s}{s}\\
&\lesssim \delta(y_0)^{\beta}\,\int_0^{8r}  V(s)^{-\beta}M^1_{\beta}\big( \Omega_f(\cdot,s) \chi_{3B}\big)(y)\frac{\di s}{s}\,,
\end{aligned}
$$
where we applied again the local doubling property  and where
$M^1_{\beta}$ is the modified local maximal function defined in
\eqref{MRbeta}. Take now $p\in (1,2]$, $\alpha\leq \frac{d}{p}$,
$q<\frac{dp}{d-p\alpha}$. We choose $p_0<p$ and $0\leq \beta
<\frac{\alpha}{d}\leq \frac{1}{p}<1$ such that
$\frac{1}{q}=\frac{1}{p_0}-\beta$. Then $M^1_{\beta}$ is bounded from
$L^{p_0}$ to $L^q$. This implies that 
$$
\begin{aligned}
& \frac{1}{\rho(B)^{1/q}}\|f-f_B\|_{L^q(B)} \\
&\lesssim \frac{1}{\rho(B)^{1/q}}  \delta(y_0)^{\beta}\,\int_0^{8r}  V(s)^{-\beta}  \| \Omega_f(\cdot,s)  \|_{L^{p_0}(3B)}\frac{\di s}{s}\\
&\leq  \frac{1}{\rho(B)^{1/q}}  \delta(y_0)^{\beta}\,\rho(3B)^{\frac{1}{p_0}} \,\int_0^{8r}  V(s)^{-\beta} \Big(\frac{1}{\rho(3B)} \int_{3B}|\Omega_f(z,s)|^{p_0}\di\rho(z)  \Big)^{\frac{1}{p_0}}\frac{\di s}{s}\\
&= \delta(y_0)^{\frac{1}{q} +\beta  -\frac{1}{p_0}}   V({r})^{-\frac{1}{q}+\frac{1}{p_0}}    \int_0^{8r}  V(s)^{-\beta}  M^3(|\Omega_f(\cdot,s)|^{p_0})^{\frac{1}{p_0}}(x)\frac{\di s}{s}\,. 
\end{aligned}
$$
Since $\frac{1}{q}=\frac{1}{p_0}-\beta$, we get
$$
\Omega_f^{(q)}(x,r)\lesssim V({r})^{\beta} \int_0^{8r}  V(s)^{-\beta}  M^3(|\Omega_f(\cdot,s)|^{p_0})^{\frac{1}{p_0}}(x)\frac{\di s}{s}\,,
$$
and  
$$
G^R_{\alpha,q}f(x)\leq \Big( \int_0^Rr^{-2\alpha}V({r})^{2\beta} \Big[\int_0^{8r}  V(s)^{-\beta}  M^3(|\Omega_f(\cdot,s)|^{p_0})^{\frac{1}{p_0}}\frac{\di s}{s}\Big]^2 \frac{\di r}{r}\Big)^{1/2}\,.
$$
Now arguing as in \cite[p.318]{CRTN} the statement (i) follows. 
\end{proof}

We are now ready to prove the representation formula of the Sobolev norm involving the functional $D^{{\rm loc}}_{\alpha}$.

 \proof[Proof of Theorem \ref{Salphaloc} (ii)]
 
 \smallskip
 
 { \emph{STEP I}.} We shall prove that  
$$
\|S^{{\rm loc}}_{\alpha}f\|_p\lesssim \|D^{{\rm loc}}_{\alpha}f\|_p \,.
$$
Indeed, by \eqref{pallepiccole} for every $x\in G$
$$
\begin{aligned}
S^{{\rm loc}}_{\alpha}f(x)^2&= \int_0^1   \frac{1}{u^{2\alpha}V(u)}\Big(  \int_{B_u}        |f(xy^{-1})-f(x)|       \di\rho (y) \Big)^2\frac{\di u}{u}\\
&\lesssim \int_0^1\frac{1}{u^{2\alpha}V(u)}      \int_{B_u}        |f(xy^{-1})-f(x)|^2         \di\rho (y)  \frac{\di u}{u}\\
&= \int_{|y|<1}   \Big( \int_{|y|}^1 \frac{1}{u^{2\alpha+d+1}}\di u   \Big) |f(xy^{-1})-f(x)|^2 \di\rho(y)\\
&\lesssim D^{{\rm loc}}_{\alpha}f(x)^2\,,
\end{aligned}
$$
so that $\|S^{{\rm loc}}_{\alpha}f\|_p\lesssim \|D^{{\rm loc}}_{\alpha}f\|_p$ and $\|f\|_{\alpha,p}\lesssim \|D^{{\rm loc}}_{\alpha}f\|_p+\|f\|_p$. 

\smallskip
{ \emph{STEP II}.} We shall prove that, for 
$p>2d/(d+2\alpha)$
$$
\|D^{{\rm loc}}_{\alpha}f\|_p\lesssim  \|S^{{\rm
    loc}}_{\alpha}f\|_p + \|f\|_p\,.  
$$  
By applying Proposition \ref{GR} and Theorem \ref{Salphaloc} (i) it is enough to prove that
\begin{equation}\label{StepIIa}
D_{\alpha}^{{\rm loc}}f(x)\lesssim G^{{\rm loc}}_{\alpha,2}f(x)\,,
\end{equation}
and 
\begin{equation}\label{StepIIb}
\|G^{{\rm loc}}_{\alpha}f\|_p\lesssim \|S^{{\rm loc}}_{\alpha}f\|_p+\|f\|_p\,.
\end{equation}
Indeed, we observe that $G^{{\rm loc}}_{\alpha,2} f(x)
\le G^{{\rm loc}}_{\alpha,\infty} f(x)$ 
and 
apply Proposition \ref{GR} (iii) if $\alpha>d/p$, and  Proposition
\ref{GR} (i-ii) with $q=2$ if $\alpha\le d/p$. In this latter case, we
need to assume $p>2d/(d+2\alpha)$. 
To prove \eqref{StepIIa} we argue as follows: 
$$
\begin{aligned}
D^{{\rm loc}}_{\alpha}f(x)^2&= \sum_{k=-\infty}^{-1}\int_{2^{k-1}\leq
  |y|<2^k} \frac{|f(xy^{-1})-f(x)|^2}{|y|^{2\alpha}V(|y|)}\di\rho(y)\\ 
&\lesssim \sum_{k=-\infty}^{-1}    \frac{1}{2^{2k\alpha}V(2^k)}   \int_{  |y|<2^k}  |f(xy^{-1})-f(x)|^2 \di\rho(y)\\
&\lesssim \int_0^1   \frac{1}{r^{2\alpha}V({r})}  \int_{  |y|<r}  |f(xy^{-1})-f(x)|^2 \di\rho(y) \frac{\di r}{r}\\
&\lesssim \int_0^1   \frac{1}{r^{2\alpha}V({r})}  \int_{  B(x,r)}  |f(z)-f(x)|^2 \di\lambda(z) \frac{\di r}{r}\\
&\lesssim \int_0^1   \frac{\delta(x)}{r^{2\alpha}V({r})}  \int_{  B(x,r)}  |f(z)-f_{B(x,r)}|^2 \di\rho(z) \frac{\di r}{r}\\
&\qquad +\int_0^1   \frac{1}{r^{2\alpha}V({r})}  \lambda(B(x,r))   |f(x)-f_{B(x,r)}|^2  \frac{\di r}{r}\,.
\end{aligned}
$$

We apply Lemma \ref{Omega} (iv) and the following version of Hardy's inequality: 
If $g\ge0$, $g\in L^1[0,R]$, $R>0$,  $1-p<\beta<1$ and $G(r)=\int_0^r
g(t) \di t$, then,
$$
\int_0^R \frac{1}{r^\beta} \Big( \frac{G(r)}{r} \Big)^p \di r
\lesssim  \int_0^R \frac{1}{r^\beta} g(r)^p \di r\,.
$$
We obtain
$$
\begin{aligned}
D^{{\rm loc}}_{\alpha}f(x)^2&\lesssim \int_0^1
\frac{1}{r^{2\alpha+1}}   [\Omega^2_f(x,r)]^2 {\di r}  +\int_0^1
\frac{1}{r^{2\alpha+1}}\Big[   \int_{0}^{8r}  \Omega_f^2(x,u)
\frac{\di u}{u}\Big]^2\di r\\ 
&\leq G^1_{\alpha,2}f(x)^2+\int_0^8   \frac{1}{u^{2\alpha+1}}
[\Omega_f^2(x,u)]^2\di u\\ 
&\lesssim G^8_{\alpha,2}f(x)^2\,.
\end{aligned}
$$
We shall now prove \eqref{StepIIb}. For every $B(c_B,r)$, with $r\in
(0,1]$,  $c_B\in G$  and $x\in B$ 
$$
\begin{aligned}
\int_B|f(y)-f_B|\di\rho(y)&\leq \int_B|f(y)-f(x)|\di\rho(y)+ \int_B|f(x)-f_B|\di\rho(y)\\
&\leq 2 \int_B|f(y)-f(x)|\di\rho(y)\,.
\end{aligned}
$$
Using the fact that $\rho(B(c_B,r))=\delta(c_B)^{-1}V({r})\approx \delta(x)^{-1}V({r})$, we deduce that
$$
\Omega_f(x,r)\lesssim \frac{2}{\delta(x)^{-1}V({r})}\int_{B(x,2r)} |f-f(x)|\di\rho\,,
$$
so that
$$
\begin{aligned}
\int_0^1 r^{-2\alpha-1}[\Omega_f(x,r)]^2\di r
&\lesssim  \int_0^1
r^{-2\alpha} \Big[ \delta(x) V({r})^{-1}\int_{B(x,2r)}
|f(y)-f(x)|\di\rho(y) \Big]^2\frac{\di r}{r}\\ 
&\lesssim \int_0^1 r^{-2\alpha} \Big[ \delta(x)
V({r})^{-1}\int_{|w|<2r} |f(xw)-f(x)|
\delta^{-1}(xw)\di\lambda(w)\Big]^2\frac{\di r}{r}\\ 
&= \int_0^1 r^{-2\alpha} \Big[  V({r})^{-1}\int_{|w|<2r} |f(xw)-f(x)|
\di\rho(w)\Big]^2\frac{\di r}{r}\\ 
&= \int_0^1 r^{-2\alpha} \Big[  V({r})^{-1}\int_{|w|<2r}
|f(xw^{-1})-f(x)| \delta(w)\di\rho(w)\Big]^2\frac{\di r}{r}\\ 
&\lesssim [S^2_{\alpha}f(x)]^2\,,
\end{aligned}
$$
where we used the fact that the modular function is bounded on $B_{2r}$. It follows that
$$
\|G^{{\rm loc}}_{\alpha}f\|_p\lesssim \|S^2_{\alpha}f\|_p\lesssim \|S^{{\rm loc}}_{\alpha}f\|_p+\|f\|_p\,,
$$
as required in \eqref{StepIIb}. This concludes the proof. 

\qed

 \medskip

 \section{Proof of Theorem \ref{t: productlemma2}} \label{secproof}

We first prove Theorem \ref{t: productlemma2} for $\alpha\in
[0,1)$. The case when $\alpha=0$ is trivial. Suppose that $\alpha\in
(0,1)$, $p_1,q_2\in (1,\infty]$ and $r,p_2,q_1\in (1,\infty)$ are such
that $\frac1r=\frac{1}{p_i}+\frac{1}{q_i}$, $i=1,2$. Take $f\in
L^{p_1}\cap L^{p_2}_{\alpha}$ and $g\in L^{q_2}\cap
L^{q_1}_{\alpha}$. According to Theorem \ref{Salphaloc}  
$$
\|fg\|_{\alpha,r}\lesssim \|S^{{\rm loc}}_{\alpha}(fg)\|_r+\|fg\|_r\,.
$$  
By H\"older's inequality one has
$$
\|fg\|_r\leq \|f\|_{p_1}\|g\|_{q_1}\leq \|f\|_{p_1}\|g\|_{\alpha,q_1}\,,
$$
and
$$
\|fg\|_r\leq \|f\|_{p_2}\|g\|_{q_2}\leq \|f\|_{\alpha,p_2}\|g\|_{q_2}\,.
$$
Moreover,
$$
\begin{aligned}
S^{{\rm loc}}_{\alpha}(fg)(x)&\leq \Big(\int_0^1\Big[ \frac{1}{u^{\alpha}
  V(u)}\int_{|y|<u}|(fg)(xy^{-1})-g(xy^{-1})f(x)|\di\rho(y)
\Big]^2\frac{\di u}{u} \Big)^{1/2}\\ 
&\qquad \qquad +\Big(\int_0^1\Big[ \frac{1}{u^{\alpha}
  V(u)}\int_{|y|<u}|f(x)g(xy^{-1})-(fg)(x)|\di\rho(y) \Big]^2\frac{\di
  u}{u} \Big)^{1/2}\\ 
&=I(x)+I\!I(x)\,.
\end{aligned}
$$
Obviously,
$$
I\!I(x)= |f(x)S^{{\rm loc}}_{\alpha}g(x)|\,,
$$
so that by H\"older's inequality
$$
\|I\!I\|_r\leq \|f\|_{p_1}\|S^{{\rm loc}}_{\alpha}g\|_{q_1}
\lesssim \|f\|_{p_1} \| g\|_{\alpha, q_1}\,.
$$
To estimate $I(x)$ we choose $p,q>1$ such that $q=p'$, $1<p<q_2$ and
$p_2>\frac{qd}{d+q\alpha}$. By H\"older's inequality we obtain 
$$
\begin{aligned}
I(x)&\lesssim  \Big(\int_0^1  \Big[ \frac{1}{V(u)}
\int_{|y|<u}|g(xy^{-1})|^p\di\rho(y) \Big]^{2/p}  \\ 
& \qquad\qquad\times \Big[ \frac{1}{V(u)}  \int_{|y|<u}|
f(xy^{-1})-f(x)  |^q\di\rho(y) \Big]^{2/q}\frac{\di u}{u^{2\alpha+1}}
\Big)^{1/2}\\ 
&=  \Big(\int_0^1  \Big[ \frac{1}{V(u)}
\int_{|y|<u}|g(xy)|^p\di\lambda(y) \Big]^{2/p}  \Big[ \frac{1}{V(u)}
\int_{|y|<u}|     f(xy)-f(x)  |^q\di\lambda(y) \Big]^{2/q}\frac{\di
  u}{u^{2\alpha+1}} \Big)^{1/2}\\ 
&=  \Big(\int_0^1  \Big[ \frac{1}{V(u)}
\int_{B(x,u)}|g(z)|^p\di\lambda(z) \Big]^{2/p}  \Big[ \frac{1}{V(u)}
\int_{B(x,u)}|     f(z)-f(x)  |^q\di\lambda(z) \Big]^{2/q}\frac{\di
  u}{u^{2\alpha+1}} \Big)^{1/2}\\ 
&\lesssim   \Big(\int_0^1  \Big[ \frac{\delta(x)}{V(u)}
\int_{B(x,u)}|g(z)|^p\di\rho(z) \Big]^{2/p}  \Big[
\frac{\delta(x)}{V(u)}  \int_{B(x,u)}|     f(z)-f(x)  |^q\di\rho(z)
\Big]^{2/q}\frac{\di u}{u^{2\alpha+1}} \Big)^{1/2}\\ 
&\lesssim \Big(M^1(|g|^p)(x)\Big)^{1/p}\,  \Big(\int_0^1  \Big[ \frac{\delta(x)}{V(u)}
\int_{B(x,u)}|     f(z)-f(x)  |^q\di\rho(z) \Big]^{2/q}\frac{\di
  u}{u^{2\alpha+1}} \Big)^{1/2}\\ 
&\leq  \Big(M^1(|g|^p)(x)\Big)^{1/p}\,G^8_{\alpha,q}f(x)\,.
\end{aligned}
$$
The last inequality follows as in \cite[p. 322-323]{CRTN}. 

Therefore, the boundedness properties of the local maximal function,
Proposition \ref{GR},  formula \eqref{StepIIb}, Theorem \ref{Salphaloc} 
(i), and H\"older's
inequality imply that 
$$
\|I\|_r\leq
\Big\|\Big(M^1(|g|^p)(x)\Big)^{1/p}\Big\|_{q_2} 
\big( \|G^{{\rm loc}}_{\alpha,q}f\|_{p_2} + \|f\|_{p_2} \big) 
\lesssim
   \|g\|_{q_2}\,\|
f\|_{\alpha,p_2}\,. 
$$
In conclusion,
$$
\|S^{{\rm loc}}_{\alpha}(fg)\|_r\lesssim  \|f\|_{p_1}\,\| g\|_{\alpha,q_1}+  \|g\|_{q_2}\,\| f\|_{\alpha,p_2}\,,
$$
as required to prove Theorem \ref{t: productlemma2} for $\alpha\in (0,1]$. 
\smallskip

We now prove Theorem \ref{t: productlemma2}  for $\alpha>1$. To do so,
we argue by induction. Suppose that the theorem holds for a certain
$\alpha>0$: we shall show that it holds for
$\beta=\alpha+1$. According to Proposition \ref{sobolevspaces}(ii)  
$$
\|fg\|_{\beta,r}\approx \|fg\|_{\alpha,r}+\sum_{i=1}^q\|X_i(fg)\|_{\alpha,r}\,.
$$
On the one hand, by the inductive hypothesis 
$$
\|fg\|_{\alpha,r}\lesssim
\|f\|_{p_1}\|g\|_{\alpha,q_1}+\|f\|_{\alpha,p_2}\|g\|_{q_2}\leq
\|f\|_{p_1}\|g\|_{\beta,q_1}+\|f\|_{\beta,p_2}\|g\|_{q_2}\,. 
$$
On the other hand, for every $i=1,\dots,q$,
$$
\|X_i(fg)\|_{\alpha,r}\leq \|(X_if)\,g\|_{\alpha,r}+\|f(X_ig)\|_{\alpha,r}\,.
$$
Using the inductive hypothesis, 
$$
\|f(X_ig)\|_{\alpha,r}\lesssim \|f\|_{\alpha,p_3}\|X_ig\|_{q_3}+\|X_ig\|_{\alpha,q_1}\|f\|_{p_1}\leq \|f\|_{\alpha,p_3}\|X_ig\|_{q_3}+\|g\|_{\beta,q_1}\|f\|_{p_1}  \,,
$$
where $\frac{1}{p_3}=\frac{\alpha}{\beta p_2}+\frac{1}{\beta p_1}$ and
$\frac{1}{q_3}=\frac{\alpha}{\beta q_2}+\frac{1}{\beta q_1}$. One
checks that $\frac{1}{p_3}+\frac{1}{q_3}=\frac{1}{r}$. By Proposition
\ref{int}  
$$
\|f\|_{\alpha,p_3}\lesssim \|f\|^{\alpha/{\beta}}_{\beta,p_2}\|f\|_{p_1}^{1-\alpha/{\beta}}\,,
$$
and by Theorem \ref{local-Riesz-trans-Lp} there exists $c$ sufficiently large such that  
$$
\|X_ig\|_{q_3}\lesssim \|(cI+\Delta)^{1/2}g\|_{q_3}\sim \|g\|_{1,q_3} \lesssim
\|g\|_{\beta,q_1}^{1/{\beta}}\|g\|_{q_2}^{1-1/{\beta}}\,,
$$
where we applied Corollary \ref{sobolevspaces}(iii) and Proposition
\ref{int}. It follows that
$$
\|f\|_{\alpha,p_3}\|X_ig\|_{q_3}\lesssim \big(
\|f\|_{\beta,p_2}\|g\|_{q_2}  \big)^{\alpha/{\beta}} \big(
\|g\|_{\beta,q_1}\|f\|_{p_1}  \big)^{1/{\beta}}\lesssim
\|f\|_{\beta,p_2}\|g\|_{q_2}+\|f\|_{p_1}\|g\|_{\beta,q_1}  \,.
$$
In conclusion,
$$
\|f(X_ig)\|_{\alpha,r}\lesssim \|f\|_{\beta,p_2}\|g\|_{q_2}+\|f\|_{p_1}\|g\|_{\beta,q_1}   \,,
$$
as required. The term $\|g (X_if)\|_{\alpha,r}$ can be treated in the similar way, so
that the proof of the induction argument is complete and the theorem
is proved for every $\alpha\geq 0$.  \medskip
\qed

\section{Final remarks}  \label{secfinal} 
As we mentioned in the Introduction, we shall apply our main result Theorem
\ref{t: productlemma} to the problem of well-posedness and regularity
for solutions of the Cauchy problem for certain nonlinear differential
equations involving the subLaplacian $\Delta$ on $G$, such as the heat
and Schr\"odinger equations, see \cite{PV}.
 
We would like to point out that our results, if on one hand solve the
question of when $L^p_\alpha\cap L^\infty$ is an algebra on a generic
Lie group, on the other hand leave open several interesting questions.

First of all, given the (counter)example in Theorem
\ref{contro-esempio}, it is certainly worth investigating  the
analogous of the results in the present paper
 in the case of the weighted Lebesgue and Sobolev spaces
 $L^p_\alpha(\delta^\gamma)$.  This kind of weights arise naturally
 when considering the Sobolev embedding theorem (see \cite{V2}). 
Moreover,   
the spaces $L^p_\alpha(\delta^\gamma)$
 might turn out to be the correct spaces 
 for the well-posedness of some Cauchy problems 
--- 
see \cite{APV}, where
 Strichartz estimates involving such weighted Lebesgue spaces are
 proved for the Schr\"odinger equation 
 associated with $\Delta$ on a class of Lie groups of exponential growth.

 Finally, we 
mention that on a generic Lie group $G$, 
the $L^p$-boundedness of the Riesz transforms $\cR_j$, $j=1,\dots,q$, 
is not known, while it is known that higher order Riesz transforms
might be unbounded (see the Introduction). These problems are
connected with the study of the analogue of Theorems \ref{t:
  productlemma} and  
\ref{t:
  productlemma2}  
   for the homogeneous Sobolev spaces in our setting, which would be
   another interesting problem to investigate in the context of
   nonunimodular Lie groups.  
\mbox{ \ }
\bigskip

\end{document}